\let\OLDthebibliography\thebibliography
\renewcommand\thebibliography[1]{
  \OLDthebibliography{#1}
  \setlength{\parskip}{0pt}
  \setlength{\itemsep}{0pt plus 0.3ex}
}
\patchcmd{\chapter}{\if@openright\cleardoublepage\else\clearpage\fi}{}{}{}
\titleformat{\chapter}[hang] 
{\normalfont\huge\bfseries}{\chaptertitlename\ \thechapter:}{1ex}{}
\renewcommand*{\backref}[1]{}
\renewcommand*{\backrefalt}[4]{%
    \ifcase #1%
          \or (p.~#2.)%
          \else (p.~#2.)%
    \fi%
    }
\newcommand{\e}{\varepsilon}
\newcommand{\R}{\mathbb R}
\newcommand{\p}{\mathbb P}
\newcommand{\N}{\mathbb N}
\newcommand{\E}{\mathbb E}
\renewcommand{\L}{\mathcal L}
\newcommand{\G}{\mathbb G}
\newcommand{\F}{\operatorname{F}}
\newcommand{\FL}{\operatorname{F_L}}
\newcommand{\T}{\mathbb{T}}
\newcommand{\C}{\mathbb{C}}
\renewcommand{\det}{\operatorname{d e t}}
\newcommand{\tr}{\operatorname{T r}}
\newcommand{\Mat}{\operatorname{Ma t}}                 
\newcommand{\dkl}{\operatorname{D_{K L}}}
\renewcommand{\d}{\: \mathrm{ d }}
\newcommand{\I}{\operatorname{I }_d}
\newcommand{\rmx}{\mathrm{x}}
\newcommand{\rmz}{\mathrm z}
\newcommand{\rmw}{\mathrm w}
\newcommand{\rmy}{\mathrm y}
\newcommand{\bw}{\mathbf w}
\newcommand{\bmu}{\boldsymbol{\mu}}
\newcommand{\bx}{\mathbf x} 
\newcommand{\by}{\mathbf y} 
\newcommand{\bz}{\mathbf z}
\newcommand{\bSigma}{\mathbf \Sigma}
\newcommand{\bvx}{\bar{\mathbf{ x}}}
\newcommand{\lx}{\bar{ x}} 
\newcommand{\vrw}{{\mathbf w}} 
\newcommand{\vrx}{\bx} 
\theoremstyle{plain}
\newtheorem{theorem}{Theorem}[section]
\newtheorem{lemma}[theorem]{Lemma}
\newtheorem{proposition}[theorem]{Proposition}
\theoremstyle{definition}
\newtheorem{definition}[theorem]{Definition}
\newtheorem{example}[theorem]{Example}
\theoremstyle{remark}
\newtheorem{remark}[theorem]{Remark}
\newtheorem{assumption}[theorem]{Assumption}
\newtheorem{approximation}[theorem]{Approximation}
\newtheorem{fact}[theorem]{Fact}
\newenvironment{claim}[1]{\par\noindent\emph{Claim:}\space#1}{}
\newenvironment{proofclaim}[1]{\par\noindent\textit{Proof of claim.}\space#1}{\hfill $\blacksquare$}
\title{A theory of generalised coordinates for\\stochastic differential equations}
\author{Lancelot Da Costa$^{1,2,3,}$\thanks{Equal contribution. Correspondence: \textit{lance.dacosta@verses.ai}.}\:, Nathaël Da Costa$^{4,*}$, Conor Heins$^{1}$, Johan Medrano$^{3}$,\\
Grigorios A. Pavliotis$^{2}$, Thomas Parr$^{5}$, Ajith Anil Meera$^{6}$,
Karl Friston$^{1,3}$
}
\date{\small\it
    $^1$VERSES AI Research Lab, Los Angeles, CA 90016, USA\\
    $^2$Department of Mathematics, Imperial College London, London, SW7 2AZ, UK\\
    $^3$Wellcome Centre for Human Neuroimaging, University College London, London, WC1N 3AR, UK\\
    $^4$Tübingen AI Center, University of Tübingen, Tübingen, 72076, Germany \\
    $^5$Nuffield Department of Clinical Neurosciences, University of Oxford, Oxford, UK\\
    $^{6}$Donders Institute, Radboud University, Nijmegen, 6500HE, The Netherlands
}
\begin{document}

\maketitle

\begin{abstract}
Stochastic differential equations are ubiquitous modelling tools in applied mathematics and the sciences.
In most modelling scenarios, random fluctuations driving dynamics or motion have some non-trivial temporal correlation structure, which renders the SDE non-Markovian; a phenomenon commonly known as ``coloured'' noise.
Thus, an important objective is to develop effective tools for mathematically and numerically studying (possibly non-Markovian) SDEs.

In this report, we formalise a mathematical theory for analysing and numerically studying SDEs based on so-called `generalised coordinates of motion'.
Like the theory of rough paths, we analyse SDEs pathwise for any given realisation of the noise, not solely probabilistically.
Like the established theory of Markovian realisation, we realise non-Markovian SDEs as a Markov process in an extended space.
Unlike the established theory of Markovian realisation however, the Markovian realisations here are accurate on short timescales and may be exact globally in time, when flows and fluctuations are analytic.

This theory is exact for SDEs with analytic flows and fluctuations, and is approximate when flows and fluctuations are differentiable.
It provides useful analysis tools, which we employ to solve linear SDEs with analytic fluctuations.
It may also be useful for studying rougher SDEs, as these may be identified as the limit of smoother ones.

This theory supplies effective, computationally straightforward methods for simulation, filtering and control of SDEs; among others, we re-derive generalised Bayesian filtering, a state-of-the-art method for time-series analysis.

Looking forward, this report suggests that generalised coordinates have far-reaching applications throughout stochastic differential equations.

\end{abstract}

\textit{Keywords:} Non-Markovian, pathwise analysis, coloured noise, analytic, numerical methods, numerical simulation, generalised filtering.\\
\textit{MSC2020 subject classifications: }Primary 60H10; secondary 60H30, 34F05, 60G15, 65C30, 93E11.\\


\tableofcontents

\section*{Introduction}

\subsection*{Stochastic differential equations}
Stochastic differential equations are ubiquitous tools in applied mathematics, physics and the sciences.
Physically speaking, a stochastic differential equation 
\begin{equation}
\label{eq: intro SDE}
    \frac{d}{dt} x_t = f(x_t)+w_t,
\end{equation}
expresses the fact that a system $x_t$ exhibits slowly varying coarse behaviour described by a vector field $f$ superimposed with rapid random fluctuating motions $w_t$ and inherits from an adiabatic approximation (i.e.~a time-scale separation) between fast and slow degrees of freedom \cite{kawasakiSimpleDerivationsGeneralized1973a}. 
In modelling, one can motivate a stochastic differential equation as describing how a system evolves when subjected to a combination of deterministic (\textit{known}) and random (\textit{unknown}) fluctuations.
Stochastic differential equations are ubiquitous in physics---specifically in statistical, quantum or classical mechanics---as well as in many branches of applied mathematics, as a model of random dynamical systems.

\subsection*{White noise SDEs}

Despite this intuitive view, the mathematical development of stochastic differential equations has been very challenging and a major endeavour of mathematics and physics since the early 20th century. The first mathematical theories of stochastic differential equations, due to Ito in the 1940s and later Stratonovich, were developed for the rigorous study of Brownian motion and related (Markovian) diffusion equations, which stand for `white' noise fluctuations $w_t$ in \eqref{eq: intro SDE}, and which can be motivated as a model of the motion of particles subject to thermal random fluctuations and other forces \cite{rey-belletOpenClassicalSystems2006,einsteinUberMolekularkinetischenTheorie1905,pavliotisStochasticProcessesApplications2014}.

\subsection*{Colored noise SDEs}

Beyond thermal fluctuations, random fluctuations in physics and in complex systems modelling are usually ``coloured'': this is to say that fluctuations $w_t$ have some temporal correlation structure---think for instance of waves in the ocean and fluctuations in the wind. These coloured fluctuations make the solution to the SDE a non-Markovian process. With the limitations of white noise in mind, three main theories have been developed for analysing SDEs driven by coloured noise:

\begin{itemize}
    \item The theory of stochastic differential equations driven by semi-martingales \cite{heSemimartingaleTheoryStochastic1992}: a theory of stochastic integration with respect to general (semi-martingale) noise processes, that extends Ito and Stratonovich calculus.
    \item The modern theory of stochastic differential equations driven by rough paths \cite{frizMultidimensionalStochasticProcesses2010,frizCourseRoughPaths2020}, which analyses SDEs driven by general noise signals of possibly very low regularity. Contrasting with the other theories, this is an analytic theory in the sense that SDEs are analysed pathwise---that is for any given realisation of the noise---rather than simply probabilistically.
    \item The theory of Markovian realisation, which approximates non-Markovian processes by Markovian ones \cite{lindquistLinearStochasticSystems2015,mitterTheoryNonlinearStochastic1981,tayorNonlinearStochasticRealization1989}. As a primary use case, this theory allows one to approximate coloured noise driven SDEs by white noise driven SDEs evolving in an extended state-space~\cite[Ch. 8]{pavliotisStochasticProcessesApplications2014}, \cite{rey-belletOpenClassicalSystems2006}. In particular, this approximation can be carried out for systems of non-Markovian interacting particles and their mean-field limit~\cite{SGGPUV2019, DuongPavliotis2018}, as well as for coloured multiplicative noise~\cite{HanggiJung1995}. The resulting Markovian approximation is nevertheless quite challenging to analyse~\cite{GPGSUV2021} as its generator and Fokker-Planck operator are necessarily degenerate (usually hypoelliptic and hypocoercive)~\cite{OttobrePavliotis11}.
\end{itemize}

To set the stage for this work, we draw attention to the scope of the \textit{current} mathematical theory of Markovian realisation. As was noted by Stratonovich: ``The study of Markov processes is particularly appropriate, since effective mathematical methods are available for analysing them. However, a certain care must be taken in replacing an actual process by Markov process, since Markov processes have many special features, and, in particular, differ from the processes encountered in radio engineering by their lack of smoothness'' \cite[p122]{stratonovichTopicsTheoryRandom2014}.
It turns out that Markovian realisations furnish reasonable approximations to fluctuations over time-scales considerably larger than the correlation time; however, they fail at shorter timescales \cite[p122]{stratonovichTopicsTheoryRandom2014}. ``Thus the results obtained by applying the techniques of Markov process theory are valuable only to the extent to which they characterise just these 'large-scale' fluctuations. [...] 
However, any random process actually encountered in radio engineering is analytic, and all its derivatives are finite with probability 1. Therefore, we cannot describe an actual process within the framework of Markov process theory, and the more accurately we wish to approximate such a process by a Markov process, the more components the latter must have and the higher the order of the corresponding fluctuation equation must be.'' \cite[p123-125]{stratonovichTopicsTheoryRandom2014}.

\subsection*{A theory of generalised coordinates}

In this paper, we present an alternative theory of Markovian realisation for stochastic differential equations based on generalised coordinates of motion; hereafter, ``generalised coordinates''.

\textbf{Scope:} This is a theory for the analysis and numerical study of SDEs, which is exact for SDEs with analytic flows and fluctuations, and which is approximate when flows and fluctuations are differentiable. This theory may be applicable to rougher SDEs, as the Wong-Zakai theorems tell us that many of these can be expressed as limits of smoother SDEs. Like the theory of rough paths, we analyse SDEs pathwise for any given realisation of the noise, not solely probabilistically. Like the current theory of Markovian realisation we realise non-Markovian SDEs as a Markov process in an extended (possibly infinite dimensional) space. Unlike the established theory of Markovian realisation however, the Markovian realisations here are accurate on short timescales and may be exact globally in time (when flows and fluctuations are analytic). Crucially, this theory 
affords computationally straightforward methodologies for simulation, filtering and control of stochastic differential equations.

\textbf{The basic idea:} We can express the solution of a many times differentiable SDE by its Taylor expansion at any time point. (If the SDE's coefficients are analytic we can consider the Taylor series). We can then reformulate the SDE as a dynamically-evolving Taylor expansion. If we define the coefficients of these Taylor expansions to be our generalised coordinates, the SDE becomes a linear ordinary differential equation in the high-dimensional space of generalised coordinates. We can then analyse the behaviour of this simpler dynamical system which (depending on context) gives us useful approximations or exact analytical solutions to the original equation. The question throughout is: how far can you go with Taylor expansions?

\textbf{History:} The use of generalised coordinates to analyse physical systems is ubiquitous in physics. To the best of our knowledge, the specific approach presented here was originally applied to SDEs as the basis of computationally straightforward and robust filtering of smooth time-series---laying the foundations of the generalised filtering algorithm detailed later---specifically in the context of neuroimaging time-series obtained through functional magnetic resonance imaging \cite{fristonVariationalTreatmentDynamic2008,fristonGeneralisedFiltering2010}. This approach has since percolated into other fields including robotics and control, where it showed state-of-the-art performance for \textit{i)} radar tracking \cite{balajiBayesianStateEstimation2011}, \textit{ii)} filtering during a real quadrotor flight experiment \cite{bos2022free}, \textit{iii)} noise covariance estimation under coloured noise \cite{meera2023adaptive}, and \textit{iv)} system identification under coloured noise \cite{anil2021dynamic}. More recently, this approach was applied for analysing of SDEs in statistical physics, specifically in derivations of the free-energy principle \cite{fristonFreeEnergyPrinciple2023a,fristonPathIntegralsParticular2023}. In this paper we develop and formalise these ideas, and show that they have far-reaching applications throughout stochastic differential equations.

\textbf{Generalised filtering as a generic application:}
A key payoff of the theory of generalised coordinates is a Bayesian filtering scheme that handles non-Markovian SDEs with coloured noise gracefully, while remaining relatively lightweight and scalable. Indeed, classical filters generally appear to lack this joint property:
\begin{enumerate}
    \item Extended/Unscented Kalman filters (EKF/UKF) are derived under the assumption that the process and measurement noises are white \cite{Jazwinski1970}, so their error grows with temporal or serial noise correlations \cite{Farina1986,Julier2004,Brown2012} unless this temporal correlation structure is accounted for explicitly, e.g. through Markovian realisation, in which case however, the filter remains inaccurate on short timescales.
    \item Particle filters can, in principle, handle arbitrary noise, however they suffer from a severe curse of dimensionality: maintaining a non-degenerate ensemble essentially requires an \textit{exponentially} increasing number of particles in the effective state-space dimension \cite{Snyder2008,Bengtsson2008,Bickel2009,Rebeschini2015}.
\end{enumerate}
Generalised filtering sidesteps both issues. Generalised filtering can be thought of as a higher order version of extended Kalman filtering that was developed specifically for coloured noise. It \textit{i)} treats coloured noise natively and accurately, and \textit{ii)} replaces sampling with deterministic gradient flows on a variational free-energy, which reduces computational cost. Head-to-head studies in fMRI time-series analysis \cite{fristonGeneralisedFiltering2010}, radar tracking \cite{balajiBayesianStateEstimation2011} and quadrotor flight estimation \cite{bos2022free} suggest that generalised filtering attains or exceeds EKF-level accuracy under coloured noise while running one–to–two orders of magnitude faster than particle filters tuned for the same tasks. In \cref{sec: GF} we derive this filter rigorously and provide open-source code.

\subsection*{Structure of paper}

\cref{chap: 1} sets the stage by explaining generalised coordinates for stochastic differential equations. We see how to reformulate an SDE to a dynamical system in generalised coordinates, and how to go back; and how much information about the initial SDE is preserved when applying this transform. We briefly review the Wong-Zakai theorems to delineate the class of SDEs where generalised coordinates might usefully be applied.

In \cref{chap: 2}, we delve deeper by examining basic analysis tools and results for stochastic differential equations using generalised coordinates. We start by analysing the statistical structure of random fluctuations in generalised coordinates, and then focus on the case where fluctuations are stationary Gaussian processes with many times differentiable sample paths. Under these conditions, we derive a comprehensive analysis of the solution to linear SDEs. Finally, we formulate the Fokker-Planck equation and the path integral formulation in generalised coordinates.

\cref{chap: 3} develops numerical methods that make use of generalised coordinates. We start by numerical integration (``zigzag'') methods of SDEs that are accurate on short timescales. We then turn to an intriguing identity for numerically recovering the path of least action from the path integral formulation. Finally, we derive some new and existing methods for Bayesian filtering of time-series, variously known as generalised filtering.
All of these numerical methods are supplemented with illustrative simulations and freely available code.

In \cref{chap: 4}, we briefly discuss ways in which this theory and numerical methods could be further developed, and provide concluding remarks. We also discuss our main omission from \cref{chap: 3} which are methods for stochastic control using generalised coordinates. For reference, the \cref{app: notation} has a glossary of some frequently used notations.

\chapter{Fundaments of generalised coordinates}
\label{chap: 1}

\section{Realisation in generalised coordinates}

\subsection{The setup}

Consider a stochastic differential equation on $\R^d$ over some open time interval $\T$ (which, without loss of generality contains $t=0$)

\begin{equation}
\label{eq: SDE}
    \frac{d}{dt} x_t = f(x_t)+w_t, \quad x_0=z \in \R^d.
\end{equation}
The deterministic part is given by the \textit{drift} or \textit{flow}, a vector field $f : \R^d \to \R^d$, and the stochastic part is given by the \textit{random fluctuations} or \textit{noise}, a stochastic process $w: \T \times \Omega \to \R^d$. Here $\Omega$ denotes the sample space of the underlying probability space, made implicit throughout, so that we write $w_t \triangleq w(t, \cdot)$ for the random fluctuations at time $t$ (an $\R^d$-valued random variable). 

\begin{assumption}[Differentiability]
\label{as: main assumption}
    We assume
there exists $N \in \N \backslash \{0\}\cup \{\infty\}$ such that the flow $f$ and the noise sample paths $t \mapsto w_t$ are $(N-1)$-times continuously differentiable (i.e.,  $f \in C^{N-1}(\R^d,\R^d) $ and $ w(\cdot,\omega)\in C^{N-1}(\T,\R^d)$ for $\omega \in \Omega$ a.s.).
\end{assumption}


\subsection{Motivation
}

We can express the motion of derivatives of the SDE \eqref{eq: SDE}, simply by differentiating w.r.t. time 
\begin{equation}
\label{eq: SDE motion of derivatives}
    \begin{split}
        \frac d {dt}x_t &= f(x_t)+w_t
        \\
        \frac{d^2}{dt^2}x_t &= \frac d {dt}f(x_t)+\frac d {dt}w_t\\
                        &\vdots \\
        \frac{d^N}{dt^N} x_t &=\frac {d^{N-1}} {dt^{N-1}} f(x_t)+\frac {d^{N-1}} {dt^{N-1}} w_t\\   
    \end{split}
\end{equation}


Fixing an initial time $t$ in \eqref{eq: SDE motion of derivatives}---without loss of generality $t=0$---yields a system of algebraic equations for the serial derivatives of the solution up to order $N$ at this time.

\begin{equation}
\label{eq: algebraic equation derivatives}
    \begin{split}
        \frac d {dt}x_t\big|_{t=0} &= f(x_0)+w_0
        \\
        \frac{d^2}{dt^2}x_t\big|_{t=0} &= \frac d {dt}f(x_t)\big|_{t=0}+\frac d {dt}w_t\big|_{t=0}\\
                        &\vdots \\
        \frac{d^N}{dt^N} x_t\big|_{t=0} &=\frac {d^{N-1}} {dt^{N-1}} f(x_t)\big|_{t=0}+\frac {d^{N-1}} {dt^{N-1}} w_t\big|_{t=0}\\   
    \end{split}
\end{equation}
Assuming one can solve \eqref{eq: algebraic equation derivatives} 
we can approximately recover 
the solution trajectories of the SDE \eqref{eq: SDE motion of derivatives} from the serial derivatives via a Taylor expansion:

\begin{equation}\label{eq: taylor approximations}
\begin{split}
x_t &\approx \sum_{n=0}^{N} \frac{d^{n}}{dt^{n}}x_t\big|_{t=0} \frac{t^n}{n!}\\
        \frac{d}{dt}x_t &\approx \sum_{n=0}^{N-1} \frac{d^{n+1}}{dt^{n+1}}x_t\big|_{t=0} \frac{t^{n}}{n!}\\
         &\vdots \\
          \frac{d^N}{dt^N}x_t &\approx \frac{d^{N}}{dt^{N}}x_t\big|_{t=0}
\end{split}
\end{equation}

Provided that the approximation by the Taylor expansion is sufficiently accurate, the typically hard problem of computing (or inferring) solution trajectories of SDEs yields to the much easier problem of computing (or inferring) serial derivatives of solutions at the time origin. It turns an uncountable number of equations---one \eqref{eq: SDE motion of derivatives} for each $t$---into a finite, or countably infinite, system of equations for $ \frac d {dt}x_t\big|_{t=0}, \ldots, \frac{d^{N}}{dt^{N}} x_t\big|_{t=0}$.

\subsection{Generalised coordinates}


This paper introduces a toolset for the analytic and numerical study of SDEs in this fashion. This requires embedding the SDE in a space of larger dimension, with auxiliary variables, which can be interpreted---in a first instance---as the position, velocity, acceleration, and higher order motion of the process. The goal will be to reformulate the SDE as a simple dynamical system in this higher dimensional space.

\subsubsection{The space of generalised coordinates}

The space of generalised coordinates of base dimension $d$ and order $n \in \N \cup \{\infty\}$ is
\begin{equation}
\label{eq: space of generalised coordinates}
   \G^{d,n}\triangleq \left\{\rmz^{(:n)}\triangleq \rmz^{(0:n)}\triangleq
   \begin{pmatrix}
\rmz^{(0)} \\
\rmz^{(1)}  \\
 \vdots\\
 \rmz^{(n)}  
\end{pmatrix} \mid  \rmz^{(i)} \in \R^{d},\forall i=0,...,n \right\}\cong \R^{d(n+1)}
\end{equation}
We treat elements of $\G^{d,n}$ as column vectors in $\R^{d(n+1)}$, which are finite or countably infinite dimensional depending on whether $n$ is finite or infinite. 

\subsubsection{The generalised flow}

Reformulating \eqref{eq: algebraic equation derivatives} as an algebraic equation in generalised coordinates requires the following:

\begin{claim}
\label{claim: gen flow}
There exists functions $\mathrm{f}^{(n)}:  \G^{d,n}\to \R^d, n=0, \ldots, N-1$ such that for all $t \in \T$
    \begin{equation}
        \mathrm{f}^{(n)}\left(x_t, \frac{d}{dt}x_t, \ldots, \frac{d^n}{dt^n}x_t\right)\triangleq\frac{d^n}{dt^n}f(x_t).
    \end{equation}
\end{claim}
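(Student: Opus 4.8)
The plan is to build the functions $\mathrm{f}^{(n)}$ recursively, reading the recursion off the chain rule; this is exactly Faà di Bruno's formula, organised so that the $n$-th time-derivative of $f(x_t)$ becomes a function of $x_t, \frac{d}{dt}x_t, \ldots, \frac{d^n}{dt^n}x_t$ alone. Before starting I would record that a solution of \eqref{eq: SDE} with data in $C^{N-1}$ is automatically of class $C^{N}$, by the usual bootstrap on $\frac{d}{dt}x_t = f(x_t) + w_t$: if $x$ is $C^{k}$ with $k \le N-1$, then $t \mapsto f(x_t)$ is $C^{k}$ and $w$ is $C^{N-1}\subseteq C^{k}$, so $\frac{d}{dt}x$ is $C^{k}$ and hence $x$ is $C^{k+1}$. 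Consequently the derivatives $x_t^{(i)} \triangleq \frac{d^i}{dt^i} x_t$ used below exist for all $i \le N$, and the left-hand side $\frac{d^n}{dt^n} f(x_t)$ of the claimed identity is meaningful for $n \le N-1$.

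Next I would run the induction on $n$. For the base case set $\mathrm{f}^{(0)} \triangleq f \in C^{N-1}(\R^d,\R^d)$, for which the identity is immediate. For the inductive step, suppose for some $0 \le n \le N-2$ that a map $\mathrm{f}^{(n)} \in C^{N-1-n}(\G^{d,n},\R^d)$ has been constructed with $\mathrm{f}^{(n)}(x_t^{(0)}, \ldots, x_t^{(n)}) = \frac{d^n}{dt^n} f(x_t)$ for all $t$. Since $N-1-n \ge 1$, $\mathrm{f}^{(n)}$ is $C^1$, and differentiating $t \mapsto \mathrm{f}^{(n)}(x_t^{(0)}, \ldots, x_t^{(n)})$ via the chain rule (using that $\frac{d}{dt} x_t^{(i)} = x_t^{(i+1)}$) gives
\[
\frac{d^{n+1}}{dt^{n+1}} f(x_t) = \sum_{i=0}^{n} \partial_{\rmz^{(i)}} \mathrm{f}^{(n)}\!\left(x_t^{(0)}, \ldots, x_t^{(n)}\right) x_t^{(i+1)}.
\]
This motivates defining, for every $\rmz^{(:n+1)} \in \G^{d,n+1}$,
\[
\mathrm{f}^{(n+1)}\!\left(\rmz^{(:n+1)}\right) \triangleq \sum_{i=0}^{n} \partial_{\rmz^{(i)}} \mathrm{f}^{(n)}\!\left(\rmz^{(:n)}\right) \rmz^{(i+1)},
\]
which is a genuine map $\G^{d,n+1} \to \R^d$ (each summand is a $d\times d$ Jacobian block applied to a vector of $\R^d$), of class $C^{N-2-n} = C^{N-1-(n+1)}$, since each $\partial_{\rmz^{(i)}}\mathrm{f}^{(n)}$ is $C^{N-2-n}$ and the coordinate maps $\rmz^{(:n+1)}\mapsto\rmz^{(i+1)}$ are smooth. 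Substituting $\rmz^{(i)} = x_t^{(i)}$ and comparing with the previous display gives the required identity at order $n+1$. The recursion thus yields $\mathrm{f}^{(0)}, \ldots, \mathrm{f}^{(N-1)}$; when $N = \infty$ it continues for all $n \in \N$.

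The hard part here is essentially administrative: tracking regularity so that each $\mathrm{f}^{(n)}$ still has a derivative to spare (it is $C^{N-1-n}$, hence $C^1$ as long as $n \le N-2$), confirming that the range of validity matches the $n \le N-1$ of the statement, and using the preliminary bootstrap that the SDE solution is $C^{N}$ so that the time-derivatives on the left-hand side exist. Everything else is forced by the chain rule; note in particular that the recursion defines $\mathrm{f}^{(n)}$ on all of $\G^{d,n}$, not merely along a trajectory. As an alternative to the recursion one could write $\mathrm{f}^{(n)}$ in closed form through Faà di Bruno's formula, as a universal polynomial in the partial derivatives of $f$ up to order $n$ evaluated at $\rmz^{(0)}$ and in the coordinates $\rmz^{(1)}, \ldots, \rmz^{(n)}$, but the recursive presentation is cleaner and keeps the regularity accounting transparent.
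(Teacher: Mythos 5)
Your proposal is correct and follows essentially the same route as the paper: the paper also obtains the $\mathrm{f}^{(n)}$ by repeated application of the chain rule (writing out the first few orders explicitly and noting the general pattern à la Faà di Bruno), which is exactly your recursion. Your added bookkeeping—the bootstrap showing the solution is $C^{N}$ and the count $\mathrm{f}^{(n)}\in C^{N-1-n}$ ensuring each step of the recursion is licit—is a more careful rendering of the same argument rather than a different one.
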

Indeed, by the chain rule
\begin{equation}
\begin{split} 
\mathrm{f}^{(0)}\left(x_t\right)&= f(x_t), \quad
\mathrm{f}^{(1)}\left(x_t, \frac{d}{dt}x_t\right)=\nabla f(x_t)\frac{d}{dt}x_t \\
\mathrm{f}^{(2)}\left(x_t, \frac{d}{dt}x_t, \frac{d^2}{dt^2}x_t\right)&= \nabla f(x_t)\frac{d^2}{dt^2}x_t+\left(\frac{d}{dt}x_t\right)^\top\nabla^2 f(x_t)\frac{d}{dt}x_t \\
&\vdots
\end{split}
\end{equation}
where the remaining $\mathrm{f}^{(n)}$ can be obtained analytically through repeated applications of the chain rule on $f$.
Altogether, for $n\geq 1$
\begin{equation}
    \mathrm{f}^{(n)}\left(x_t, \frac{d}{dt}x_t, \ldots, \frac{d^n}{dt^n}x_t\right)= \nabla f(x_t)\frac{d^n}{dt^n}x_t+ \text{terms involving higher order derivatives of } f
\end{equation}
where these higher order terms\footnote{These higher order terms could be obtained explicitly using Faà di Bruno's formula.} vanish when $f$ is linear or $n=1$.

\begin{definition}[Generalised flow]
\label{def: gen flow}
    The generalised flow is the function $\mathbf f: \G^{d,N}\to \G^{d,N-1}$
    \begin{equation}
    \label{eq: def gen flow}
        \mathbf f(\rmz^{(:N)})\triangleq\begin{pmatrix}
        \mathrm{f}^{(0)}\left(\rmz^{(0)}\right)\\
         \mathrm{f}^{(1)}\left(\rmz^{(:1)}\right)\\
            \vdots\\
            \mathrm{f}^{(N-1)}\left(\rmz^{(:N-1)}\right)
        \end{pmatrix}
    \end{equation}
Note that the generalised flow is invariant w.r.t $\rmz^{(N)}$.
\end{definition}
This approximation will be convenient at various stages later on:
\begin{approximation}[Local linear approximation]
\label{ap: local lin approx}
We say that we operate under the \emph{local linear approximation} when we discard all contributions of derivatives of $f$ of order strictly higher than $1$.\footnote{Note that this is the same approximation that lies at the heart of the Extended Kalman Filter \cite{einicke2012smoothing}.}
\end{approximation}
Under the local linear approximation~\ref{ap: local lin approx} the generalised flow equals:

\begin{equation}
\label{eq: bold f local linear}
        \mathbf f(\rmz^{(:N)})=\begin{pmatrix}
        f\left(\rmz^{(0)}\right)\\
        \nabla f(\rmz^{(0)})\rmz^{(1)}
         \\
            \vdots\\
            \nabla f(\rmz^{(0)})\rmz^{(N-1)}
        \end{pmatrix}
\end{equation}

\subsection{Formulation in generalised coordinates}

We formulate a random dynamical system
\begin{equation}
    \mathbf x_t\triangleq \rmx^{(:N)}_t \triangleq  \rmx^{(0:N)}_t \triangleq \begin{pmatrix}
            \rmx^{(0)}_t  \\
              \rmx^{(1)}_t \\
             \vdots \\
            \rmx^{(N)}_t 
             \end{pmatrix}, \quad t\in \T
\end{equation}
on the space of generalised coordinates $\G^{d,N}$ that captures the essential dynamics of the original SDE in the sense that it is equivalent to the RHS of \eqref{eq: taylor approximations} with \eqref{eq: algebraic equation derivatives} as initial condition.

$\mathbf x_t$ is defined as the unique solution to the \textit{generalised Cauchy problem}

\begin{equation}
\label{eq: gen Cauchy problem}
    \begin{cases}
        \frac{d}{dt}\mathbf x_t= \mathbf D \mathbf x_t\\
        \mathbf D' \mathbf x_0 = \mathbf f(\mathbf x_0)+\mathbf w_0\\
            \rmx^{(0)}_0=z
    \end{cases}
\end{equation}
where
\begin{equation}
\label{eq: generalised variables for gen cauchy problem}
\begin{split}
       \mathbf D &\triangleq \underbrace{ \I\otimes
\overbrace{
\begin{pmatrix}
            0 &  1&  &&  \\
             & 0& 1 & &  \\
             &&0&\ddots& \\
             &&&\ddots&1\\
             &&&&0
        \end{pmatrix}}^{\Mat_{N+1,N+1}}}_{\Mat_{(N+1)d,(N+1)d}}, \quad \mathbf D'\triangleq\underbrace{ \I\otimes
\overbrace{\begin{pmatrix}
            0 &  1&  &&  \\
             & 0& 1 & &  \\
             &&\ddots&\ddots& \\
             &&&0&1
             \end{pmatrix}}^{\Mat_{N,N+1}}}_{\Mat_{Nd,(N+1)d}} \\ 
             \mathbf w_t&\triangleq  \mathrm w^{(:N-1)}_t \triangleq  \mathrm w^{(0:N-1)}_t \triangleq \begin{pmatrix}
            \mathrm w^{(0)}_t  \\
              \mathrm w^{(1)}_t \\
             \vdots \\
            \mathrm w^{(N-1)}_t 
             \end{pmatrix}\triangleq \begin{pmatrix}
             w_t  \\
              \frac {d} {dt} w_t \\
             \vdots \\
            \frac {d^{N-1}} {dt^{N-1}} w_t 
             \end{pmatrix}
\end{split}
\end{equation}
so that $\mathbf D \mathbf x_t=( \rmx^{(1:N)}_t\:\: 0 )^\top\in \G^{d,N}, \mathbf D' \mathbf x_0=\rmx^{(1:N)}_0 \in \G^{d,N-1}, \mathbf w_t\in \G^{d,N-1}$.
Note that the initial condition in \eqref{eq: gen Cauchy problem} is equivalent to \eqref{eq: algebraic equation derivatives} with $x_0=z$. Furthermore, the dynamics in \eqref{eq: gen Cauchy problem} are equivalent to saying that each coordinate $\rmx^{(n)}_t$ equals a Taylor polynomial centred at $t=0$ with coefficients $\rmx^{(n)}_0,\ldots, \rmx^{(N)}_0$.
Indeed, they are equivalent to
\begin{equation}
\label{eq: equation generalised process}
\begin{split}
\mathbf x_t&= \exp(t\mathbf D)\mathbf x_0\\
        \exp(t\mathbf D)&= \I\otimes\begin{pmatrix}
            1 &  t &  t^2/2&\ldots &  \frac {t^{N}}{N!}\\
             & 1& t &\ldots & \frac {t^{N-1}}{(N-1)!}  \\
             &&1&\ddots& \vdots\\
             &&&\ddots&t\\
             &&&&1
        \end{pmatrix}
\end{split}
\end{equation}
so that $\rmx_t^{(n)}=\sum_{i=0}^{N-n} \rmx^{(i+n)}_{0} \frac{t^i}{i!}$ as in the RHS of \eqref{eq: taylor approximations}. It is interesting to note that \eqref{eq: equation generalised process} can be read either as the solution to the ordinary differential equation in the first line of \eqref{eq: gen Cauchy problem} or as a Taylor approximation to \eqref{eq: SDE}. That the two are equivalent here is the key insight that underwrites use of generalised coordinates of motion.

\subsection{The full construct}


Consider $N=\infty$. The following is an equivalent condition to that given by the generalised process solution to \eqref{eq: gen Cauchy problem} to match the serial derivatives of the SDE \eqref{eq: SDE}
at some time $t$
\begin{equation}
\label{eq: equiv condition for exactness}
     \mathbf x_t \triangleq \begin{pmatrix}
            \rmx^{(0)}_t  \\
              \rmx^{(1)}_t \\
             \vdots  
             \end{pmatrix}= \begin{pmatrix}
         \frac d {dt}x_t 
        \\
        \frac{d^2}{dt^2}x_t \\
                        \vdots \\
     \end{pmatrix} \iff     \mathbf D' \mathbf x_t = \mathbf f(\mathbf x_t)+\mathbf w_t
\end{equation}
where the right equation in \eqref{eq: equiv condition for exactness} corresponds to \eqref{eq: SDE motion of derivatives}. This characterisation will be convenient later on.

\section{Faithfulness}

We now look at the faithfulness of the formulation in generalised coordinates w.r.t. the original SDE. That is, to what extent and over which time interval the generalised Cauchy problem \eqref{eq: gen Cauchy problem} accurately encodes the solutions of \eqref{eq: SDE}. Obviously this depends on the value of $N$ and how many times the solutions of \eqref{eq: SDE} are differentiable, or whether they are analytic. So we mainly focus on the conditions for the regularity of the solutions to \eqref{eq: SDE}.

Throughout this section, we consider the solution of the SDE \eqref{eq: SDE} pathwise, that is, we fix an element of the sample space $\omega \in \Omega$, so that the corresponding noise sample path is determined $t \mapsto w_{\omega,t}\triangleq w(t,\omega)$, and we consider the regularity of the corresponding solution $t \mapsto x_{\omega,t} \triangleq x(t,\omega)$. We can summarise our setup as an ordinary non-homogeneous differential equation:
\begin{equation}
\label{eq: non-homogeneous nonlinear differential equation}
\begin{split}
    \frac{d}{dt}x_{\omega,t}  = F_\omega(t,x_{\omega,t}), \quad x_{\omega,0}=z\in \R^d,
\end{split}
\end{equation}
where $F_\omega:  \T \times \R^d \to \R^d,F_\omega(t,x)\triangleq f(x) + w_{\omega,t}$. Furthermore, we will write $x_{\omega}\triangleq x(\cdot,\omega),w_{\omega}\triangleq w(\cdot,\omega)$ for solution and noise trajectories.

\begin{remark}[Faithfulness under the local linear approximation]
    At present, it is unclear what faithfulness guarantees can be obtained under local linear approximation~\ref{ap: local lin approx} for non-linear flows $f$. We will derive our results without this approximation, and return this important problem in \cref{sec: future directions}.
\end{remark}

\subsection{Many times differentiable solutions}

\subsubsection{Sufficient conditions}

\begin{theorem}
\label{thm: regularity many times diff ODE}
If $F_\omega$ is $C^{N-1}$ (i.e.~$f$ and $w_{\omega}$ are $C^{N-1}$) for $N \in \N, N\geq 2$ in a neighbourhood of $(0,z)$, the initial value problem \eqref{eq: non-homogeneous nonlinear differential equation} has a unique solution $x_{\omega}: (-R_\omega, R_\omega)\to \R^d$ for some radius $R_\omega >0$. Furthermore, $x_{\omega} \in C^{N}((-R_\omega, R_\omega))$. The same holds with $N=1$ provided $f$ is also locally Lipschitz.
\end{theorem}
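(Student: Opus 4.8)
The plan is to establish local existence and uniqueness by the standard Picard–Lindelöf argument, and then bootstrap the regularity of the solution from the regularity of $F_\omega$. First I would treat the $N=1$ case: since $f$ is locally Lipschitz and $w_\omega$ is continuous ($C^0$), the map $F_\omega(t,x) = f(x) + w_{\omega,t}$ is continuous in $(t,x)$ and locally Lipschitz in $x$ uniformly in $t$ near $(0,z)$, so the Picard–Lindelöf theorem gives a unique solution $x_\omega$ on some interval $(-R_\omega, R_\omega)$, and $x_\omega \in C^1$ simply because $\frac{d}{dt} x_{\omega,t} = F_\omega(t, x_{\omega,t})$ is a composition of continuous functions, hence continuous.

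For $N \geq 2$, note that $F_\omega \in C^{N-1}$ implies in particular that $F_\omega$ is $C^1$, hence locally Lipschitz in $x$, so the same Picard–Lindelöf argument yields a unique local solution $x_\omega \in C^1$. The regularity upgrade then proceeds by induction on $k$: I claim $x_\omega \in C^k$ for all $1 \le k \le N$. The base case $k=1$ is done. For the inductive step, suppose $x_\omega \in C^k$ with $k \le N-1$. Then $t \mapsto (t, x_{\omega,t})$ is $C^k$, and since $F_\omega \in C^{N-1} \subseteq C^k$, the composition $t \mapsto F_\omega(t, x_{\omega,t})$ is $C^k$; but this composition equals $\frac{d}{dt} x_{\omega,t}$, so $x_\omega'$ is $C^k$, i.e.\ $x_\omega \in C^{k+1}$. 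Running the induction up to $k = N-1$ gives $x_\omega \in C^N$ as claimed. Here I would use the elementary fact that a composition of $C^k$ maps is $C^k$ (chain rule applied $k$ times).

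The main subtlety — rather than a genuine obstacle — is bookkeeping the domain: the solution interval $(-R_\omega, R_\omega)$ is fixed by the $C^1$ theory (it depends only on the Lipschitz constant and bounds of $F_\omega$ on a neighbourhood of $(0,z)$), and the regularity bootstrap does not shrink it, since at each inductive step we are only differentiating an identity that already holds on the whole interval. One should also confirm that the neighbourhood of $(0,z)$ on which $F_\omega$ is $C^{N-1}$ can be taken to contain a tube around the graph of $x_\omega$ over a possibly smaller interval; this is automatic by continuity of $x_\omega$ and compactness, after shrinking $R_\omega$ if necessary. Beyond that, the argument is entirely classical and the only ingredients are Picard–Lindelöf and the chain rule.
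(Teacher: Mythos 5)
Your proposal is correct and follows essentially the same route as the paper: Picard--Lindel\"of for local existence and uniqueness (using $C^1\Rightarrow$ locally Lipschitz when $N\geq 2$, and the assumed Lipschitzness of $f$ when $N=1$), followed by the standard bootstrap of differentiating the equation to upgrade the solution's regularity to $C^N$. Your version merely makes the iterative step of the paper explicit as an induction using the fact that compositions of $C^k$ maps are $C^k$.
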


\begin{proof}
    Let $D\subset \T\times \R^d$ be a closed rectangle on which $F_\omega$ is $C^1$, containing $(0,z)$ in its interior. Then, $F_\omega(t,x)$ is Lipshitzian in $x$ for any fixed $t$ in this rectangle \cite[Lemma 1, Chapter 1.10]{ birkhoffOrdinaryDifferentialEquations1989}. The Picard-Lindelof theorem guarantees a unique solution $x_\omega$ to the differential equation \eqref{eq: non-homogeneous nonlinear differential equation}, defined on a time interval $t \in (-R_\omega, R_\omega)$ for some $R_\omega>0$ \cite[Theorem I.3.1]{coddingtonTheoryDifferentialEquations1984}. 
    Furthermore, this solution is $C^1$. 
    Proceeding iteratively as in \eqref{eq: SDE motion of derivatives}, we obtain that the solution $x_\omega$ is $C^{N}$.
    We can repeat the proof in the case where $N=1$ and $f$ locally Lipschitz, as there exists a (possibly smaller) domain $D$ such that $F_\omega$ is $C^0$ \textit{and} Lipschitzian in the space variable for any fixed $t$.
\end{proof}

\subsubsection{Accuracy statement}

The accuracy of the generalised coordinate reformulation of the SDE is a direct consequence of Taylor's theorem. Under the conditions of Theorem~\ref{thm: regularity many times diff ODE}, we have $\e_\omega >0$ such that for all $t \in (-\e_\omega,\e_\omega)$
\begin{equation}
    \rmx^{(0)}_{\omega,t}=\sum_{n=0}^{N} \rmx^{(n)}_{\omega,0} \frac{t^n}{n!} =\sum_{n=0}^{N} \frac{d^{n}}{dt^{n}}x_{\omega,t}\big|_{t=0}\frac{t^n}{n!}= x_{\omega,t} + o_\omega(|t|^{N}).
\end{equation}
where $\rmx^{(0)}_{\omega,t}$ is the solution to the generalised Cauchy problem \eqref{eq: gen Cauchy problem} where we fixed the noise sample path $w_\omega$, and $o_\omega$ refers to the fact that the little-o remainder may also depend on $\omega$.

Thus, increasing the order $N$ yields improved approximations of $x_{\omega,t}$ on possibly smaller neighbourhoods of $t=0$. For any fixed neighbourhood however, it is possible that increasing $N$ decreases the quality of the approximation at non-zero points, e.g., as the Taylor polynomial blows up. This will be important to consider later in simulations (e.g. in numerical integration) as increasing $N$ does not necessarily yield more accurate results on a fixed time interval. It is only when the solutions of \eqref{eq: SDE} are analytic that increasing the order surely yields a better approximation on any fixed (albeit sufficiently small) interval.

\subsection{Analytic solutions}
\label{sec: Analytic solutions}

\subsubsection{Sufficient conditions}

\begin{definition}
A function $f:\R^d  \to \R^d$ is analytic on a domain $D\subseteq \R^d$ if, for any $y \in D$, there exist constants $b_{ij_0 j_1\hdots j_d}$ $(i=1,\ldots, d; j_1\hdots j_d=0,1,2, \ldots)$ and $\delta>0$, such that for any $i\in \{1,\ldots, d\}$:
\begin{equation}
\label{eq: analytic vector function}
    f(x)_i=\sum_{j_1\hdots j_d=0}^{\infty}  b^{(i)}_{ij_1\hdots j_d}\left(x_1-y_1\right)^{j_1}\ldots\left(x_d-y_d\right)^{j_d} \quad 
\end{equation}
provided that $\left|x_1-y_1\right|<\delta,\ldots, \left|x_d-y_d\right|<\delta. $
\end{definition}
The definition of analyticity for $F_\omega:\T\times\R^d\to\R^d$ can be stated analogously. Obviously, the noise $w_{\omega,t}$ and the flow $f$ are analytic on domains $U_w\subset \T,U_f\subset \R^d$, if and only if $F_\omega$ is analytic on $U_w\times U_f\subset \T\times \R^d$. 

We now state a special case of a classic theorem due to Cauchy \cite{cauchyCALCULINTEGRALMemoire2009} and Kovalevskaya \cite{kowalevskyZurTheoriePartiellen1875}.

\begin{theorem}[Cauchy–Kovalevskaya]
\label{thm: Cauchy–Kovalevskaya}
If $F_\omega$ is analytic (i.e.~$f$ and $w_\omega$ are analytic) in a neighbourhood of $(0\:\:z)$, then the initial value problem \eqref{eq: non-homogeneous nonlinear differential equation} has a unique solution $x_\omega: (-R_\omega, R_\omega)\to \R^d$ for some $R_\omega >0$. Furthermore, $x_\omega$ is analytic on $(-R_\omega, R_\omega)$. 
\end{theorem}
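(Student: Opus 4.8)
The plan is to prove analyticity by the classical \emph{method of majorants}; existence and uniqueness come essentially for free. Indeed, an analytic $F_\omega$ is in particular $C^\infty$, so \cref{thm: regularity many times diff ODE} already supplies a unique solution $x_\omega \in C^\infty((-R_\omega,R_\omega))$; the only thing left to show is that this smooth solution is \emph{analytic} near $t=0$, i.e.\ that its formal Taylor series at the origin has a positive radius of convergence and sums to $x_\omega$. After translating $x \mapsto x - z$ we may assume $z = 0$, so that $F_\omega$ is analytic near $(0,0)$.

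First I would identify the candidate Taylor coefficients. Setting $a_n \triangleq \tfrac{1}{n!}\tfrac{d^n}{dt^n}x_{\omega,t}\big|_{t=0} \in \R^d$, the differentiated equations \eqref{eq: SDE motion of derivatives} evaluated at $t=0$, combined with the multivariate Taylor expansion of $F_\omega$ about $(0,0)$, express $a_n$ recursively as a universal polynomial $P_n$ in the lower coefficients $a_0,\dots,a_{n-1}$ and in the Taylor coefficients $\{\partial^\alpha F_\omega(0,0)/\alpha!\}$ of the data. The key structural point — already visible in the chain-rule expansions behind \cref{claim: gen flow} and in multivariate Fa\`a di Bruno — is that every coefficient of $P_n$ is a \emph{nonnegative} rational number, and this monotonicity is what makes a coefficientwise comparison legitimate.

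Next I would build the majorant. Cauchy estimates for the analytic $F_\omega$ give constants $M, r > 0$ with $\bigl|\partial^\alpha_{(t,x)} F_\omega(0,0)\bigr|/\alpha! \le M\, r^{-|\alpha|}$ for all multi-indices $\alpha$, so the explicit function
\[
 G(t,x) \triangleq \frac{M}{\bigl(1 - t/r\bigr)\bigl(1 - (x_1 + \cdots + x_d)/r\bigr)}
\]
majorizes $F_\omega$ coefficientwise. The scalar comparison problem $\dot v = M\,[(1-t/r)(1-dv/r)]^{-1}$, $v(0) = 0$, separates variables and can be solved in closed form, giving a function algebraic in $t$ that is manifestly analytic on some interval $|t| < \rho_0$ with $\rho_0 > 0$; write $\sum_n b_n t^n$ for its (nonnegative-coefficient) Taylor series. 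By induction on $n$, feeding the coefficientwise inequality on the data together with the hypothesis $\|a_k\|_\infty \le b_k$ for $k < n$ into the monotone polynomial $P_n$, one obtains $\|a_n\|_\infty \le b_n$. Hence $\sum_n a_n t^n$ converges for $|t| < \rho_0$ and defines an analytic $\R^d$-valued function; differentiating term by term inside the radius of convergence shows it solves \eqref{eq: non-homogeneous nonlinear differential equation}, so by the uniqueness in \cref{thm: regularity many times diff ODE} it agrees with $x_\omega$ near $0$. Replacing $R_\omega$ by $\min(R_\omega, \rho_0)$ completes the argument.

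The main obstacle — the only genuinely delicate step — is the bookkeeping in the second paragraph: one must set up the recursion for the $a_n$ carefully enough to see that all of its combinatorial coefficients are nonnegative, since it is precisely this positivity that allows every quantity to be dominated by its majorant counterpart. Everything else — the Cauchy estimates, the closed-form solution of the scalar majorant ODE, term-by-term differentiation, and the final appeal to uniqueness — is routine once this structural fact is secured.
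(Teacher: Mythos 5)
Your argument is correct and is essentially the same as the paper's: the paper proves uniqueness via Picard--Lindel\"of and cites the classical method of analytic majorisation (Birkhoff) for existence of an analytic solution, which is exactly the majorant construction you carry out explicitly (nonnegative recursion coefficients, Cauchy estimates, scalar majorant ODE, coefficientwise induction, then identification with the smooth solution by uniqueness). One trivial slip: the solution of your majorant ODE involves $\log(1-t/r)$ and so is not algebraic in $t$, but it is analytic near $t=0$, which is all the argument needs.
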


As usual, a function $f:D\subset \R^d  \to \R^d$ is analytic if and only if it can be extended to a complex holomorphic function $f:D'\subset \C^d  \to \C^d$ (where $D'\supset D$). This means that when $F_\omega$ is analytic around $(0,z)$, the initial value problem \eqref{eq: non-homogeneous nonlinear differential equation} can be extended to the complex plane. Complex analysis then provides an additional toolset to analyse these equations, cf.~\cite[Chapter 4]{ teschlOrdinaryDifferentialEquations2012}.

\begin{proof}
Uniqueness of solutions is guaranteed by the Picard-Lindelof theorem  \cite[Theorem I.3.1]{coddingtonTheoryDifferentialEquations1984}. Existence of an analytic solution can be shown in at least two ways and we provide references below:
\begin{itemize}
    \item \cite[Theorem 5, p127]{birkhoffOrdinaryDifferentialEquations1989} proves the result through the classical method of analytic majorisation: 1) finding a formal power series solution, and then showing (through a majorisation argument) that it has a positive radius of convergence. The argument is made for $d=1$, but it generalises to arbitrary $d$.
    \item \cite[Theorem 4.2]{ teschlOrdinaryDifferentialEquations2012} proves the result for the equivalent initial value problem on the complex domain, exploiting the fact that the Picard iterates---in the proof of Picard iteration---are analytic and converge to an analytic function.
\end{itemize}
\end{proof}

\subsubsection{Accuracy statement}

Under the conditions of Theorem~\ref{thm: Cauchy–Kovalevskaya}, $N=\infty$ implies that the generalised coordinate solution coincides with the original SDE solution 

\begin{equation}
\label{eq: analytic case accuracy}
    \rmx^{(0)}_{\omega,t}=\sum_{n=0}^{\infty} \rmx^{(n)}_{\omega,0} \frac{t^n}{n!} =\sum_{n=0}^{\infty} \frac{d^{n}}{dt^{n}}x_{\omega,t}\big|_{t=0}\frac{t^n}{n!}= x_{\omega,t}
\end{equation}
over some time interval $t\in (-R_\omega,R_\omega)$. The question now is, what is the radius $R_\omega>0$ and what can we say about its distribution as $\omega$ varies?

\begin{itemize}
    \item We will give an uniform value for $R_\omega$ for the linear SDE driven by a stationary Gaussian process with analytic sample paths in Section~\ref{sec: smooth and analytic fluctuations}.
    \item In the non-linear $f$ case, \cite[Theorem 4.1]{teschlOrdinaryDifferentialEquations2012} gives a tight (non-uniform in $\omega$) lower bound on $R_\omega$, however this requires first extending and analysing $F_\omega$ (and thus $w_{\omega}$) in the complex domain, which may be impractical. Relevant here is also the Cauchy–Kovalevskaya bound on the radius of the solutions of analytic PDEs \cite[Theorem 9.4.5]{hormanderAnalysisLinearPartial2013}, also non-uniform in $\omega$.
    \item Lastly, the following simple example shows that even though the coefficients of an ODE such as \eqref{eq: non-homogeneous nonlinear differential equation} may be entire, the solution may have an arbitrarily small radius of analyticity $R_\omega$:
    \begin{example}
The one-dimensional differential equation $\dot x_t = 1 +x^2_t$ with initial condition $x_0=z\in \R$ has a unique solution $x_t=\tan(t+c)$, where $c=c(z)=\tan^{-1} z\in (-\pi/2,\pi/2)  $. Its maximal domain of definition around the time origin is $(-\pi/2-c, \pi/2-c)$. Its radius of convergence around the time origin is therefore $R(z)=\min \{|-\pi/2-c|, \pi/2-c\}$. In particular, $z\nearrow+\infty$ implies $c(z) \nearrow \pi/2$ and $R(z)\searrow 0 $. This holds despite the fact that the radius of convergence of the flow $F_\omega(t,x)=1+x^2$ is infinite.
\end{example}
See also \cite[p. 113]{teschlOrdinaryDifferentialEquations2012} for an example where the ODE solution is defined for all $t \geq 0$ but its radius of convergence can be made arbitrarily small. These non-examples remove the hope that the generalised coordinate solution $\mathbf{x}$ always exists globally in time when the flow $f$ and random fluctuations $w$ are entire almost surely.
\end{itemize}

\section{Usefulness}

We now observe that the generalised coordinate formulation \eqref{eq: gen Cauchy problem} can be used to approximate the solutions of a wide range of stochastic differential equations with possibly singular drift and rough noise. Indeed, given an SDE with a (possibly singular) flow $f$ driven by (possibly rough) noise fluctuations $w$, there usually is an \textit{approximating SDE} of the form \eqref{eq: SDE} with analytic flow and fluctuations---for which the reformulation in generalised coordinates \eqref{eq: gen Cauchy problem} is exact by Section~\ref{sec: Analytic solutions}.

The intuition is that many types of noise can be approximated by noises with analytic sample paths through smoothing. The smoothing operation considered here is convolution with a Gaussian: if a noise process is smoothed in this way, one can recover it---or approximate it arbitrarily well---by letting the Gaussian standard deviation tend to zero. Intuitively, two SDEs with noises that are arbitrarily close and that are identical otherwise should produce solutions that are arbitrarily close (see \cref{sec: wong zakai}). The same ideas go for approximating the drift in SDEs through smoothing.


\subsection{Analytic approximation of noise}

We now formalise these intuitions by showing sufficient conditions under which Gaussian convolution yields analytic sample paths.

\subsubsection{The case of white noise}

White noise is the prototypical example of random fluctuations in stochastic differential equations (known as diffusion equations), due to their long history as an idealised model of thermal molecular noise \cite{einsteinUberMolekularkinetischenTheorie1905}.

\begin{definition}[White noise, informal]
    \label{def: white noise}
 White noise $\omega_t$ is a mean-zero (stationary) Gaussian process on $t\in \R$ with autocovariance $\mathbb E[\omega_t \omega_s]= \delta(t-s)$ where $\delta$ is the Dirac delta function. 
\end{definition}

It is desirable that generalised coordinates be able to approximate the solutions of diffusion equations, at least in principle. This turns out to be the case: we show that diffusion equations where the driving (white) noise is smoothed satisfy the necessary conditions for generalised coordinates.

To show this we analyse white noise convolved with a Gaussian 
\begin{equation}
    \label{eq: Gaussian convolved white noise}
    w_t=\int_{\mathbb R} \omega_s \phi({t-s}) d s,  \quad t\in\R, \quad \phi_t\triangleq\phi(t) \triangleq (\sqrt{2\pi} \sigma)^{-1}e^{-\frac{t^2}{2\sigma^2} }
\end{equation}
where $\sigma>0$ is the standard deviation parameter. 
Formally, as $\sigma\searrow 0$, $\phi \to \delta$ and thus $w_t \to  \omega_t$.

Gaussian convolved white noise \eqref{eq: Gaussian convolved white noise} is a mean-zero stationary Gaussian process with Gaussian autocovariance 
\begin{equation}
\label{eq: autocov Gaussian convolved white noise}
    \begin{split}
       \E[w_t w_{t+h}] &= \E\left[\int_\R \omega_s \phi_{t-s} d s\int_\R \omega_\tau \phi_{t+h-\tau} d \tau\right]\\
       &= \int_\R \int_\R \phi_{t-s}\phi_{t+h-\tau} \E[w_s w_\tau] ds d\tau\\
       &= \int_\R \phi_{t-s}\phi_{t+h-s} ds\\
       &= \frac{1}{2\pi \sigma^2}\int_\R  e^{-\frac{(t-s)^2}{2\sigma^2}-\frac{(t+h-s)^2}{2\sigma^2}} ds\\
        & =\frac{1}{2\pi \sigma^2}e^{- \frac{h^2}{4\sigma^2}}\int_\R e^{-\frac{\left(s-(t+h/2)\right)^2}{\sigma^2}} d s \\ 
        & =\frac{1}{2\sqrt\pi \sigma}e^{- \frac{h^2}{4\sigma^2}}
    \end{split}
\end{equation}
where the last line follows from the well-known value of the Gaussian integral.

\begin{lemma}
Gaussian convolved white noise \eqref{eq: Gaussian convolved white noise} has the power series representation
    \begin{equation}
    \label{eq: power series convolved white noise}
        w_{\sqrt{2}\sigma t}\stackrel{\ell}{=} (4\pi\sigma^2)^{-\frac 14}e^{-\frac{t^2}{2}} \sum_{n=0}^\infty \frac{ t^n}{\sqrt{n!}}\xi_n ,\quad t\in \R,
    \end{equation}
where $\xi_n \sim \mathcal N(0,1), n \in \N$, are i.i.d. standard normal random variables and $\stackrel{\ell}{=} $ denotes equality in law.\footnote{Equality in law means that both processes have the same finite dimensional distributions.}
\end{lemma}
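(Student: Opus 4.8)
The plan is to exploit the fact that the law of a centered Gaussian process is determined by its covariance function, so it suffices to verify that the two sides of \eqref{eq: power series convolved white noise} are both centered Gaussian processes (indexed by $t\in\R$) with the same covariance. The left-hand side is centered Gaussian because $w$ is a Wiener integral of white noise against $\phi$; the right-hand side will be shown to be centered Gaussian once its defining series is shown to converge.

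First I would record the covariance of the left-hand side. By the computation in \eqref{eq: autocov Gaussian convolved white noise}, $\E[w_{t_1}w_{t_2}] = (4\pi\sigma^2)^{-1/2}\exp\!\big(-(t_1-t_2)^2/(4\sigma^2)\big)$. Substituting $t_1 = \sqrt 2\,\sigma s$, $t_2 = \sqrt 2\,\sigma t$ and simplifying the exponent via $2\sigma^2(s-t)^2/(4\sigma^2) = (s-t)^2/2$ gives
\[
  \E\big[w_{\sqrt 2\sigma s}\, w_{\sqrt 2\sigma t}\big] \;=\; (4\pi\sigma^2)^{-\frac12}\, e^{-\frac{(s-t)^2}{2}}, \qquad s,t\in\R.
\]

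Next I would make sense of the right-hand side. For fixed $t\in\R$ the partial sums $\sum_{n=0}^M \frac{t^n}{\sqrt{n!}}\xi_n$ are Cauchy in $L^2$, since the $\xi_n$ are orthonormal and $\sum_{n\geq 0}\frac{t^{2n}}{n!} = e^{t^2}<\infty$; as the summands are moreover independent with summable variances, Kolmogorov's two-series theorem also yields almost-sure convergence. Hence $W_t \triangleq (4\pi\sigma^2)^{-\frac14}e^{-t^2/2}\sum_{n}\frac{t^n}{\sqrt{n!}}\xi_n$ is well defined, and for any finite set of times the vector $(W_{t_1},\dots,W_{t_k})$ is an $L^2$-limit of jointly Gaussian vectors, hence centered Gaussian. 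Its covariance is then computed term by term — the interchange of $\E$ with the sum being legitimate by the bound $\sum_n t^{2n}/n! = e^{t^2}$ — using $\E[\xi_n\xi_m]=\delta_{nm}$:
\[
  \E[W_s W_t] \;=\; (4\pi\sigma^2)^{-\frac12}\, e^{-\frac{s^2+t^2}{2}}\sum_{n=0}^\infty \frac{(st)^n}{n!} \;=\; (4\pi\sigma^2)^{-\frac12}\, e^{-\frac{s^2+t^2}{2}}\,e^{st} \;=\; (4\pi\sigma^2)^{-\frac12}\, e^{-\frac{(s-t)^2}{2}}.
\]

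The two covariances coincide, and both processes are centered Gaussian, so they have the same finite-dimensional distributions, which is precisely the claimed equality in law. There is no substantial obstacle in this argument; the only points requiring care are the convergence of the random series (so that the right-hand side is a genuine process rather than a formal one) and the termwise computation of its covariance, both dispatched by the elementary estimate $\sum_n t^{2n}/n! = e^{t^2}$. As a by-product worth flagging, although not needed for the statement, one has $\limsup_n|\xi_n|^{1/n}=1$ a.s. while $(n!)^{-1/(2n)}\to 0$, so the random power series has infinite radius of convergence and the sample paths of the right-hand side are in fact entire almost surely.
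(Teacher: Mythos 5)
Your proposal is correct and follows essentially the same route as the paper: both arguments identify the two sides as mean-zero Gaussian processes and match their autocovariances, with the Gaussianity and convergence of the random series handled by the square-summability of the coefficients (the paper via its stated fact on sums $\sum_n v_n\xi_n$, you via $L^2$-Cauchy partial sums and limits of jointly Gaussian vectors). Your covariance computation even fixes a small typo in the paper's display, where $\sum_n (ts)^n/\sqrt{n!}$ should read $\sum_n (ts)^n/n!$ as in your version.
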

\begin{proof}
First, we define
\begin{equation*}
        y_t\triangleq(4\pi\sigma^2)^{-\frac 14}e^{-\frac{t^2}{2}}\sum_{n=0}^\infty \frac{ t^n}{\sqrt{n!}}\xi_n, \quad t\in \R.
\end{equation*}
     Since the law of a Gaussian process is determined by the mean and autocovariance, we just need to show that 1) $y$ is a Gaussian process, 2) $y$ has the same mean as $w$, 3) $y$ has the same autocovariance as $w$. To do this we need the following basic fact about Gaussian random variables (which can be shown using Levy's continuity theorem; cf.~partial proofs here\cite{stackexchange674128, stackexchange1110461}):
         \begin{fact}
         \label{fact: gaussian rvs}
            Given $v_n \in \mathbb{R}^m$ for $n \in \N$, and $\sum_{n \geq 0}\left\|v_n\right\|^2<+\infty$, then $v\triangleq\sum_{n \geq 0}  v_n\xi_n$ is an $m$-dimensional Gaussian random variable, with mean $\E[v]=0$ and covariance $\E\left[v v^\top\right]_{ij}=\sum_{n \geq 0}\left(v_n\right)_i\left(v_n\right)_j$. 
         \end{fact}
     \begin{enumerate}
         \item Consider a finite sequence of times $t_1, \ldots, t_m \in \R$, and define $v\triangleq (y_{t_1}, \ldots, y_{t_m})$. Showing that $y$ is a Gaussian process amounts to showing that $v$ is joint normal. By construction
         \begin{equation*}
             v=\sum_{n \geq 0} v_n \xi_n  \quad \text{where } v_n \in \R^m, \:(v_n)_j\triangleq(4\pi\sigma^2)^{-\frac 14}e^{- {t_j^2}/{2}} \frac{ t_j^n}{\sqrt{n !}}.
         \end{equation*}
         To use the basic fact we observe that
         \begin{equation}
         \begin{split}
            &\sum_{n \geq 0}\left\|v_n\right\|^2 =\sum_{n \geq 0}\sum_{1 \leq j \leq m}  \left(2\sqrt\pi\sigma\right)^{-1} e^{- t_j^2} \frac{ \left(t_j^{2}\right)^n}{n!} \\
            =&\sum_{1 \leq j \leq m}  \left(2\sqrt\pi\sigma\right)^{-1} e^{-t_j^2} \sum_{n \geq 0}\frac{ \left(t_j^{2}\right)^n}{n!}  =\sum_{1 \leq j \leq m} \left(2\sqrt\pi\sigma\right)^{-1} <\infty.  
         \end{split}
        \end{equation}
         By the basic fact~\ref{fact: gaussian rvs}, $v$ is joint normal.
         \item By the basic fact~\ref{fact: gaussian rvs}, 
         $ \E[v]=0$, so $y$ has zero-mean.
         \item The basic fact~\ref{fact: gaussian rvs} yields the autocovariance of $y$
\begin{align*}
    \E[y_ty_{s}]&=\sum_{n\geq0 }(4\pi\sigma^2)^{-\frac 14} e^{-\frac{t^2}{2} } \frac{t^n}{\sqrt{n !}}(4\pi\sigma^2)^{-\frac 14} e^{-\frac{s^2}{2} } \frac{ s^n}{\sqrt{n !}} \\
    &= \left(2\sqrt\pi\sigma\right)^{-1} e^{-\frac{(t^2 +s^2)}{2}}\sum_{n \geq 0}\frac{( t s)^n}{\sqrt{n !}} \\&=\left(2\sqrt\pi\sigma\right)^{-1} e^{-\frac{(t^2 +s^2)}{2}}e^{ ts}
    =\left(2\sqrt\pi\sigma\right)^{-1} e^{-\frac{(t -s)^2}{2}}
\end{align*}
By inspection of \eqref{eq: autocov Gaussian convolved white noise} this is the autocovariance of $t\mapsto w_{\sqrt 2 \sigma t}$.
 \end{enumerate}   
\end{proof}

\begin{theorem}
    Gaussian convolved white noise \eqref{eq: Gaussian convolved white noise} $w_t$
    extends to a process for $t\in \C$, with entire sample paths (i.e.~analytic on $\C$), almost surely.
\end{theorem}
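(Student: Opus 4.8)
The plan is to exploit the power series representation \eqref{eq: power series convolved white noise} and show that the random series appearing there has, almost surely, infinite radius of convergence, so that it extends to an entire function of the complexified time variable; multiplying by the entire factor $(4\pi\sigma^2)^{-1/4}e^{-t^2/2}$ and undoing the time rescaling then yields an entire-sample-path version of $w$.

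Concretely, set $S(t) \triangleq \sum_{n=0}^{\infty} \frac{t^n}{\sqrt{n!}}\,\xi_n$, regarded now as a formal power series in $t \in \C$. By the Cauchy–Hadamard formula its radius of convergence is $R = \bigl(\limsup_{n} |\xi_n|^{1/n}\,(n!)^{-1/(2n)}\bigr)^{-1}$. Elementarily $(n!)^{1/(2n)} \to \infty$ (e.g.\ $n! \geq (n/2)^{n/2}$). To control the numerator, the Gaussian tail bound $\mathbb{P}(|\xi_n| > x) \leq 2e^{-x^2/2}$ gives $\sum_n \mathbb{P}(|\xi_n| > 2^n) < \infty$, so by Borel–Cantelli, almost surely $|\xi_n| \leq 2^n$ for all large $n$, hence $\limsup_n |\xi_n|^{1/n} \leq 2 < \infty$ almost surely. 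Combining the two facts, $|\xi_n|^{1/n}(n!)^{-1/(2n)} \to 0$ almost surely, so $R = +\infty$ on a full-probability event, and on that event $S$ is an entire function on $\C$.

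On that event define $y_t \triangleq (4\pi\sigma^2)^{-1/4} e^{-t^2/2} S(t)$ for $t \in \C$; since $e^{-t^2/2}$ is entire and $S$ is a.s.\ entire, $y$ has entire sample paths a.s. Reparametrising, put $\tilde w_s \triangleq y_{s/(\sqrt 2\,\sigma)}$, which likewise has entire sample paths a.s. By the previous lemma, for real $s$ the process $\tilde w$ coincides in law with $w$; indeed both $w$ and $\tilde w|_{\R}$ are mean-zero Gaussian processes with the autocovariance \eqref{eq: autocov Gaussian convolved white noise}, and the law of a Gaussian process is determined by its mean and covariance, so $\tilde w|_{\R} \stackrel{\ell}{=} w$. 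Thus $\tilde w$ is a version of $w$ that is defined for all $t \in \C$ and has entire sample paths almost surely, which is the assertion.

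The only genuinely delicate point is interpretational rather than computational: \eqref{eq: power series convolved white noise} is an equality in law, so what this argument produces is a \emph{version} of $w$ with entire sample paths, not a statement that the literal Wiener-integral process $t \mapsto \int_{\R}\phi(t-s)\,dW_s$ is itself analytic in $t$. If one wants the latter one can instead define $w_t$ for $t \in \C$ directly as the Wiener integral against the kernel $\phi(t-\cdot)$, which lies in $L^2(\R)$ for each fixed complex $t$ because $|\phi(t-s)|$ still decays like a Gaussian in $s$, and then verify holomorphy by differentiating under the integral sign together with a Morera/Kolmogorov-continuity argument to fix a jointly measurable holomorphic modification; but the power series route is self-contained given the lemma and I would present it as the main proof. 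I expect the radius-of-convergence estimate to be the substantive step, and it is routine.
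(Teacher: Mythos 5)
Your proof is correct and follows essentially the same route as the paper's: it uses the power series representation \eqref{eq: power series convolved white noise}, the Cauchy--Hadamard theorem, and a Borel--Cantelli argument to show the coefficients decay fast enough for infinite radius of convergence almost surely. The only difference is cosmetic (you bound $|\xi_n|\le 2^n$ via Gaussian tails and use $(n!)^{1/(2n)}\to\infty$, whereas the paper applies Chebyshev directly to $|\xi_n|/\sqrt{n!}$), and your closing remark about obtaining a version of $w$ rather than the literal process matches the caveat in the paper's own remark.
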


\begin{proof}
    We just need to show that the series \eqref{eq: power series convolved white noise} converges for every $t \in \C$ almost surely. It will then define an (almost surely) entire random function $\C\to \C$.
    
    By Cauchy-Hadamard's theorem \cite[Theorem 2.5]{steinComplexAnalysis2003} the radius of convergence $R$ of the power series \eqref{eq: power series convolved white noise} is:
    \begin{equation}
1 / R=\limsup_{n \to \infty} \left|\frac{\xi_n}{\sqrt{n!}} \right|^{1 / n}
\end{equation}
\begin{claim}
We claim that $\left|\frac{\xi_n}{\sqrt{n!}}\right|^{1/n}\xrightarrow{n \to \infty} 0$ almost surely.
\end{claim}

The claim implies that $1 / R =0$ and $R =+\infty$ almost surely.
\begin{proofclaim}
        Note that $\frac{\xi_n}{\sqrt{n!}}$ is a mean-zero normal random variable with standard deviation $\frac{1}{\sqrt{n!}}$. Defining the sequence of events $A_n\triangleq\left\{\left|\frac{\xi_n}{\sqrt{n!}}\right|^{1/n}\geq \frac {1}{n^{1/4}}\right\},$ we have, by Chebyschev's inequality:
    \begin{align*}
        \p(A_n)&=\p\left(\frac{|\xi_n|}{\sqrt{n!}}\geq\frac {\sqrt{n!}}{n^{n/4}}\frac {1}{\sqrt{n!}}\right)\leq \frac{n^{n/2}}{n!}.
    \end{align*}
    By Stirling's formula, there exists a constant $C>0$ such that $n!\geq Cn^n$ for all $n$. So $\sum_{n \geq 0} \p (A_n)\leq \sum_{n \geq 0} \frac{n^{n/2}}{n!}\leq \sum_{n \geq 0} \frac{1}{C n^{n/2}} < +\infty.$ The first Borel-Cantelli lemma then yields $\p\left(\liminf _{n \rightarrow \infty} A_n^c\right)=1 ,$ where $\liminf _{n \rightarrow \infty} A_n^c \triangleq \bigcup_{n=0}^{\infty} \bigcap_{k=n}^{\infty} \left\{\left|\frac{\xi_n}{\sqrt{n!}}\right|^{1/n}< \frac {1}{n^{1/4}}\right\} .$ In other words, with probability $1$, $\left|\frac{\xi_n}{\sqrt{n!}}\right|^{1/n}< \frac {1}{n^{1/4}}$ occurs for all but finitely many $n$. In particular, $\left|\frac{\xi_n}{\sqrt{n!}}\right|^{1/n}\to 0$ with probability $1.$
\end{proofclaim}
\begin{remark}
        Note that the convergence of $\left|\frac{\xi_n}{\sqrt{n!}}\right|^{1/n}\to 0$ holds almost surely but not surely as any sequence of real numbers can be realised by $\left|\frac{\xi_n}{\sqrt{n!}}\right|^{1/n}, n \geq 0$. However the set of sequences not converging to $0$ has probability zero. This shows that $w_t$ extends to the complex plane only almost surely, 
 and thus all our future statements leveraging the analytic sample paths of Gaussian convolved white noise are almost sure statements.
\end{remark}
\end{proof}

\subsubsection{The case of integrable noise}

Now consider an arbitrary noise process $w: \T \times \Omega \rightarrow \mathbb{R}$: we unpack conditions under which its sample paths are analytic when convolved with a Gaussian. For this, we fix an element of the sample space $\omega \in \Omega$, so that the corresponding noise sample path $w_\omega$ is determined. 

Consider the Gaussian convolution
\begin{align}
    \label{eq: convolution}
 w_\omega* \phi: \C\to \C, \: z\mapsto \int_\T w_{\omega,t}\phi(z-t)\d t ,
\end{align}
where the Gaussian is given, without loss of generality (up to re-scaling of space and the convolution) by $\phi : \C\to \C,  z \mapsto e^{-z^2}$.

\begin{remark}
    Note that the Gaussian $\phi$ is unbounded on the complex plane
\begin{align}
\label{eq: phi unbounded}
    \phi(x+i y)=e^{-(x+i y)^2}=e^{-x^2-2 i x y+y^2} =e^{-x^2+y^2} e^{-2 i x y}\Rightarrow |\phi(x+i y)|=e^{-x^2+y^2}
\end{align}
but bounded on its compact subsets.
\end{remark}

\textbf{Question:} under what conditions on the sample path $w_\omega$ is the convolution \eqref{eq: convolution} well-defined?

\begin{lemma}
\label{lemma: suff conds conv exists}
        Suppose that $ w_\omega\in L^1(\T)$, then \eqref{eq: convolution} is well-defined (i.e.~the integral converges).
\end{lemma}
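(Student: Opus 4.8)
The plan is to fix $\omega \in \Omega$ and $z = x+iy \in \C$ and to show that the integrand $t \mapsto w_{\omega,t}\phi(z-t)$ is in $L^1(\T)$, so that the defining integral converges absolutely. First I would record the pointwise bound on the Gaussian kernel: writing $z - t = (x-t) + iy$ with $t \in \T \subseteq \R$, the computation in \eqref{eq: phi unbounded} gives $|\phi(z-t)| = e^{-(x-t)^2 + y^2} = e^{y^2} e^{-(x-t)^2}$. The key observation is that, for fixed $z$, the factor $e^{y^2}$ is a constant and $e^{-(x-t)^2} \leq 1$ for all $t$, so $|\phi(z-t)| \leq e^{y^2}$ uniformly in $t \in \T$; that is, the kernel is bounded on $\T$ by a constant depending only on $z$.

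Next I would estimate the integral directly:
\begin{equation*}
\int_\T |w_{\omega,t}| \, |\phi(z-t)| \d t \leq e^{y^2} \int_\T |w_{\omega,t}| \, e^{-(x-t)^2} \d t \leq e^{y^2} \int_\T |w_{\omega,t}| \d t = e^{y^2} \|w_\omega\|_{L^1(\T)} < \infty,
\end{equation*}
where the last inequality is precisely the hypothesis $w_\omega \in L^1(\T)$. This shows the integrand is absolutely integrable, hence \eqref{eq: convolution} is well-defined as a (finite) complex number for every $z \in \C$. Since $\T$ is an interval in $\R$, no issue of measurability arises beyond the standing assumption that $w_\omega$ is a measurable (indeed $L^1$) function.

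There is essentially no hard step here: the only thing to be slightly careful about is that, although $\phi$ is unbounded on all of $\C$ (as emphasised in the remark preceding the lemma), for each \emph{fixed} evaluation point $z$ the relevant slice $t \mapsto \phi(z-t)$, $t \in \T$, is bounded — the unboundedness is in the imaginary direction, which is frozen once $z$ is fixed. So the $L^1$ hypothesis on $w_\omega$, combined with an $L^\infty$ bound on the kernel slice, suffices by Hölder's inequality (the trivial $L^1$–$L^\infty$ case). If one wanted the convolution to be not merely well-defined but holomorphic in $z$ — which is presumably the next step in the paper — one would additionally invoke the bound being locally uniform in $z$ together with Morera's theorem or differentiation under the integral sign, but that is beyond what the present statement asks.
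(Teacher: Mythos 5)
Your proof is correct and follows essentially the same route as the paper: fix $z=x+iy$, use $|\phi(z-t)|=e^{-(x-t)^2+y^2}\leq e^{y^2}$ from \eqref{eq: phi unbounded}, and bound the integral by $e^{y^2}\|w_\omega\|_{L^1(\T)}<\infty$. No differences worth noting.
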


Lemma~\ref{lemma: suff conds conv exists} is relatively sharp given no further assumptions on the noise as the integral \eqref{eq: convolution} may not converge for $w_\omega\in L^1_\text{loc}(\T)$ and unbounded $\T$. 

\begin{proof}[Proof of Lemma~\ref{lemma: suff conds conv exists}]
Let $z=x+iy\in \C$. By \eqref{eq: phi unbounded},
    \begin{align*}
      \left| \int_\T  w_{\omega,t}\phi(z-t) \d t\right| \leq \int_\T | w_{\omega,t}\phi(z-t)| \d t=\int_\T | w_{\omega,t}| e^{-(x-t)^2+y^2}\d t\leq e^{y^2}\int_\T | w_{\omega,t}|\d t<\infty.
    \end{align*}
\end{proof}

\textbf{Question:} under what conditions on $w_\omega$ is the convolution \eqref{eq: convolution} analytic on $\T$?

\begin{proposition}
\label{prop: Gaussian conv is entire}
        If $w_\omega \in L^1(\T)$, then the convolution \eqref{eq: convolution} is entire. In particular it is analytic on $\T$.
\end{proposition}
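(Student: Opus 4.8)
The plan is to prove that $g \triangleq w_\omega * \phi$ is holomorphic on all of $\C$ by Morera's theorem, which reduces the statement to the trivial observation that $z \mapsto \phi(z-t) = e^{-(z-t)^2}$ is entire for each fixed $t \in \T$. An essentially equivalent alternative, sketched at the end, is to differentiate under the integral sign.

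First I would check that $g$ is continuous (indeed locally bounded) on $\C$. Fix a compact set $K \subset \C$ and set $M_K \triangleq \max_{z \in K} |\operatorname{Im} z|$. For $z = x + iy \in K$, the identity \eqref{eq: phi unbounded} gives $|w_{\omega,t}\,\phi(z-t)| = |w_{\omega,t}|\, e^{-(x-t)^2 + y^2} \leq e^{M_K^2}\, |w_{\omega,t}|$, and the right-hand side is integrable over $\T$ since $w_\omega \in L^1(\T)$ --- this is exactly the estimate underlying Lemma~\ref{lemma: suff conds conv exists}. Hence for any $z_n \to z$ in $K$ the integrands converge pointwise in $t$ and are dominated by the fixed $L^1(\T)$ function $e^{M_K^2}|w_\omega|$, so the dominated convergence theorem gives $g(z_n) \to g(z)$. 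Thus $g$ is continuous on $\C$.

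Next I would apply Morera's theorem \cite{steinComplexAnalysis2003}: since $g$ is continuous, it suffices to show $\oint_{\partial \Delta} g(z)\,dz = 0$ for every closed triangle $\Delta \subset \C$. Expanding the definition and interchanging the contour integral with the $t$-integral,
\begin{equation*}
\oint_{\partial \Delta} g(z)\,dz = \oint_{\partial \Delta}\!\left(\int_\T w_{\omega,t}\,\phi(z-t)\,dt\right)dz = \int_\T w_{\omega,t}\!\left(\oint_{\partial \Delta}\phi(z-t)\,dz\right)dt = 0,
\end{equation*}
where the last equality holds because $z \mapsto \phi(z-t)$ is entire, so its integral over the boundary of a triangle vanishes by the Cauchy--Goursat theorem. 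The interchange is justified by Fubini's theorem: $(t,z)\mapsto w_{\omega,t}\,\phi(z-t)$ is jointly measurable, and on the product of $\T$ with the compact rectifiable curve $\partial\Delta$ it is dominated by $e^{M^2}|w_{\omega,t}|$ (with $M \triangleq \max_{z\in\partial\Delta}|\operatorname{Im} z|$), whose integral against Lebesgue measure on $\T$ times the finite length of $\partial\Delta$ is finite. Morera's theorem then yields that $g$ is holomorphic on $\C$, i.e.\ entire; restricting to $\T \subset \R \subset \C$ shows $g|_\T$ is real-analytic, which is the ``in particular'' clause.

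The steps I am skipping --- joint measurability of the integrand and the precise bookkeeping in the domination bounds --- are immediate from \eqref{eq: phi unbounded}. The only point that needs a little care, and which I would regard as the main (mild) obstacle, is arranging the domination to be \emph{locally uniform} in $z$ rather than merely pointwise, so that both the continuity argument and the Fubini step run on arbitrary compact sets / triangles; this is handled once and for all by the fact (noted in the remark preceding the proposition) that although $|\phi|$ is unbounded on $\C$, it is bounded on each compact subset, with explicit bound $e^{(\operatorname{Im} z)^2}$. The same local domination, applied instead to $\phi'(z-t) = -2(z-t)e^{-(z-t)^2}$, also gives the alternative proof by differentiation under the integral sign, yielding directly that $g$ is holomorphic with $g'(z) = \int_\T w_{\omega,t}\,\phi'(z-t)\,dt$.
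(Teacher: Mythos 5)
Your proof is correct, and it takes a genuinely different route from the paper. The paper never invokes Morera's theorem directly: it truncates the Gaussian with bump functions $\phi_n = b_n\phi$, shows each truncated convolution $w_\omega * \phi_n$ is holomorphic on the square $\square_n$ by passing a difference quotient through the integral (using the Lipschitz bound \eqref{eq: Gaussian approx Lipschitz} and dominated convergence), proves that $w_\omega * \phi_n \to w_\omega * \phi$ locally uniformly via an explicit tail estimate $|\phi - \phi_n|(x+iy) \leq e^{-n^2 + m^2}$ on horizontal strips, and then concludes with the standard fact that locally uniform limits of holomorphic functions are holomorphic. Your argument short-circuits all of this: the single observation that $|\phi(x+iy)| = e^{-x^2+y^2} \leq e^{(\operatorname{Im} z)^2}$ gives a locally uniform $L^1$ domination, which simultaneously yields continuity of $g = w_\omega * \phi$ (by dominated convergence) and justifies the Fubini interchange over triangles, so Morera plus Cauchy--Goursat for the entire function $z \mapsto \phi(z-t)$ finishes the proof; your sketched alternative of differentiating under the integral against $\phi'(z-t)$ works for the same reason and additionally hands you the formula $g'(z) = \int_\T w_{\omega,t}\,\phi'(z-t)\,dt$ directly for $g$ rather than only for the truncations. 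The trade-off is mainly one of machinery: your route is shorter and leans on classical theorems (Morera, Fubini), whereas the paper's truncation-and-uniform-limit argument is more hands-on and self-contained in its estimates --- though the fact it ultimately cites (uniform limits of holomorphic functions are holomorphic) is itself usually proved via Morera, so in substance your proof reaches the same conclusion by a more direct path.
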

\begin{proof}[Proof of Proposition~\ref{prop: Gaussian conv is entire}]
Note that for $z=x+iy\in\C$,
\begin{equation}\label{eq: complex derivative gaussian}
    \left|\frac{d}{dz}\phi(z)\right| = |-2ze^{-z^2}| = |z||e^{-z^2}| = \sqrt{x^2+y^2}e^{-x^2+y^2}.
\end{equation}
Letting $A_{y,\epsilon} \triangleq \R\times (y-\epsilon, y+\epsilon) \subset \C$ for $y\in \R$ and $\epsilon>0$, we see that \eqref{eq: complex derivative gaussian} is bounded for $z\in A_{y,\epsilon}$. Thus it is in particular Lipschitz continuous on $A_{y,\epsilon}$: $\exists C_{y,\epsilon}>0, \forall z,z'\in A_{y,\epsilon}$,
\begin{equation}
\label{eq: Gaussian approx Lipschitz}
        |\phi(z')-\phi(z)|\leq C_{y,\epsilon}|z'-z|.
\end{equation}
So for $z=x+iy\in\C$,
\begin{equation}
          \label{eq: complex derivative convolution}
              \frac{d}{dz}(w_{\omega}* \phi)(z)=\lim_{h\rightarrow0} \int_\T w_{\omega,t}\frac{\phi(z+h-t)-\phi(z-t)}{h}\d t=\int_\T w_{\omega,t}\frac{d}{dz}\phi(z-t)\d t
          \end{equation}
where we moved the limit inside the integral by the dominated convergence theorem, as we have the domination
          \begin{equation}\label{eq: estimate difference ratio}
              \left |w_{\omega,t}\frac{\phi(z+h-t)-\phi(z-t)}{h}\right|\leq |w_{\omega,t}|C_{y,\epsilon},
          \end{equation}
for all $|h|<\epsilon$ by \eqref{eq: Gaussian approx Lipschitz}, and the RHS of \eqref{eq: estimate difference ratio} is integrable in $t$. So \eqref{eq: complex derivative convolution} shows that $w_{\omega}* \phi$ is holomorphic on $\C$, i.e.~it is entire.
\end{proof}

\begin{remark}
    \begin{itemize}
        \item If $\T$ is bounded then $C(\bar \T) \subset L^1(\bar \T)$, so any noise process $w$ with almost surely continuous sample paths becomes almost surely analytic on its domain when convolved with a Gaussian. For separable, stationary Gaussian processes on a compact interval, almost sure integrability and continuity of sample paths are equivalent \cite[Theorem 1]{belyaevLocalPropertiesSample1960}).
        \item If $\T$ is unbounded and the process $w$ is non-zero and stationary, then one should not expect it to have samples in $L^1(\bar \T)$; intuitively, stationarity implies that the sample paths behave the same everywhere, and thus the accumulated non-zero $L^1$ norms on any bounded interval add up to infinity on an unbounded time interval $\T$.  
    \end{itemize}
\end{remark}

\subsection{Analytic approximation of SDEs}
\label{sec: wong zakai}

After approximating many types of noises by analytic ones through Gaussian convolution, it begs the question: how do the initial and resulting approximating SDE relate? This is in essence the content of the Wong-Zakai theorems~\cite{wong} and extensions thereof.

\subsubsection{The Wong-Zakai theorem for white noise}



The Wong-Zakai theorem \cite[Theorem 2.8]{gyongyWongZakaiApproximations2004} says that for a sequence of piece-wise differentiable approximations $W_{\sigma, t}$ to an ($m$-dimensional) standard Wiener process $W_t$, in the sense that for every $T>0$,
\begin{equation}
     \lim _{\sigma \searrow 0} \E\left[\sup_{0 \leq t \leq T}\left|W_t-W_{\sigma, t}\right|^2 \right]= 0,
\end{equation}
the solutions to the corresponding SDE
\begin{align}
\label{eq: smoothed SDE}
    d x_{\sigma, t}&=f\left(x_{\sigma, t}\right) d t+\varsigma d W_{\sigma, t}, \quad x_{\sigma, 0}=z
\end{align}
converge in the following sense
\begin{align}
\label{eq: convergence Wong Zakai theorem}
 \lim _{\sigma \searrow 0} \E\left[\sup _{0 \leq t \leq T}\left|x_t-x_{\sigma, t}\right|^2\right]=0,\quad \forall T>0,
\end{align}
to the solution of the limiting Stratonovich SDE
\begin{align}
\label{eq: limit SDE}
    d x_{t}&=f(x_t) d t+\varsigma d W_{ t}, \quad
x_{0}=z,
\end{align}
assuming that the noise is additive, i.e.~$\varsigma \in \Mat_{d,m}$ is constant. In other words, the limit of the SDE \eqref{eq: smoothed SDE}
 is the Ito/Stratonovich SDE \eqref{eq: limit SDE} with the same drift and volatility.\footnote{Although not considered here, we emphasise that when the noise is \textit{not} additive, the drift of the limiting SDE is different; see \cite[Theorem 2.8]{gyongyWongZakaiApproximations2004},\cite[eq. 7.45 p.497]{watanabeStochasticDifferentialEquations1981}. This is because an additional L\'{e}vy area correction term needs to be taken into account~\cite{PhysRevE.88.062150},\cite[Sec. 11.7.7]{pavliotisMultiscaleMethodsAveraging2010},\cite[Sec. 5.1]{pavliotisStochasticProcessesApplications2014},\cite[Sec. 3.4]{frizCourseRoughPaths2020}. The L\'{e}vy area correction is important to consider as it can drastically change the dynamical and stationary properties of an SDE, such as the structure and nature of its stationary distributions~\cite{DiHoP_23}. \label{footnote: wong}} Crucially, \cite[Theorem 2.8]{gyongyWongZakaiApproximations2004} gives a rate of convergence for \eqref{eq: convergence Wong Zakai theorem}. 
This applies to the preceding section on convolved white noise, whence
\begin{align*}
    \begin{cases}
        \frac{d}{dt} x_{\sigma, t} =f\left(x_{\sigma, t}\right) +\varsigma  \left(\omega *\phi_{\sigma}\right)_t\\
        x_{\sigma, 0}=z
    \end{cases} \xrightarrow{\sigma \searrow 0} \quad\begin{cases}
        \frac{d}{dt} x_t = f(x_t)+\varsigma  \omega_t\\ 
x_{0}=z
    \end{cases} 
\end{align*}
where convergence is pathwise in the sense of \eqref{eq: convergence Wong Zakai theorem} and the Gaussian is $\phi_\sigma(t) = (\sqrt{2\pi} \sigma)^{-1}e^{-\frac{t^2}{2\sigma^2} }$.

All this means that we can use generalised coordinates to analyse stochastic differential equations driven by white noise, by smoothing the noise and analysing the resulting equation. When the flow is non-differentiable this needs to be smoothed as well for the approximating equation to have differentiable solutions. Luckily, there are also Wong-Zakai theorems for smooth approximations of the drift and the noise, e.g. \cite{lingWongZakaiTheoremSDEs2022}. 

Some care must be taken, noting that the radius of convergence of the generalised coordinate solution is expected to tend to zero as one takes the limit of rough noise \eqref{eq: convergence Wong Zakai theorem}. Most likely, one should exploit rates of convergence to the original diffusion equation such as \cite[Theorem 2.8]{gyongyWongZakaiApproximations2004} to relate the smooth generalised coordinate solution (i.e. with small finite $\sigma$) to the diffusion solution in expected sup-norm, in order to make generalised coordinates practically applicable to the study of rough equations.

\subsubsection{The Wong-Zakai theorem for rough paths}

But what about other types of rough noises that might be driving a stochastic differential equation? As there exists a fairly comprehensive theory for analysing SDEs driven by white noise, the appeal of generalised coordinates almost surely lies in offering analysis tools for SDEs driven by more complex and rough noises.

As it turns out, Wong-Zakai approximation theorems for SDEs driven by more complex rough paths have been studied extensively. Such rough processes include semi-martingales \cite{konecnyWongZakaiApproximationStochastic1983,coutinSemimartingalesRoughPaths2005}, fractional Brownian motion \cite{viitasaariStationaryWongZakai2022,scorolliWongZakaiApproximationsQuasilinear2021} and many other Gaussian processes \cite{coutinStochasticAnalysisRough2002}, reversible Markov processes \cite{bassExtendingWongZakaiTheorem2002} as well as Markov processes with uniformly elliptic generator in divergence form \cite{frizUniformlySubellipticOperators2008,lejayStochasticDifferentialEquations2008}. Nowadays, smooth approximations of SDEs driven by rough paths are best understood within the comprehensive theory of rough paths \cite{frizRoughPathLimits2009,frizMultidimensionalStochasticProcesses2010,frizCourseRoughPaths2020}. The idea is that there is a suitable topology on the space of paths (the rough path topology) such that the map sending a noise sample path to the corresponding solution of a stochastic differential equation (the Ito-Lyons map) is continuous \cite[Theorem 8.5]{frizCourseRoughPaths2020}. A general Wong-Zakai approximation theorem analoguous to \eqref{eq: convergence Wong Zakai theorem} then follows trivially from this general result; see \cite{frizRoughPathLimits2009}, \cite[Section 17.4]{frizMultidimensionalStochasticProcesses2010} and \cite[Theorem 9.3]{frizCourseRoughPaths2020}. 

\chapter{Analysis in generalised coordinates}
 \label{chap: 2}

\section{Generalised fluctuations}

To be able to solve the generalised coordinate process $\mathbf x_t$ \eqref{eq: gen Cauchy problem}, we need knowledge about the random fluctuations in generalised coordinates $\mathbf w_t$ \eqref{eq: generalised variables for gen cauchy problem}. In this section we ask: what are the statistics of generalised fluctuations?

\subsection{Second order statistics}
\label{sec: second order statistics}

We consider \emph{wide-sense stationary} noise processes $w$ (i.e.~processes with stationary mean and covariance)\footnote{We discuss how to dispense with the wide-sense stationarity assumption in the Discussion (Section~\ref{sec: future directions}).} that have $(N-1)$-times continuously differentiable sample paths, almost surely.

The noise assumption is generally taken to imply zero-mean $\E[w_t]=0, \forall t\in \T$ (as the mean can be added to the drift), so trivially $\E[\mathbf w_t]=0, \forall t\in \T$.

The autocovariance function
\begin{equation}
\label{eq: autocovariance}
        \kappa(h)\triangleq \E[w_t w^{\top}_{t+ h}]
\end{equation}
is independent of $t$ by the wide-sense stationarity assumption. Interestingly, the second order statistics of $\mathbf w$ can formally be expressed in terms of the serial derivatives of $\kappa$.

\begin{lemma}
\label{lemma: autocov of generalised fluctuations}
Formally, for $n,m =0, \ldots, N-1$
\begin{equation}
\label{eq: autocov n m}
    \E[\rmw^{(n)}_t \rmw^{(m)\top}_{t+h}]= (-1)^n\kappa^{(n+m)}(h)
\end{equation}
where  $\kappa^{(n+m)}(h)\triangleq \frac{d^{n+m}}{dh^{n+m}}\kappa(h)$.
\end{lemma}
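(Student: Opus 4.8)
The statement is about the second-order statistics of the derivative processes $\rmw^{(n)}_t = \frac{d^n}{dt^n} w_t$, so the plan is to differentiate the autocovariance identity $\kappa(h) = \E[w_t w_{t+h}^\top]$ term by term and keep careful track of which variable each derivative acts on. The word \emph{formally} in the lemma statement signals that I need not justify the interchange of expectation and differentiation rigorously (that would require, e.g., mean-square differentiability of $w$, dominated convergence on difference quotients, or a spectral-measure argument); I will simply perform the formal manipulation.

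\textbf{Step 1 (reduce to a computation on $\kappa$).} Write $\rmw^{(n)}_t = \partial_t^n w_t$ and $\rmw^{(m)}_{t+h} = \partial_{t+h}^m w_{t+h}$. Then, pulling the derivatives out of the expectation,
\begin{equation}
\E[\rmw^{(n)}_t \rmw^{(m)\top}_{t+h}] = \partial_s^n \partial_u^m \, \E[w_s w_u^\top]\big|_{s=t,\,u=t+h} = \partial_s^n \partial_u^m \, \kappa(u-s)\big|_{s=t,\,u=t+h},
\end{equation}
using wide-sense stationarity to write $\E[w_s w_u^\top] = \kappa(u-s)$.

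\textbf{Step 2 (chain rule in $s$ and $u$).} Since $\kappa$ is a function of the single variable $h = u - s$, the chain rule gives $\partial_u \kappa(u-s) = \kappa'(u-s)$ and $\partial_s \kappa(u-s) = -\kappa'(u-s)$. Iterating, $\partial_u^m \kappa(u-s) = \kappa^{(m)}(u-s)$ and then $\partial_s^n \kappa^{(m)}(u-s) = (-1)^n \kappa^{(n+m)}(u-s)$. Evaluating at $u - s = h$ yields $(-1)^n \kappa^{(n+m)}(h)$, which is exactly \eqref{eq: autocov n m}. One should also note that the result is independent of $t$, consistent with (indeed inherited from) the wide-sense stationarity assumption, and that $n, m \leq N-1$ ensures all the derivatives invoked exist at the level of sample paths by Assumption~\ref{as: main assumption}.

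\textbf{Main obstacle.} There is no genuine analytic difficulty here; the only subtlety is bookkeeping of signs and of which argument is being differentiated — it is easy to mis-assign the minus signs if one differentiates $\kappa(h)$ directly in $h$ without first expressing it as $\kappa(u-s)$. The interchange of $\E$ and $\partial$ is the one step that is not rigorous, but the lemma explicitly only claims the identity \emph{formally}, so flagging this (e.g. noting it can be made rigorous under mean-square differentiability of $w$, which holds when $\kappa \in C^{2(N-1)}$) suffices.
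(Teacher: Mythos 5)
Your proposal is correct and is essentially the same argument as the paper's: both reduce the claim to moving derivatives of the time arguments through the expectation and tracking the sign $(-1)^n$ that comes from differentiating $\kappa(u-s)$ in its first time argument, with the interchange of limit/derivative and expectation left as the explicitly ``formal'' step (which the paper, like you, notes is made rigorous later under regularity of $\kappa$, cf.\ Assumption~\ref{as: lipschitz differentiable autocov} and Proposition~\ref{prop: autocov generalised GP}). The only cosmetic difference is that the paper writes the derivatives as one-sided difference quotients and proceeds by a double induction (first on $m$, then on $n$), which serves to pinpoint exactly which limit-expectation interchanges are unjustified, whereas you differentiate directly under the expectation via the chain rule.
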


\begin{remark}[Formal calculation]\label{rmk: formal calculation}
    By `formally' we mean that the proof of Lemma~\ref{lemma: autocov of generalised fluctuations} may need additional conditions on $w$ to hold in practice. Specifically, we do not justify the steps for getting the limit outside of the expectation in the first equality in \eqref{eq: permut deriv integral} and \eqref{eq: permut deriv integral 2}. We will give sufficient conditions that make Lemma rigorous for Gaussian process $w$ in Section~\ref{sec: GP noise}.
\end{remark}

\begin{proof}[Proof of Lemma~\ref{lemma: autocov of generalised fluctuations}]
\begin{claim}
    \begin{equation}
    \label{eq: autocov 0 m}
    \E[\rmw^{(0)}_t\rmw^{(m)\top}_{t+h}]= \kappa^{(m)}(h)
\end{equation}
\end{claim}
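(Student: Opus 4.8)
The plan is to prove the claim \eqref{eq: autocov 0 m} by interchanging serial time-derivatives with the expectation, and then to bootstrap from it to the full statement \eqref{eq: autocov n m} of Lemma~\ref{lemma: autocov of generalised fluctuations}.

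First I would unpack the definitions: $\rmw^{(0)}_t = w_t$, and by \eqref{eq: generalised variables for gen cauchy problem} the quantity $\rmw^{(m)}_{t+h}$ is the $m$-th time-derivative of the sample path $s \mapsto w_s$ evaluated at $s = t+h$, i.e.\ $\rmw^{(m)}_{t+h} = \frac{d^m}{dh^m} w_{t+h}$ (the inner derivative of $t+h$ in $h$ being $1$). Writing this $m$-th derivative as an iterated limit of finite-difference quotients of $w_{t+h}$ in the variable $h$, forming the outer product $w_t\,(\rmw^{(m)}_{t+h})^\top$, and taking expectations, the computation reduces to moving the $m$ difference-quotient limits outside the expectation. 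Doing so turns $\E[\rmw^{(0)}_t\rmw^{(m)\top}_{t+h}]$ into $\frac{d^m}{dh^m}\E[w_t w_{t+h}^\top] = \frac{d^m}{dh^m}\kappa(h) = \kappa^{(m)}(h)$, where the middle equality is wide-sense stationarity together with the definition \eqref{eq: autocovariance} of $\kappa$. This is exactly the step flagged as merely formal in Remark~\ref{rmk: formal calculation}, and it is the only nontrivial point; it is left unjustified at this level of generality and will be made rigorous for Gaussian $w$ in Section~\ref{sec: GP noise}.

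To finish the lemma I would then differentiate $n$ more times, now in the first time argument. Setting $g(s,u) \triangleq \E[w_s w_u^\top]$, wide-sense stationarity gives $g(s,u) = \kappa(u-s)$, and the same formal interchange yields $\E[\rmw^{(n)}_t \rmw^{(m)\top}_{t+h}] = \partial_s^n \partial_u^m g(s,u)\big|_{s=t,\,u=t+h}$. Since $\partial_u^m \kappa(u-s) = \kappa^{(m)}(u-s)$ and each $\partial_s$ contributes a factor $-1$ by the chain rule, this equals $(-1)^n \kappa^{(n+m)}(u-s)\big|_{u-s=h} = (-1)^n \kappa^{(n+m)}(h)$, which is \eqref{eq: autocov n m}; taking $n=0$ recovers the claim, so the two are consistent.

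The main obstacle is the one already conceded in Remark~\ref{rmk: formal calculation}: passing the repeated difference-quotient limits through the expectation operator. Under suitable hypotheses — e.g.\ continuity and local integrability of the relevant mixed partial derivatives of $(s,u)\mapsto \E[w_s w_u^\top]$, which for Gaussian $w$ follow from mean-square differentiability of the sample paths — this interchange is justified by dominated convergence applied to the difference quotients; but here I would present the argument as the formal computation it is advertised to be, deferring those hypotheses to Section~\ref{sec: GP noise}.
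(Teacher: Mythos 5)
Your argument is correct at the formal level and is essentially the paper's own route: the paper proves \eqref{eq: autocov 0 m} by induction on $m$, moving a single difference-quotient limit outside the expectation at each step and then invoking stationarity, which is precisely your $m$-fold interchange carried out one derivative at a time, and your handling of the $(-1)^n$ factor by differentiating in the first time argument matches the paper's second induction in $n$. The only nontrivial point---passing the difference-quotient limits through the expectation---is exactly the step the paper also leaves formal (Remark~\ref{rmk: formal calculation}) and makes rigorous only later for Gaussian noise, just as you indicate.
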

\begin{proofclaim}
We prove \eqref{eq: autocov 0 m} by induction on $m$. The case $m=0$ is \eqref{eq: autocovariance}. We show the statement for $m\geq 1$ assuming it holds for $m-1$:
    \begin{equation}
    \label{eq: permut deriv integral}
    \begin{split}
    \E[\rmw^{(0)}_t \rmw^{(m)\top}_{t+h}] &= \lim_{s \to 0}\frac 1 s\E[\rmw_{t}(\rmw^{(m-1)\top}_{t+h+s}-\rmw^{(m-1)\top}_{t+h})]\\
    &= \lim_{s \to 0}\frac 1 s \left( \kappa^{(m-1)}(h+s)-\kappa^{(m-1)}(h)\right) \\
    &= \kappa^{(m)}(h).
    \end{split}
\end{equation}
\end{proofclaim}

    Fix $m\geq 0$. We prove \eqref{eq: autocov n m} by induction on $n$. The case $n=0$ is \eqref{eq: autocov 0 m}. We show the statement for $n\geq 1$ assuming it holds for $n-1$:
    \begin{equation}
    \label{eq: permut deriv integral 2}
    \begin{split}
    \E[\rmw^{(n)}_t \rmw^{(m)\top}_{t+h}]&= \lim_{s \to 0}\frac 1 s\E[(\rmw^{(n-1)}_{t+s}-\rmw^{(n-1)}_t)\rmw^{(m)\top}_{t+h}]\\
    &= \lim_{s \to 0}\frac 1 s \left( \E[\rmw^{(n-1)}_{t+s} \rmw^{(m)\top}_{t+h}]- \E[\rmw^{(n-1)}_{t} \rmw^{(m)\top}_{t+h}]\right)\\ 
    &= (-1)^{n-1}\lim_{s \to 0}\frac 1 s \left( \kappa^{(n-1+m)}(h-s)-\kappa^{(n-1+m)}(h)\right) \\
    &=(-1)^{n}\kappa^{(n+m)}(h).
    \end{split}
\end{equation}
\end{proof}
\eqref{eq: autocov n m} can be evaluated for any $2N-2$ times differentiable autocovariance function, and any $N-1$ times continuously differentiable noise $w$. Furthermore, Lemma~\ref{lemma: autocov of generalised fluctuations} shows that $\mathbf w$ is wide-sense stationary if only if $w$ is wide-sense stationary. Therefore, the autocovariance of $\mathbf w$ is
\begin{equation}
\label{eq: autocovariance generalised fluctuations}
    \E[\mathbf w_t \mathbf w_{t+h}^\top] = \begin{bmatrix} \kappa(h) & \kappa^{(1)}(h) & \kappa^{(2)}(h) & \hdots \\
    -\kappa^{(1)}(h) & -\kappa^{(2)}(h) & -\kappa^{(3)}(h) &\hdots \\
    \kappa^{(2)}(h) & \kappa^{(3)}(h) & \kappa^{(4)}(h) &\hdots \\
    \vdots & \vdots & \vdots & \ddots
    \end{bmatrix}\in \Mat_{Nd,Nd}.
\end{equation}

The covariance of generalised fluctuations has a checkerboard structure, where odd temporal derivatives of $\kappa$ in \eqref{eq: autocovariance generalised fluctuations} are replaced by zero submatrices. To see this, first note that the autocovariance of a wide-sense stationary process $h \mapsto \E[\rmw^{(n)}_t \rmw^{(n)\top}_{t+h}]$ attains a local maximum at $h=0$. 
By Lemma~\ref{lemma: autocov of generalised fluctuations}, this implies that $\kappa^{(2n)}(h)$ attains a local extremum at $h=0$. Thus, $\kappa^{(2n+1)}(0)=0$. We can find this result in another way: namely, by observing that the covariance of generalised fluctuations is symmetric. From \eqref{eq: autocovariance generalised fluctuations},

\begin{equation}
\label{eq: covariance gen fluctuations}
    \E[\mathbf w_t \mathbf w_{t}^\top] = \begin{bmatrix} \kappa(0) & 0 & \kappa^{(2)}(0) & \hdots \\
    0 & -\kappa^{(2)}(0) & 0 &\hdots \\
    \kappa^{(2)}(0) & 0 & \kappa^{(4)}(0) &\hdots \\
    \vdots & \vdots & \vdots & \ddots
    \end{bmatrix} \in \Mat_{Nd,Nd}.
\end{equation}

\begin{example}[Gaussian autocovariance]
\label{eg: Gaussian autocovariance}
Consider the prototypical example of a Gaussian autocovariance $ \kappa(h)=(2\sqrt\pi\sigma)^{-1} e^{- \frac{h^2}{4\sigma^2}}$ \eqref{eq: autocov Gaussian convolved white noise} entailed by
Gaussian convolved white noise \eqref{eq: Gaussian convolved white noise}. By inspection,
\begin{equation*}
    \kappa^{(2n)}(0)=\frac{(-1)^n(2n-1)!!}{2^{n+1}\sqrt\pi \sigma^{2n+1}} , \quad n\in \N,
\end{equation*}
where $!!$ denotes the double factorial. 
In particular, the diagonal of the generalised covariance matrix \eqref{eq: covariance gen fluctuations} reads:
\begin{equation*} 
    \frac{(2n-1)!!}{2^{n+1}\sqrt\pi \sigma^{2n+1}} , \quad n=0,1,2,\ldots, N-1.
\end{equation*}
The white noise limit corresponds to the case $\sigma\searrow 0$, whence orders of motion higher than $n=0$ can be discarded with impunity. 
\end{example}

\begin{example}[Square rational autocovariance]
\label{eg: Square rational autocovariance}
Suppose $\kappa(h)=\frac{1}{1+h^2}$. The Taylor series at the origin reads $\kappa(h)=\sum_{n=0}^\infty (-1)^nh^{2n}$ and converges for $|h|<1$. In particular, $\kappa^{(2n)}(0)=(-1)^n(2n)!$ and the diagonal of the matrix \eqref{eq: covariance gen fluctuations} reads $(2n)!$ for $n=0,1,2,\ldots, N-1$.
\end{example}

\subsection{Gaussian process noise}\label{sec: GP noise}

We consider the preceding section in the context where the driving noise process $w$ is a Gaussian process (GP). In this case, the wide-sense stationarity assumption is equivalent to stationarity; we are thus in the setting of stationary GPs with $(N-1)$-times continuously differentiable sample paths a.s.. It turns out that we can make the calculations of the preceding Section~\ref{sec: second order statistics} rigorous in this special setup.

\subsubsection{Conditions for sample differentiability}

First we must answer: what are the conditions on stationary GPs for the many times differentiability of their sample paths? In practice, Gaussian processes are often described and identified by their autocovariance (a.k.a. kernel) $\kappa$. Assuming zero mean, the regularity of the autocovariance determines the regularity of the GP sample paths almost surely \cite{dacostaSamplePathRegularity2024}. Consider the following assumption:

\begin{assumption}[Regular GP autocovariance]\label{as: lipschitz differentiable autocov}
$w$ is a mean-zero stationary Gaussian process with autocovariance $\kappa$ such that $\kappa^{(2(N-1))} $ exists and is locally Lipschitz continuous.
\end{assumption}

Note that, by the mean value theorem, the existence of one additional continuous derivative of $\kappa$ is sufficient for local Lipschitz continuity of the lower derivatives, and hence for Assumption~\ref{as: lipschitz differentiable autocov}. We have \cite[Corollary 1]{dacostaSamplePathRegularity2024}:

\begin{proposition}[Sample differentiability of GPs]
\label{prop: Sample differentiability of GPs}
Under Assumption~\ref{as: lipschitz differentiable autocov}, $w$ has $(N-1)$-times continuously differentiable sample paths almost surely. In other words, Assumption~\ref{as: lipschitz differentiable autocov} implies Assumption~\ref{as: main assumption}.
\end{proposition}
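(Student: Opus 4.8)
The plan is to reduce the many-times differentiability of the sample paths to the classical Kolmogorov–Chentsov / Garsia–Rodemich–Rumsey machinery for Gaussian processes, keeping track of how derivatives of $\kappa$ control the regularity of successive derivatives of $w$. Concretely, I would argue by induction on the order of differentiation: suppose $w$ has a continuous (indeed $k$-times continuously differentiable) version for some $0 \le k \le N-2$; I want to produce a continuous version of the $(k+1)$-st derivative. The natural candidate for $\frac{d^{k+1}}{dt^{k+1}} w_t$ is the $L^2$-derivative (mean-square derivative), whose existence is governed precisely by the differentiability of $\kappa$: the mean-square derivative of order $j$ exists iff $\kappa^{(2j)}(0)$ exists, and in that case the process $w^{(j)}$ is again a stationary Gaussian process with autocovariance $(-1)^j \kappa^{(2j)}$ — which is exactly the formal computation of \cref{lemma: autocov of generalised fluctuations}, now made rigorous because the limits are $L^2$-limits of jointly Gaussian families and hence interchange with expectations. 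So under \cref{as: lipschitz differentiable autocov}, all mean-square derivatives up to order $N-1$ exist and $w^{(N-1)}$ is stationary Gaussian with autocovariance $(-1)^{N-1}\kappa^{(2(N-1))}$.

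The remaining work is to upgrade these mean-square derivatives to genuine pathwise derivatives with continuous sample paths, and this is where the hypothesis that $\kappa^{(2(N-1))}$ is locally Lipschitz gets used. First I would establish sample-path continuity of $w^{(N-1)}$: for a stationary Gaussian process the increment variance is $\E\big[(w^{(N-1)}_{t+h}-w^{(N-1)}_t)^2\big] = 2\big((-1)^{N-1}\kappa^{(2(N-1))}(0) - (-1)^{N-1}\kappa^{(2(N-1))}(h)\big)$, and local Lipschitzness of $\kappa^{(2(N-1))}$ bounds this by $C|h|$ near $h=0$; since increments are Gaussian, all their moments are controlled, so Kolmogorov's continuity criterion applies and yields a continuous (in fact locally $\alpha$-H\"older for any $\alpha<1/2$) modification of $w^{(N-1)}$. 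Then I would argue that the pathwise integral $t \mapsto w^{(N-2)}_0 + \int_0^t w^{(N-1)}_s\,ds$ (using a jointly measurable, continuous version) agrees almost surely, for each fixed $t$, with $w^{(N-2)}_t$ — this is a Fubini-type argument comparing $L^2$-limits against pathwise Riemann integrals of the continuous version — and hence the two processes are indistinguishable. Descending the ladder, each $w^{(j)}$ has a continuously differentiable version with derivative $w^{(j+1)}$, so $w$ has a version in $C^{N-1}(\T,\R^d)$ a.s., which is precisely \cref{as: main assumption}.

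The main obstacle I anticipate is the bookkeeping needed to pass from \emph{mean-square} derivatives to \emph{sample-path} derivatives rigorously: one must be careful that "derivative in $L^2$" plus "continuous modification of that $L^2$-derivative" really does give a version of the original process that is classically differentiable along almost every path — the subtlety is the usual one of choosing a single null set working simultaneously for all $t$, handled by the fundamental-theorem-of-calculus / indistinguishability argument above rather than a naive pointwise statement. A secondary point is that all of this is stated for scalar-valued $w$; for the $\R^d$-valued case one simply applies the argument componentwise, noting that \cref{as: lipschitz differentiable autocov} is a condition on the matrix-valued $\kappa$ so each entry inherits the hypothesis. Since the cited \cite[Corollary 1]{dacostaSamplePathRegularity2024} presumably packages exactly this chain of reasoning, in the write-up I would likely state the reduction to mean-square derivatives and the Kolmogorov-criterion step explicitly and defer the indistinguishability details to that reference.
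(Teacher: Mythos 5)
Your proposal is correct in outline, but it takes a genuinely different route from the paper, for the simple reason that the paper supplies no argument of its own here: Proposition~\ref{prop: Sample differentiability of GPs} is stated as an import, namely \cite[Corollary 1]{dacostaSamplePathRegularity2024}, with the surrounding text only remarking that this is sharper than routes through Sobolev embeddings. What you do instead is reconstruct a self-contained classical (Cram\'er--Leadbetter-style) proof: existence of $\kappa^{(2j)}$ up to $j=N-1$ gives mean-square derivatives $w^{(j)}$, which are again stationary Gaussian with autocovariance $(-1)^{j}\kappa^{(2j)}$ (this also rigorises Lemma~\ref{lemma: autocov of generalised fluctuations}, as the paper does via Proposition~\ref{prop: autocov generalised GP}); local Lipschitzness of $\kappa^{(2(N-1))}$ bounds the increment variance of $w^{(N-1)}$ by $C|h|$, so Gaussian moment bounds plus Kolmogorov--Chentsov yield a continuous modification; and the fundamental-theorem-of-calculus/indistinguishability descent converts mean-square derivatives into pathwise ones, giving a $C^{N-1}$ version of $w$, componentwise in $\R^d$. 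This chain is sound, and your identification of the delicate step (a single null set working for all $t$, handled by the indistinguishability of $w^{(j)}_t$ and $w^{(j)}_0+\int_0^t w^{(j+1)}_s\,\dd s$) is exactly right. Two small caveats: the conclusion obtained this way (and the only reasonable reading of the proposition and of Assumption~\ref{as: main assumption}) concerns a modification of $w$, which you flag; and the claim that the order-$j$ mean-square derivative exists \emph{iff} $\kappa^{(2j)}(0)$ exists should be read as the sufficient direction (the exact criterion involves a generalised symmetric second derivative), which is harmless since Assumption~\ref{as: lipschitz differentiable autocov} gives much more. The trade-off between the two routes: yours buys transparency and self-containedness from standard tools, at the cost of delivering only a sufficient condition with some slack (Lipschitz continuity of $\kappa^{(2(N-1))}$ is used to get H\"older-$\alpha$, $\alpha<1/2$, regularity of the top derivative where mere continuity would do); the paper's citation buys the sharper packaged statements of the reference, e.g.\ the equivalence invoked later (\cite[Corollary 13]{dacostaSamplePathRegularity2024} under Assumption~\ref{as: smooth sample paths}), which a one-directional argument like yours does not provide.
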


Proposition~\ref{prop: Sample differentiability of GPs} is sharper than alternative results in the existing literature, that, unlike this, derive the regularity conditions on the autocovariance via sample path regularity in Sobolev spaces and the Sobolev embedding theorems, which requires more derivatives \cite{scheuerer_regularity_2010}. 

In addition Assumption~\ref{as: lipschitz differentiable autocov} implies a rigorous proof of Lemma~\ref{lemma: autocov of generalised fluctuations} (see Remark~\ref{rmk: formal calculation}) following the proof of the forward implication of \cite[Theorem 7]{dacostaSamplePathRegularity2024}. So:
\begin{proposition}
\label{prop: autocov generalised GP}
    Under Assumption~\ref{as: lipschitz differentiable autocov}, 
    \begin{equation}
    \E[\rmw^{(n)}_t \rmw^{(m)\top}_{t+h}]= (-1)^n\kappa^{(n+m)}(h).
\end{equation}
\end{proposition}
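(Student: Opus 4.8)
The plan is to reinterpret every formal limit in the proof of \cref{lemma: autocov of generalised fluctuations} as a limit in $L^2(\Omega)$, after which exchanging limit and expectation becomes automatic. First I would record that, under \cref{as: lipschitz differentiable autocov}, the process $w$ is $(N-1)$-times mean-square differentiable: for a mean-zero stationary Gaussian process this holds as soon as $\kappa^{(2(N-1))}(0)$ exists (the classical mean-square differentiation criterion for stationary processes), which is granted here, yielding mean-square derivatives $t\mapsto \rmw^{(n)}_t \in L^2(\Omega;\R^d)$ for $n=0,\dots,N-1$. Moreover these mean-square derivatives agree, for each $t$, almost surely with the pathwise derivatives of \cref{prop: Sample differentiability of GPs}; this identification is the content of the forward implication of \cite[Theorem 7]{dacostaSamplePathRegularity2024}, and it is the point where the local Lipschitz hypothesis of \cref{as: lipschitz differentiable autocov} (rather than mere existence of $\kappa^{(2(N-1))}$) is used. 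Consequently, the entries of $\mathbf w_t$ in the statement may be treated as genuine $L^2$ objects, each obtained as an $L^2$-limit of difference quotients.

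The second ingredient is the elementary fact that the bilinear map $(X,Y)\mapsto \E[XY^\top]$ is jointly continuous on $L^2(\Omega;\R^d)$: if $X_s\to X$ and $Y_s\to Y$ in $L^2$, then $\E[X_sY_s^\top]\to \E[XY^\top]$, by Cauchy--Schwarz applied entrywise. Hence, whenever one factor of a covariance is written as an $L^2$-limit of difference quotients and the other factor is held fixed in $L^2$, the limit passes through the expectation. This is precisely the step flagged as merely formal in \cref{rmk: formal calculation}.

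With these in hand I would repeat the two inductions of the proof of \cref{lemma: autocov of generalised fluctuations} verbatim, now rigorously. For the inner claim $\E[\rmw^{(0)}_t\rmw^{(m)\top}_{t+h}]=\kappa^{(m)}(h)$, induct on $m$: the difference quotient $s^{-1}(\rmw^{(m-1)}_{t+h+s}-\rmw^{(m-1)}_{t+h})$ converges to $\rmw^{(m)}_{t+h}$ in $L^2$ by mean-square differentiability, and pairing it with the fixed vector $w_t\in L^2$ and using joint continuity gives $\E[w_t\,\rmw^{(m)\top}_{t+h}]=\lim_{s\to 0}s^{-1}(\kappa^{(m-1)}(h+s)-\kappa^{(m-1)}(h))=\kappa^{(m)}(h)$, the last derivative existing because $m\le N-1\le 2(N-1)$ and $\kappa^{(2(N-1))}$ exists. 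Then, for fixed $m$, induct on $n$ in the same way, pairing the $L^2$-convergent difference quotient $s^{-1}(\rmw^{(n-1)}_{t+s}-\rmw^{(n-1)}_t)\to \rmw^{(n)}_t$ with the fixed vector $\rmw^{(m)}_{t+h}\in L^2$; the alternating sign $(-1)^n$ accumulates exactly as in the formal computation, since differentiating in the first time argument evaluates $\kappa^{(n-1+m)}$ at $h-s$, introducing one factor of $-1$ per step.

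I expect the only genuine obstacle to be the first paragraph: establishing that the mean-square derivatives up to order $N-1$ exist and coincide almost surely with the pathwise ones. Existence of the $L^2$ derivatives is classical and uses only $\kappa^{(2(N-1))}(0)$; the a.s.\ identification with pathwise derivatives is where one must invoke \cite{dacostaSamplePathRegularity2024} and the local Lipschitz continuity in \cref{as: lipschitz differentiable autocov}. Once this bridge is in place, the rest is the formal calculation of \cref{lemma: autocov of generalised fluctuations} with each limit read in $L^2$, which is then fully justified.
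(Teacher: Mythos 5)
Your proposal is correct and follows essentially the same route as the paper: the paper's proof consists precisely of delegating the rigorisation of Lemma~\ref{lemma: autocov of generalised fluctuations} to the forward implication of \cite[Theorem 7]{dacostaSamplePathRegularity2024}, i.e.\ the identification of pathwise with mean-square derivatives under Assumption~\ref{as: lipschitz differentiable autocov}, after which the covariance identity is the standard mean-square (i.e.\ $L^2$) calculus computation you spell out. Your write-up simply makes explicit the $L^2$-limit and Cauchy--Schwarz continuity steps that the cited proof supplies, so there is no substantive difference in approach.
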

It follows that the calculations of the preceding section~\ref{sec: second order statistics} are made rigorous under Assumption~\ref{as: lipschitz differentiable autocov}.

\subsubsection{Gaussianity of generalised fluctuations}

Now that we have established sufficient conditions on the autocovariance $\kappa$ for many times differentiability of the sample paths, we look at the structure of the generalised GP fluctuations $\mathbf w$.


\begin{proposition}
\label{prop: serial derivatives of GP are GP}
    Consider a mean-zero $\R^d$-valued Gaussian process $w$ with almost surely $(N-1)$-times continuously differentiable sample paths. Then, $\mathbf w$
    is an $\R^{dN}$-valued Gaussian process. 
\end{proposition}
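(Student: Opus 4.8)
Recall that $\mathbf w$ is, by definition, an $\R^{dN}$-valued Gaussian process exactly when for every finite collection of times $t_1,\ldots,t_k\in\T$ the random vector $V\triangleq(\mathbf w_{t_1},\ldots,\mathbf w_{t_k})\in\R^{dNk}$ is jointly (possibly degenerately) Gaussian. So the plan is to fix such times, realise $V$ as an almost sure limit of vectors that are \emph{manifestly} Gaussian, and then invoke the standard fact (via L\'evy's continuity theorem, as already used for Fact~\ref{fact: gaussian rvs}) that a distributional limit of Gaussians is Gaussian.

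First I would express each serial derivative as an iterated difference quotient of $w$. For $0\le n\le N-1$ and $h\ne 0$ set $\Delta_h^n w_t\triangleq h^{-n}\sum_{i=0}^{n}\binom{n}{i}(-1)^{n-i}w_{t+ih}$, the normalised $n$-th forward difference. Since each sample path of $w$ is $C^{N-1}$ and $n\le N-1$, we have $\Delta_h^n w_{t_j}\xrightarrow{h\to0}\rmw^{(n)}_{t_j}$ almost surely, for every $j$ and every admissible $n$. Collecting these, let $V_h$ be the vector obtained from $V$ by replacing each entry $\rmw^{(n)}_{t_j}$ with $\Delta_h^n w_{t_j}$; then $V_h\to V$ almost surely as $h\to0$ (along any sequence).

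Next, for each fixed $h$ the vector $V_h$ is a deterministic linear image of $(w_s)_{s\in S_h}$, where $S_h\triangleq\{t_j+ih: 1\le j\le k,\ 0\le i\le N-1\}$ is a finite set of times. Because $w$ is a Gaussian process, $(w_s)_{s\in S_h}$ is jointly Gaussian, hence so is its linear image $V_h$, and $\E[V_h]=0$ since $\E[w_s]=0$; thus $V_h\sim\mathcal N(0,\Sigma_h)$ for some symmetric positive semi-definite $\Sigma_h$, with characteristic function $\xi\mapsto\exp(-\tfrac12\xi^\top\Sigma_h\xi)$. Almost sure convergence gives convergence in distribution, so these characteristic functions converge pointwise to the characteristic function of $V$. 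Hence $\xi^\top\Sigma_h\xi$ converges for each $\xi$, and by polarisation $\Sigma_h$ converges entrywise to a symmetric positive semi-definite $\Sigma$; consequently the characteristic function of $V$ equals $\exp(-\tfrac12\xi^\top\Sigma\xi)$, i.e.\ $V\sim\mathcal N(0,\Sigma)$. As $t_1,\ldots,t_k$ were arbitrary, $\mathbf w$ is a mean-zero $\R^{dN}$-valued Gaussian process.

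There is no deep obstacle here: the $C^{N-1}$ hypothesis is used exactly to guarantee the almost sure convergence of the iterated difference quotients, and the only mild subtlety is that the limiting covariance $\Sigma$ may be singular, so the marginals are allowed to be \emph{degenerate} Gaussians — which is harmless. (Alternatively, one may skip the polarisation step and simply quote the entrywise convergence $\Sigma_h\to\Sigma$, together with the explicit block form of $\Sigma$, from Lemma~\ref{lemma: autocov of generalised fluctuations} / Proposition~\ref{prop: autocov generalised GP}.)
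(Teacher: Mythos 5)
Your proof is correct and follows essentially the same route as the paper's: realise the generalised coordinates at finitely many times as almost sure limits of difference quotients, which are linear (hence jointly Gaussian) images of finitely many values of $w$, and conclude via the fact that distributional limits of Gaussian vectors are Gaussian. The only difference is organisational — you use $n$-th iterated forward differences of $w$ in one step, whereas the paper inducts on the derivative order using first-order difference quotients of $\rmw^{(k)}$ — and your explicit characteristic-function/polarisation argument just spells out the closure fact the paper cites.
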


\begin{proof}
        Consider the process $\rmw^{(0:n)}\triangleq (\rmw^{(0)}, \ldots, \rmw^{(n)})$ for any $0\leq n\leq N-1$: we show that this is an $\R^{dn}$-valued Gaussian process by induction on $n$. The case $n=0$ holds by assumption. Suppose that the statement holds for $n=k$ for some arbitrary $k <N-1$. We show it for $n=k+1$. Consider some arbitrary finite set of times: $m\in \N, t_0,\ldots t_m\in \T$. For any $\e>0$
    \begin{align}
    \label{eq: induction derivatives GP}
       &\left(\rmw_{t_0}^{(0)},\ldots, \rmw_{t_m}^{(0)},\ldots, \rmw_{t_0}^{(k)},\ldots, \rmw_{t_m}^{(k)}, \frac{\rmw_{t_0+\e}^{(k)}-\rmw_{t_0}^{(k)}}{\e},\ldots, \frac{\rmw_{t_m+\e}^{(k)}-\rmw_{t_m}^{(k)}}{\e}\right) 
    \end{align}
    is joint Gaussian. Indeed, \eqref{eq: induction derivatives GP} is a linear transformation of the random variable
$\left(\rmw_{t_0}^{(0)},\ldots, \rmw_{t_m}^{(0)},\ldots, \rmw_{t_0}^{(k)},\ldots, \rmw_{t_m}^{(k)}\right)$, which is multivariate Gaussian by our induction hypothesis. \eqref{eq: induction derivatives GP} converges almost surely to 
    \begin{align}
    \label{eq: induction derivatives GP 2}
        \left(\rmw_{t_0}^{(0)},\ldots, \rmw_{t_m}^{(0)},\ldots, \rmw_{t_0}^{(k)},\ldots, \rmw_{t_{m}}^{(k)}, \rmw_{t_0}^{(k+1)},\ldots, \rmw_{t_{m}}^{(k+1)}\right)
    \end{align}
as $\e \to 0$. Recall that almost sure convergence implies convergence in distribution (a.k.a. weak convergence). 
Therefore, we deduce that \eqref{eq: induction derivatives GP 2} is joint Gaussian \cite{mse1,mse2}. Since $t_0,\ldots t_m$ are arbitrary, $\mathbf w$ is a Gaussian process. 
\end{proof}

It follows that the generalised GP fluctuations $\mathbf w_t$ have a simple Gaussian form that is constant in time:

\begin{theorem}
    Under Assumption~\ref{as: lipschitz differentiable autocov}, the generalised GP fluctuations have the distribution
    \begin{equation}
        \mathbf w_t\sim\mathcal N(0, \mathbf \Sigma), \:\forall t\in \T,
    \end{equation}
    with covariance $\mathbf \Sigma$ given by \eqref{eq: covariance gen fluctuations}.
\end{theorem}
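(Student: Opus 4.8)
The statement is a synthesis of the results already established in this section, so the plan is to assemble them in the right order. Recall that the law of an $\R^{dN}$-valued Gaussian process is determined entirely by its mean and covariance functions; hence, to identify the single-time marginal $\mathbf w_t$ it suffices to verify three things: that $\mathbf w_t$ is a Gaussian vector, that $\E[\mathbf w_t]=0$, and that $\E[\mathbf w_t\mathbf w_t^\top]=\mathbf \Sigma$ with $\mathbf \Sigma$ as in \eqref{eq: covariance gen fluctuations}.

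First I would invoke \cref{prop: Sample differentiability of GPs}: Assumption~\ref{as: lipschitz differentiable autocov} implies Assumption~\ref{as: main assumption}, so $w$ has almost surely $(N-1)$-times continuously differentiable sample paths and $\mathbf w$ is well-defined. Then \cref{prop: serial derivatives of GP are GP} applies and yields that $\mathbf w$ is an $\R^{dN}$-valued Gaussian process; in particular every finite-dimensional marginal, and a fortiori $\mathbf w_t$ for each fixed $t$, is jointly normal. For the mean, the standing noise assumption gives $\E[w_t]=0$ for all $t$; since each generalised coordinate $\rmw^{(n)}_t$ arises as a limit of difference quotients of $w$ and these limits may be taken inside the expectation under Assumption~\ref{as: lipschitz differentiable autocov} (this is the content that makes \cref{prop: autocov generalised GP} rigorous, following \cref{lemma: autocov of generalised fluctuations}), we get $\E[\rmw^{(n)}_t]=0$ and hence $\E[\mathbf w_t]=0$.

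It remains to compute the covariance. I would apply \cref{prop: autocov generalised GP} at lag $h=0$ — legitimate precisely because Assumption~\ref{as: lipschitz differentiable autocov} holds — to obtain $\E[\rmw^{(n)}_t\rmw^{(m)\top}_t]=(-1)^n\kappa^{(n+m)}(0)$ for $n,m=0,\dots,N-1$. To recover exactly the checkerboard form \eqref{eq: covariance gen fluctuations}, I then use the observation already recorded after \eqref{eq: autocovariance generalised fluctuations}: the scalar map $h\mapsto\E[\rmw^{(k)}_t\rmw^{(k)\top}_{t+h}]=(-1)^k\kappa^{(2k)}(h)$ attains a local extremum at $h=0$ (equivalently, $\E[\mathbf w_t\mathbf w_t^\top]$ must be symmetric), which forces $\kappa^{(2k+1)}(0)=0$ for every $k$; hence every block with $n+m$ odd vanishes and the surviving blocks are $(-1)^n\kappa^{(n+m)}(0)$, which is exactly $\mathbf \Sigma$. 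Since none of these computations involves the base time $t$ — this is the wide-sense stationarity inherited by $\mathbf w$ from $w$ through \cref{lemma: autocov of generalised fluctuations} — the conclusion $\mathbf w_t\sim\mathcal N(0,\mathbf \Sigma)$ holds for every $t\in\T$.

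There is no serious obstacle here: the only point requiring care is that everything rests on \cref{prop: autocov generalised GP} (hence on \cref{prop: serial derivatives of GP are GP}), i.e. on Assumption~\ref{as: lipschitz differentiable autocov} being strong enough both to make sample paths $(N-1)$-times differentiable and to justify exchanging limits with expectations; since that assumption is precisely the hypothesis of the theorem, nothing further is needed. One minor subtlety worth a sentence in the write-up is the meaning of ``$\mathbf w_t$ is Gaussian'' when $N=\infty$, namely that each of its finite-dimensional projections is jointly normal, which is exactly what \cref{prop: serial derivatives of GP are GP} delivers.
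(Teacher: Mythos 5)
Your proposal is correct and follows essentially the same route as the paper: the paper's proof likewise combines Proposition~\ref{prop: autocov generalised GP} (which rigorously justifies the zero-mean and covariance calculations of Section~\ref{sec: second order statistics}, including the checkerboard form \eqref{eq: covariance gen fluctuations}) with Proposition~\ref{prop: serial derivatives of GP are GP} (via Proposition~\ref{prop: Sample differentiability of GPs}) for the Gaussianity of $\mathbf w_t$. Your write-up merely spells out these steps in more detail than the paper does, with no difference in substance.
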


\begin{proof}
Under Assumption~\ref{as: lipschitz differentiable autocov}, Proposition~\ref{prop: autocov generalised GP} justifies all the calculations of Section~\ref{sec: second order statistics}, and in particular the covariance \eqref{eq: covariance gen fluctuations} and zero-mean. Furthermore, Assumption~\ref{as: lipschitz differentiable autocov} implies Proposition~\ref{prop: serial derivatives of GP are GP}, which implies that $ \mathbf w_t$ is a multivariate Gaussian random variable. 
\end{proof}

\section{Analysis of the linear equation}

In this section, we analyse the linear SDE driven by a stationary Gaussian process $w$ using generalised coordinates (i.e.~\eqref{eq: SDE} with linear flow $f$)
\begin{equation}
\label{eq: linear SDE}
   \frac{d}{dt}x_t= Ax + 
   w_t, \quad A\in\Mat_{d,d},\quad   x_0=z \in \R^d.
\end{equation}

\subsection{Many times differentiable fluctuations}

We operate under sufficient regularity on the noise autocovariance---i.e.~Assumption~\ref{as: lipschitz differentiable autocov}---which implies a.s. many times differentiability of sample paths: i.e.~Assumption~\ref{as: main assumption}, by Proposition~\ref{prop: Sample differentiability of GPs}.

The algebraic equation for relating the various derivatives of \eqref{eq: linear SDE} at $t=0$ equals \eqref{eq: gen Cauchy problem} 
\begin{equation}
    \begin{split}
        \rmx_0^{(1)} &= A\rmx_0^{(0)}+\rmw_0^{(0)}
        \\
        \rmx_0^{(2)} &= A\rmx_0^{(1)}+\rmw_0^{(1)}\\
                        &\vdots \\
        \rmx_0^{(N)} &= A\rmx_0^{(N-1)}+\rmw_0^{(N-1)}\\   
    \end{split}
\end{equation}
By substitution the generalised coordinates at $t=0$ equal
\begin{equation}\label{eq: recursion linear eq}
    \rmx_0^{(n)}=A^n \rmx_0^{(0)}+\sum_{k=0}^{n-1} A^{n-1-k} \rmw_0^{(k)}, \quad \forall 1\leq n \leq N.
\end{equation}
The deterministic initial condition $\rmx_0^{(0)}=z$ yields that the vector of generalised coordinates $\mathbf x_0$ is joint Gaussian (as an affine transform of the multivariate Gaussian random variable $\mathbf w_0$).

Thus the zero-th order generalised coordinate process
\begin{equation}
\label{eq: taylor approximation linear eq}
   \rmx_t^{(0)} = \sum_{n=0}^N \rmx^{(n)}_0 \frac{t^n}{n!},
\end{equation}
which by construction approximates the solution to \eqref{eq: linear SDE} is a Gaussian process.

\begin{proposition}
\label{prop: GP finite Taylor approx}
    Under Assumption~\ref{as: lipschitz differentiable autocov}, the Gaussian process \eqref{eq: taylor approximation linear eq} has mean and autocovariance
    \begin{equation}\label{eq: mean and cov linear eq}
    \begin{split}
        \E\left[\rmx_t^{(0)}\right] & = \sum_{n=0}^{N}A^nx_0\frac{t^n}{n!}, \quad
        \operatorname{cov}(\rmx_t^{(0)},\rmx_s^{(0)}) = \sum_{n,m=1}^{N} \frac{t^ns^m}{n!m!}\sum_{k=0}^{n-1}\sum_{l=0}^{m-1}A^{n-1-k}(-1)^k\kappa^{(k+l)}(0)(A^{m-1-l})^\top.
    \end{split}
\end{equation}
\end{proposition}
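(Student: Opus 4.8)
The plan is to read both moments directly off the explicit representation \eqref{eq: recursion linear eq}--\eqref{eq: taylor approximation linear eq}, using the second-order structure of the generalised fluctuations at the time origin supplied by Proposition~\ref{prop: autocov generalised GP}. Since the discussion preceding the statement has already established that $\rmx_t^{(0)}$ is a Gaussian process, it suffices to compute its mean and autocovariance; these then pin down the law.

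First I would compute the mean. Taking expectations in \eqref{eq: taylor approximation linear eq} and using linearity, together with $\E[\rmw_0^{(k)}]=0$ for all $k$ (the driving GP is mean-zero under Assumption~\ref{as: lipschitz differentiable autocov}) and the deterministic initial condition $\rmx_0^{(0)}=x_0$, the recursion \eqref{eq: recursion linear eq} gives $\E[\rmx_0^{(n)}]=A^n x_0$, and hence $\E[\rmx_t^{(0)}]=\sum_{n=0}^{N} A^n x_0\, t^n/n!$. Every sum here is finite (or, in the case $N=\infty$, absolutely convergent by the same estimates used below), so the exchange of expectation and summation is immediate.

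Next I would compute the autocovariance. Subtracting the mean from \eqref{eq: taylor approximation linear eq} and inserting \eqref{eq: recursion linear eq}, the centred process is $\rmx_t^{(0)}-\E[\rmx_t^{(0)}]=\sum_{n=1}^{N}\tfrac{t^n}{n!}\sum_{k=0}^{n-1}A^{n-1-k}\rmw_0^{(k)}$; the $n=0$ term and each $A^n x_0$ contribution are deterministic and cancel, which is precisely why the outer sums start at $1$. Forming $\E[(\rmx_t^{(0)}-\E\rmx_t^{(0)})(\rmx_s^{(0)}-\E\rmx_s^{(0)})^\top]$, expanding by bilinearity, pulling the matrices $A^{n-1-k}$ and $(A^{m-1-l})^\top$ out of the expectation, and substituting the generalised-fluctuation covariance $\E[\rmw_0^{(k)}\rmw_0^{(l)\top}]=(-1)^k\kappa^{(k+l)}(0)$ from Proposition~\ref{prop: autocov generalised GP} (the $h=0$ instance of Lemma~\ref{lemma: autocov of generalised fluctuations}) yields exactly the expression in \eqref{eq: mean and cov linear eq}.

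The argument is essentially bookkeeping, and I do not expect a genuine obstacle. The only points needing a little care are: (i) correctly propagating the sign $(-1)^k$ and the index ranges $0\le k\le n-1$, $0\le l\le m-1$ through the double expansion of the bilinear form; and (ii) if one admits $N=\infty$, checking that the resulting series converge so that the interchange of $\E$ with the (now infinite) sums is legitimate — this follows from the geometric growth of $\|A^j\|$ together with the control on $\kappa^{(k+l)}(0)$ for $k+l\le 2(N-1)$ afforded by Assumption~\ref{as: lipschitz differentiable autocov}. For finite $N$ there is nothing to verify.
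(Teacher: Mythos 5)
Your proposal is correct and follows essentially the same route as the paper: take expectations through the recursion \eqref{eq: recursion linear eq} for the mean, then expand the centred bilinear form and substitute $\E[\rmw_0^{(k)}\rmw_0^{(l)\top}]=(-1)^k\kappa^{(k+l)}(0)$ from Proposition~\ref{prop: autocov generalised GP} for the autocovariance. One aside: your remark about $N=\infty$ is not needed here (the proposition concerns the finite-order sums), and the convergence you invoke would not follow from Assumption~\ref{as: lipschitz differentiable autocov} alone, since controlling the growth of $\kappa^{(k+l)}(0)$ requires the analyticity the paper only introduces later via Assumption~\ref{as: analytic autocov}.
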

\begin{proof}
Using \eqref{eq: recursion linear eq} at $t=0$ we have
\begin{equation}
    \E[\rmx_t^{(0)}] = \E\left[\sum_{n=0}^{N}\rmx_0^{(n)}\frac{t^n}{n!}\right] = \E\left[\sum_{n=0}^{N}\left(A^nx_0+\sum_{k=0}^{n-1}A^{n-1-k}\rmw_0^{(k)}\right)\frac{t^n}{n!}\right] = \sum_{n=0}^{N}A^nx_0\frac{t^n}{n!}.
\end{equation}
Furthermore,
\begin{equation}
    \begin{split}
        \operatorname{cov}(\rmx_t^{(0)},\rmx_s^{(0)}) &=\E\left[\left(\sum_{n=0}^{N}\rmx_0^{(n)}\frac{t^n}{n!}-\sum_{n=0}^{N}A^nx_0\frac{t^n}{n!}\right)\left(\sum_{n=0}^{N}\rmx_0^{(n)}\frac{s^n}{n!}-\sum_{n=0}^{N}A^nx_0\frac{t^n}{n!}\right)^\top\right] \\
&=\E\left[\sum_{n=1}^{N}\frac{t^n}{n!}\sum_{k=0}^{n-1}A^{n-1-k}\rmw_0^{(k)}\left(\sum_{n=1}^{N}\frac{s^n}{n!}\sum_{k=0}^{n-1}A^{n-1-k}\rmw_0^{(k)}\right)^\top\right] \\
&= \sum_{n,m=1}^{N} \frac{t^ns^m}{n!m!}\sum_{k=0}^{n-1}\sum_{l=0}^{m-1}A^{n-1-k}\E\left[\rmw_0^{(k)}\rmw_0^{(l)\top}\right](A^{m-1-l})^\top \\
&= \sum_{n,m=1}^{N} \frac{t^ns^m}{n!m!}\sum_{k=0}^{n-1}\sum_{l=0}^{m-1}A^{n-1-k}(-1)^k\kappa^{(k+l)}(0)(A^{m-1-l})^\top
    \end{split}
\end{equation}
where we used Proposition~\ref{prop: autocov generalised GP} for the last line.
\end{proof}

\subsection{Smooth and analytic fluctuations}
\label{sec: smooth and analytic fluctuations}

Now we make an assumption that is strictly stronger than Assumption~\ref{as: lipschitz differentiable autocov}.
\begin{assumption}
\label{as: smooth sample paths}
$w$ is a mean-zero stationary Gaussian process with smooth autocovariance $\kappa$.
\end{assumption}

By \cite[Corollary 13]{dacostaSamplePathRegularity2024}, Assumption~\ref{as: smooth sample paths} is equivalent to $w$ having smooth sample paths, a stronger statement than Proposition~\ref{prop: Sample differentiability of GPs} which solely yields the forward implication. We make this assumption so that we can consider the zero-th order coordinate process $\rmx^{(0)}$ \eqref{eq: taylor approximation linear eq} as $N\to \infty$.

\begin{proposition}\label{lemma: mean convergence}
Under Assumption~\ref{as: smooth sample paths}, the mean of the zero-th coordinate process $\rmx^{(0)}$ \eqref{eq: taylor approximation linear eq} has the limit
    \begin{equation}
    \label{eq: limit mean}
    \E\left[\rmx_t^{(0)}\right] \xrightarrow{N\to \infty} e^{At}x_0.
    \end{equation}
\end{proposition}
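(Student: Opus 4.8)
The plan is to derive the limit directly from Proposition~\ref{prop: GP finite Taylor approx}, since the mean of the zero-th order coordinate process is, for every truncation order $N$, nothing but a partial sum of the matrix exponential series.

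First I would record that Assumption~\ref{as: smooth sample paths} is precisely what makes the statement meaningful: by \cite[Corollary 13]{dacostaSamplePathRegularity2024}, a smooth stationary autocovariance forces $w$ to have smooth sample paths almost surely, so Assumption~\ref{as: main assumption}---and hence also the weaker Assumption~\ref{as: lipschitz differentiable autocov}---holds for \emph{every} order $N \in \N$. Thus for each $N$ the process $\rmx^{(0)}$ in \eqref{eq: taylor approximation linear eq} is well-defined, and Proposition~\ref{prop: GP finite Taylor approx} gives
\[
    \E\!\left[\rmx_t^{(0)}\right] \;=\; \sum_{n=0}^{N} A^n x_0 \,\frac{t^n}{n!} \;=\; \left(\sum_{n=0}^{N} \frac{(At)^n}{n!}\right) x_0 .
\]

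Second I would identify the bracketed term as the $N$-th partial sum of the series defining $e^{At}$. Since $\sum_{n \geq 0}\|(At)^n\|/n! \leq \sum_{n\geq 0}(\|A\|\,|t|)^n/n! = e^{\|A\|\,|t|} < \infty$ for every fixed $t \in \T$, this series converges absolutely in operator norm; in particular its partial sums converge to $e^{At}$, and multiplying by the fixed vector $x_0$ gives $\E[\rmx_t^{(0)}] \xrightarrow{N \to \infty} e^{At} x_0$, which is \eqref{eq: limit mean}.

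There is no substantial obstacle here: the proposition really just says that the mean of the generalised-coordinate approximation truncates the matrix exponential exactly, so all that remains is the classical fact that the matrix exponential series converges. The only point worth flagging explicitly is the role of Assumption~\ref{as: smooth sample paths} in ensuring that $\rmx^{(0)}$ exists at arbitrarily high order, which is what licenses taking the limit $N \to \infty$ at all.
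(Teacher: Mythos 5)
Your proposal is correct and follows essentially the same route as the paper: the paper simply declares the result ``obvious from Proposition~\ref{prop: GP finite Taylor approx}'', and what you have done is spell out that observation---the mean is the $N$-th partial sum of the matrix exponential series, which converges to $e^{At}x_0$. Your additional remark on Assumption~\ref{as: smooth sample paths} guaranteeing well-definedness of $\rmx^{(0)}$ at every order is a reasonable (and correct) elaboration of what the paper leaves implicit.
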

Proposition~\ref{lemma: mean convergence} is obvious from Proposition~\ref{prop: GP finite Taylor approx}. Furthermore, this is exactly what we expect as this coincides with the mean of the linear diffusion process \cite[eq. 2.7]{godrecheCharacterisingNonequilibriumStationary2019}.

Now we consider the limiting behaviour of the autocovariance. For the limit to even exist we need a stronger assumption than~\ref{as: smooth sample paths}:

\begin{assumption}
\label{as: analytic autocov}
    The autocovariance $\kappa$ of the noise process $w$ in \eqref{eq: linear SDE} is analytic around $0$ with radius of convergence $2R$.
\end{assumption}
From this assumption, we obtain the limit of the autocovariance of the zero-th order process:

\begin{proposition}\label{lemma: cov convergence}
Under Assumption~\ref{as: analytic autocov},
    \begin{equation}
    \label{eq: limit autocovariance}
    \operatorname{cov}(\rmx_t^{(0)},\rmx_s^{(0)}) \xrightarrow{N\to \infty} \sum_{n,m=1}^{\infty} \frac{t^ns^m}{n!m!}\sum_{k=0}^{n-1}\sum_{l=0}^{m-1}A^{n-1-k}(-1)^k\kappa^{(k+l)}(0)(A^{m-1-l})^\top
    \end{equation}
where the convergence is locally uniform in $t,s\in (-R/\lambda,R/\lambda)$, where $\lambda\triangleq  \max(1,\|A\|_\infty, \|A^\top\|_\infty)$ and where $\|\cdot\|_\infty$ is the matrix operator norm induced by the vector $l^\infty$ norm, which equals the maximum absolute row sum $\|A\|_\infty\triangleq  \max_{1\leq i\leq d} \sum_{j=1}^d |A_{ij}|.$
\end{proposition}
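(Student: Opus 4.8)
The plan is to recognise the right-hand side of \eqref{eq: limit autocovariance} as the pointwise limit of the partial sums in Proposition~\ref{prop: GP finite Taylor approx}, and then to upgrade this to locally uniform convergence by a Weierstrass $M$-test applied to the $\Mat_{d,d}$-valued double series (equipped with the matrix norm $\|\cdot\|_\infty$). Since compact subsets of $(-R/\lambda,R/\lambda)$ are contained in $[-r,r]$ for some $r<R/\lambda$, it suffices to fix $r<R/\lambda$ and produce a summable numerical majorant, uniform over $|t|,|s|\le r$, for the general term
\[
g_{n,m}(t,s)\triangleq \frac{t^ns^m}{n!m!}\sum_{k=0}^{n-1}\sum_{l=0}^{m-1}A^{n-1-k}(-1)^k\kappa^{(k+l)}(0)(A^{m-1-l})^\top .
\]
Then the partial sums over $n,m\le N$ converge uniformly on $[-r,r]^2$ as $N\to\infty$, the series in \eqref{eq: limit autocovariance} is absolutely convergent, and its sum is the asserted limit.

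First I would turn Assumption~\ref{as: analytic autocov} into a Cauchy-type bound on the Taylor coefficients of $\kappa$ at the origin: since the (entrywise) power series $\sum_{j\ge0}\kappa^{(j)}(0)h^j/j!$ converges for $|h|<2R$, its terms at $h=\rho$ are bounded for every $\rho\in(0,2R)$, so there is a constant $C_\rho<\infty$ with $\|\kappa^{(j)}(0)\|_\infty\le C_\rho\, j!/\rho^j$ for all $j\ge0$. Next, using submultiplicativity of $\|\cdot\|_\infty$, the estimates $\|A\|_\infty\le\lambda$, $\|(A^\top)^p\|_\infty\le\lambda^p$, the fact that $\lambda\ge1$, and this bound with $j=k+l\le n+m-2$, I would estimate
\[
\|g_{n,m}(t,s)\|_\infty \le C_\rho\,\frac{|t|^n|s|^m}{n!m!}\sum_{k=0}^{n-1}\sum_{l=0}^{m-1}\lambda^{n+m-2-k-l}\frac{(k+l)!}{\rho^{k+l}}\le C_\rho\, nm\,\lambda^{n+m-2}\,\frac{(n+m-2)!}{\rho^{n+m-2}}\,\frac{|t|^n|s|^m}{n!m!},
\]
where the inner double sum has $nm$ terms and $j\mapsto j!/\rho^j$ is increasing once $j$ exceeds a $\rho$-dependent threshold, so its maximum over $j\le n+m-2$ is controlled by its value at $j=n+m-2$ together with an additive constant whose contribution is summable on its own (being entire in $t,s$). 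The elementary identity
\[
\frac{(n+m-2)!}{n!m!}=\frac{1}{nm}\binom{n+m-2}{n-1}\le\frac{2^{n+m-2}}{nm}
\]
then collapses this to $\|g_{n,m}(t,s)\|_\infty\le C_\rho\,(2\lambda/\rho)^{n+m-2}\,|t|^n|s|^m$, and choosing $\rho\in(2\lambda r,2R)$ (possible precisely because $r<R/\lambda$) makes $\sum_{n,m\ge1}C_\rho(2\lambda/\rho)^{n+m-2}r^{n+m}$ a convergent geometric-type series; this is the desired majorant on $[-r,r]^2$. Letting $r\uparrow R/\lambda$ exhausts the interval and gives locally uniform convergence on $(-R/\lambda,R/\lambda)$.

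The main obstacle is the combinatorial bookkeeping in the middle step: the Cauchy estimate injects a factor $(k+l)!$ into the numerator, and one must confirm that this, together with the $nm$ summands of the nested sum and the powers of $\lambda$, is dominated by the prefactor $1/(n!m!)$. This is exactly what the binomial bound $\binom{n+m-2}{n-1}\le2^{n+m-2}$ delivers, and it also accounts for the constants in the statement: the radius of $\kappa$ is taken to be $2R$ because that factor $2$ is consumed by the binomial bound, and the extra factor $1/\lambda$ in the radius of convergence pays for replacing $\|A^{n-1-k}\|_\infty\,\|(A^\top)^{m-1-l}\|_\infty$ by $\lambda^{n+m-2}$. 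Everything else — exchanging the $N\to\infty$ limit with the (finite, for each $N$) sums, absolute convergence of the limit series, and the matrix-norm manipulations — is routine once the majorant is in hand.
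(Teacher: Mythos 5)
Your proof is correct and reaches exactly the claimed radius $R/\lambda$, but by a genuinely different route from the paper. The paper starts from the same majorising series, bounding the inner sum by $nm\,\lambda^{n+m}\kappa_{n+m}$ with $\kappa_{n+m}\triangleq\max_{0\le i\le n+m}\|\kappa^{(i)}(0)\|_\infty$, and then controls its growth by applying the Cauchy--Hadamard theorem to the bivariate expansion $\E[\rmw_t^{(0)}\rmw_s^{(0)\top}]=\sum_{n,m}t^ns^m(-1)^m\kappa^{(n+m)}(0)/(n!m!)$ (convergent for $|t|,|s|<R$ thanks to the $2R$ radius of $\kappa$), together with the technical Lemma~\ref{lemma: limsup equality} showing that replacing $\|\kappa^{(n+m)}(0)\|_\infty$ by the running maximum $\kappa_{n+m}$ (and inserting the factors $n,m$) does not change the relevant limsup; this gives convergence of the majorant for $|t|,|s|<R/\lambda$ directly. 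You instead convert Assumption~\ref{as: analytic autocov} into the elementary Cauchy coefficient estimate $\|\kappa^{(j)}(0)\|_\infty\le C_\rho\, j!/\rho^j$ for any $\rho<2R$, absorb the resulting $(k+l)!$ against $1/(n!m!)$ via $\binom{n+m-2}{n-1}\le 2^{n+m-2}$ (so the factor $2$ is paid for by the $2R$ radius and the powers of $A$ by $\lambda$), and finish with a geometric majorant and the Weierstrass $M$-test on $[-r,r]^2$, $r<R/\lambda$. What your route buys is self-containedness: no bivariate Cauchy--Hadamard statement and no appendix lemma on limsups of running maxima are needed, and the bookkeeping makes transparent where the $2R$ and the $\lambda$ in the statement are consumed. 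What the paper's route buys is that working with limsups avoids your one slightly informal step: the replacement of $\max_{0\le j\le n+m-2}\lambda^{n+m-2-j}j!/\rho^j$ by its value at $j=n+m-2$ is only valid once $j$ exceeds a $\lambda\rho$-dependent threshold, so you must (as you indicate) split off the low-$j$ regime, e.g.\ bounding the maximum by $C'_\rho\lambda^{n+m-2}+\lambda^{n+m-2}(n+m-2)!/\rho^{n+m-2}$, whose first part yields an entire and hence harmless extra series; writing this out explicitly would complete the argument, but it is a presentational point rather than a gap.
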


\begin{remark}
    We have $R/\lambda \geq R/(\alpha d)$ where $\alpha \triangleq  \max(\{1\}\cup \{|A_{ij}|\}_{1\leq i,j\leq d})$.
\end{remark}

\begin{proof}
$\|\cdot\|_\infty$ is sub-multiplicative, so for $0\leq k\leq n-1$, $0\leq l\leq m-1$,
$$
\begin{aligned}
\left\| A^{n-1-k}(-1)^k\kappa^{(k+l)}(0)(A^{m-1-l})^T\right\|_{\infty}&\leq 
\|A\|_{\infty}^{n-1-k}\|\kappa^{(k+l)}(0)\|_{\infty}\|A^\top\|_\infty^{m-1-l} \\
&\leq \lambda^{n+m}\kappa_{n+m}.
\end{aligned}
$$
where $\kappa_n \triangleq  \max_{0\leq i\leq n} \|\kappa^{(i)}(0)\|_{\infty}$.
From \eqref{eq: mean and cov linear eq}, we have the following upper bound on the autocovariance 
\begin{equation}\label{eq:kN_bound}
|\operatorname{cov}(\rmx_t^{(0)},\rmx_s^{(0)})| \leq
\sum_{n,m=1}^N \frac{|t|^n|s|^m}{n!m!}\left\|\sum_{k=0}^{n-1}\sum_{l=0}^{m-1}A^{n-1-k}(-1)^k\kappa^{(k+l)}(0)(A^{m-1-l})^\top\right\|_\infty \leq \sum_{n,m=1}^N |t|^n|s|^m\lambda^{n+m} \frac{nm\kappa_{n+m}}{n!m!}.
\end{equation}
On the other hand, note that for $-R< t,s< R$, we have by Assumption~\ref{as: analytic autocov}
\begin{equation}\label{eq:k_series}
\E[\rmw^{(0)}_t\rmw^{(0)\top}_s] = \sum_{n,m=0}^\infty t^ns^m\frac{(-1)^m\kappa^{(n+m)}(0)}{n!m!}.
\end{equation}
By the Cauchy-Hadamard theorem applied to \eqref{eq:k_series}, 
\begin{equation}
\label{eq: a lower bound}
\frac{1}{R} = \limsup_{\substack{n+m\to\infty \\ n,m\geq 0}} \left(\frac{\|\kappa^{(n+m)}(0)\|_{\infty}}{n!m!}\right)^{\frac{1}{n+m}}\geq \limsup_{\substack{n+m\to\infty \\ n,m\geq 1}} \left(\frac{\|\kappa^{(n+m)}(0)\|_{\infty}}{n!m!}\right)^{\frac{1}{n+m}}=\limsup_{\substack{n+m\to\infty \\ n,m\geq 1}} \left(\frac{\kappa_{n+m}}{n!m!}\right)^{\frac{1}{n+m}},
\end{equation}
where the last equality holds by a technical Lemma~\ref{lemma: limsup equality} derived in Appendix~\ref{app: technical}. Note that for $n,m\geq 1$ we have $n^{\frac{1}{n+m}}, m^{\frac{1}{n+m}} \to 1$ as $n+m\to\infty$. We apply this to \eqref{eq: a lower bound} to obtain
\begin{equation}
\label{eq: another lower bound}
\limsup_{\substack{n+m\to\infty \\ n,m\geq 1}} \left(\frac{nm\kappa_{n+m}}{n!m!}\right)^{\frac{1}{n+m}} \leq \frac{1}{R}.
\end{equation}
Applying the Cauchy-Hadamard theorem and \eqref{eq: another lower bound}
 to \eqref{eq:kN_bound}, yields that the majorising series for $|\operatorname{cov}(\rmx_t^{(0)},\rmx_s^{(0)})|$ converges absolutely for $-R/\lambda< t,s< R/\lambda$, and hence $\operatorname{cov}(\rmx_t^{(0)},\rmx_s^{(0)})$ converges as $N\to \infty$, locally uniformly within this radius, and the limit is analytic as it can be written as a power series.
\end{proof}

Now that we know the mean and autocovariance of the zero-th coordinate process \eqref{eq: taylor approximation linear eq} as $N\to \infty$, we just need to verify that the limiting process is a Gaussian process. It turns out that pointwise convergence of the autocovariance ensures weak convergence of the finite dimensional distributions of the zero-th coordinate process $\rmx^{(0)}_t$. Local uniform convergence of the autocovariance however, ensures the weak convergence in the sense of Gaussian processes, which is a stronger property.

\begin{lemma}\label{lemma: weak convergence}
Let $(g_N)_{N\geq 1}$ be a sequence of Gaussian processes $g_N:I\times \Omega \to \R^d$, with $I\subset \R$ an interval, with almost surely continuous sample paths, with means $(\mu_N)_{N\geq 1}$ and autocovariances $(k_N)_{N\geq 1}$. If
\begin{enumerate}
    \item $(\mu_N)_{N\geq 1}$ converges pointwise,
    \item $(k_N)_{N\geq 1}$ converges locally uniformly to a continuous limit,
\end{enumerate}
then $(g_N)_{N\geq 1}$ converges weakly in $C(I,\R^d)$ to a Gaussian process with mean $\lim_{N\to\infty} \mu_N$ and kernel $\lim_{N\to\infty} k_N$.
\end{lemma}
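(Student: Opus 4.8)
The plan is to prove weak convergence in $C(I,\R^d)$ by combining convergence of finite-dimensional distributions with tightness, the standard route for weak convergence of stochastic processes on a Polish space of continuous functions. First I would establish convergence of the finite-dimensional distributions. Fix any finite collection of times $t_1,\dots,t_k \in I$. The random vector $(g_N(t_1),\dots,g_N(t_k))$ is Gaussian with mean $(\mu_N(t_1),\dots,\mu_N(t_k))$ and covariance matrix with blocks $k_N(t_i,t_j)$. By hypotheses (1) and (2) these converge (pointwise convergence of $k_N$ being a consequence of local uniform convergence), so the mean vectors and covariance matrices converge entrywise; since a Gaussian law is determined by its first two moments and convergence of mean and covariance of Gaussians implies weak convergence (via characteristic functions, as the characteristic function $\exp(i\langle \theta,\mu_N\rangle - \tfrac12 \theta^\top \Sigma_N \theta)$ converges pointwise), each finite-dimensional distribution of $g_N$ converges weakly to a Gaussian with the limiting mean and covariance. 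This already identifies the limiting finite-dimensional distributions as those of a Gaussian process with mean $\mu \triangleq \lim \mu_N$ and kernel $k \triangleq \lim k_N$; that these are consistent (Kolmogorov) and that such a process with a continuous version exists follows because $k$ is continuous (continuity of the kernel of a Gaussian process on a compact interval, together with a Kolmogorov-continuity-type estimate, gives a continuous modification).

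Next I would prove tightness of $(g_N)$ in $C(I,\R^d)$. It suffices to treat $I$ compact (or exhaust $I$ by compacts and use that tightness is a local-then-global property on $C(I,\R^d)$ with the topology of uniform convergence on compacts). By the Arzelà–Ascoli characterisation of compact sets in $C(I,\R^d)$, tightness reduces to: (i) tightness of $(g_N(t_0))$ for one fixed $t_0$, which is immediate since these are Gaussians with convergent (hence bounded) mean and variance; and (ii) a uniform modulus-of-continuity control, for which I would use a Kolmogorov-type moment bound. Concretely, for a Gaussian process $g_N$,
\[
\E\bigl[\,|g_N(t)-g_N(s)|^2\,\bigr] = |\mu_N(t)-\mu_N(s)|^2 + \operatorname{tr}\bigl(k_N(t,t) - k_N(t,s) - k_N(s,t) + k_N(s,s)\bigr),
\]
and I would bound the right-hand side uniformly in $N$ by a modulus $\rho(|t-s|)$ with $\rho(h)\to 0$ as $h\to 0$, using the local uniform convergence of $k_N$ to the (uniformly) continuous limit $k$ on the compact interval — this makes the family $\{k_N\}$ uniformly equicontinuous — together with local uniform convergence of $\mu_N$. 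Because $g_N(t)-g_N(s)$ is Gaussian, the $2p$-th moments are controlled by the $p$-th power of the second moment, so for $p$ large enough the Kolmogorov–Chentsov tightness criterion (equivalently Billingsley's tightness criterion for $C[0,1]$) applies and yields tightness of $(g_N)$.

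Finally I would combine the two ingredients: tightness plus convergence of all finite-dimensional distributions implies weak convergence in $C(I,\R^d)$ to the unique limit whose finite-dimensional distributions are the identified Gaussian ones, i.e. to the Gaussian process with mean $\lim_N \mu_N$ and kernel $\lim_N k_N$. The main obstacle — and the only place the hypotheses are used in an essential, quantitative way rather than merely for pointwise convergence — is step two, deriving a uniform-in-$N$ modulus of continuity from the local uniform convergence of $k_N$; the point is that pointwise convergence of the kernels would not be enough to guarantee tightness, whereas the assumed local uniform convergence to a continuous limit does furnish the uniform equicontinuity needed. A minor subtlety to handle carefully is the reduction from a general interval $I$ (possibly open or unbounded) to compact subintervals, and checking that the continuous limiting Gaussian process genuinely exists on all of $I$ with almost surely continuous paths; this follows by the same modulus estimate applied to the limit kernel $k$.
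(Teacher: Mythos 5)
Your overall skeleton---convergence of finite-dimensional distributions plus tightness in $C(I,\R^d)$, with the hypotheses entering quantitatively only through the increment control---is the same as the paper's, and your treatment of the finite-dimensional distributions via characteristic functions is fine. The genuine gap is in the tightness step. From local uniform convergence of $k_N$ to a continuous limit you do get a uniform bound $\E\bigl[|g_N(t)-g_N(s)|^2\bigr]\leq\rho(|t-s|)$ on compacts with $\rho(h)\to0$ as $h\to0$, and Gaussianity then gives $\E\bigl[|g_N(t)-g_N(s)|^{2p}\bigr]\leq C_p\,\rho(|t-s|)^p$. But the Kolmogorov--Chentsov/Billingsley criterion requires a bound of the form $C|t-s|^{1+\beta}$ with $\beta>0$, and no fixed power of a general vanishing modulus delivers this: if, say, $\rho(h)\sim 1/\log(1/h)$, then $\rho(h)^p$ is not $O(|t-s|^{1+\beta})$ for any $p$. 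The hypotheses supply no H\"older-type rate on $\rho$, so the step ``for $p$ large enough the Kolmogorov--Chentsov tightness criterion applies'' is a non sequitur, and the tightness argument as written does not go through.

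The paper's proof is arranged precisely so that no rate is needed: it shows only that $g_N(t_N+h_N)-g_N(t_N)\to0$ in $L^2$ (hence in probability) for every bounded sequence of times $(t_N)$ and every sequence $h_N\to0$---which follows directly from uniform convergence of $k_N$ on compacts and uniform continuity of the limit kernel---and then invokes the weak-convergence criteria for continuous processes of \cite[Theorems 14.5, 14.6 and 14.11]{kallenbergFoundationsModernProbability2021}, rather than a moment/modulus criterion. If you want to keep your route, you would need to extract a quantitative (power-law) modulus for $k$ near the diagonal, which is not part of the hypotheses. A secondary point: your modulus bound also invokes \emph{locally uniform} convergence of the means, whereas hypothesis 1 only grants pointwise convergence; the paper's computation works with the centred second moments (and in the application the means are partial sums of a power series, which do converge locally uniformly), so you should either note this as an extra assumption or handle the mean term separately.
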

\begin{proof}
1. and 2. imply that the finite dimensional distributions of the $g_N$ converge. So if $(g_N)_{N\geq 1}$ converges weakly, it necessarily does so to the Gaussian process with mean $\lim_{N\to\infty} \mu_N$ and kernel $\lim_{N\to\infty} k_N$. First suppose $d=1$. Let $(t_N)_{N\geq 1} \subset\T$ be a bounded sequence of times, and $(h_N)_{N\geq 0}$ be a sequence of positive numbers tending to 0. Then
$$
\begin{aligned}
\E[ |g_N(t_N+h_N)-g_N(t_N)|^2] &= \E[g_N(t_N+h_N)^2]+\E[g_N(t_N)^2]-2\E(g_N(t_N+h_N)g_N(t_N)] \\
&=  k_N(t_N+h_N,t_N+h_N) + k_N(t_N,t_N) -2k_N(t_N+h_N,t_N) \\
&= |k_N(t_N+h_N,t_N+h_N)-k_\infty(t_N+h_N,t_N+h_N)| \\
&\;\;+|k_N(t_N,t_N)-k_\infty(t_N,t_N)| \\
&\;\;+|2k_N(t_N+h_N,t_N) -2k_\infty(t_N+h_N,t_N)| \\
&\;\;+|k_\infty(t_N+h_N,t_N+h_N) + k_\infty(t_N,t_N) -2k_\infty(t_N+h_N,t_N)| \\
&\to 0 \text{ as } N\to\infty
\end{aligned}$$
by uniform convergence of $(k_N)_{N\geq 1}$ and uniform continuity of $k_\infty\triangleq  \lim_{N\to\infty}k_N$ on $[\inf_{N\geq 1} (t_N),\sup_{N\geq 1} (t_N+h_N)]\times[\inf_{N\geq 1} (t_N),\sup_{N\geq 1} (t_N+h_N)]$. So $|g_N(t_N+h_N)-g_N(t_N)|$ tends to 0 in $L^2$, and in particular it converges in probability. Then \cite[Theorem 14.5, 14.6 \& 14.11]{kallenbergFoundationsModernProbability2021} implies that $(g_N)_{N\geq 1}$ converges weakly in $C(\T,\R)$, where we also used \cite[Lemma 14.1]{kallenbergFoundationsModernProbability2021} to view $g_N$ as random elements of $C(\mathbb T,\R)$. \par
For arbitrary $d \in \N$, note that $\E[\|g_N(t_N+h_N)-g_N(t_N)\|^2]= \sum_{i=1}^d\E[|g_N(t_N+h_N)_i-g_N(t_N)_i|^2]$, 
so the same proof strategy applies.
\end{proof}

\begin{proposition}
\label{prop: convergence of gen coord linear case}
Under Assumption~\ref{as: analytic autocov},
the generalised coordinate $\rmx^{(0)}$ converges weakly in $C((-R/\lambda,R/\lambda),\R^d)$, where $\lambda\triangleq  \max(1,\|A\|_\infty, \|A^\top\|_\infty)$, to a Gaussian process with mean and autocovariance given by \eqref{eq: limit mean} and \eqref{eq: limit autocovariance}.
\end{proposition}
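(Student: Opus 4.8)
The plan is to obtain the result as a direct application of Lemma~\ref{lemma: weak convergence} to the sequence of zero-th order coordinate processes $(\rmx^{(0)})_{N\geq 1}$, indexed by the order $N$, viewed on the interval $I\triangleq(-R/\lambda,R/\lambda)$ with $\lambda\triangleq\max(1,\|A\|_\infty,\|A^\top\|_\infty)$. Thus the work reduces to verifying the hypotheses of that lemma.

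First I would observe that Assumption~\ref{as: analytic autocov} is stronger than Assumption~\ref{as: smooth sample paths}, which is in turn stronger than Assumption~\ref{as: lipschitz differentiable autocov} at every order (an analytic, hence smooth, autocovariance has all derivatives continuous, hence locally Lipschitz). Therefore, for each finite $N$, Proposition~\ref{prop: GP finite Taylor approx} applies: the process $t\mapsto\rmx^{(0)}_t=\sum_{n=0}^N\rmx^{(n)}_0 t^n/n!$ is a Gaussian process on $\R$, and a fortiori on $I$, whose sample paths are polynomials of degree at most $N$ in $t$, hence smooth and in particular almost surely continuous. This supplies the standing hypothesis of Lemma~\ref{lemma: weak convergence}.

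Second, hypothesis (1) of Lemma~\ref{lemma: weak convergence} --- pointwise convergence of the means --- is exactly Proposition~\ref{lemma: mean convergence}, which gives $\E[\rmx^{(0)}_t]\to e^{At}x_0$ for every $t$. Third, hypothesis (2) --- locally uniform convergence of the autocovariances to a continuous limit on $I$ --- is exactly Proposition~\ref{lemma: cov convergence}, whose limit \eqref{eq: limit autocovariance} is a power series convergent on $I$, hence continuous there. With the standing hypothesis and (1)--(2) in place, Lemma~\ref{lemma: weak convergence} yields that $\rmx^{(0)}$ converges weakly in $C(I,\R^d)$ to a Gaussian process with mean $\lim_{N\to\infty}\E[\rmx^{(0)}_t]=e^{At}x_0$, i.e.\ \eqref{eq: limit mean}, and kernel $\lim_{N\to\infty}\operatorname{cov}(\rmx^{(0)}_t,\rmx^{(0)}_s)$, i.e.\ \eqref{eq: limit autocovariance}.

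There is no genuinely hard step: the statement repackages Propositions~\ref{prop: GP finite Taylor approx}, \ref{lemma: mean convergence} and \ref{lemma: cov convergence} together with Lemma~\ref{lemma: weak convergence}. The only points meriting a moment's care are bookkeeping ones: confirming that the constant $\lambda$ and the radius $R$ are the same as those in Proposition~\ref{lemma: cov convergence}, so that the interval $I=(-R/\lambda,R/\lambda)$ is consistent throughout; and checking that ``weak convergence in $C(I,\R^d)$'' in Lemma~\ref{lemma: weak convergence} is understood with the topology of uniform convergence on compact subsets of the open interval $I$, which is exactly the mode of convergence of the kernels furnished by Proposition~\ref{lemma: cov convergence} (globally uniform convergence on $I$ is neither available nor needed).
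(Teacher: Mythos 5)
Your proposal is correct and follows essentially the same route as the paper: the paper's proof is a one-line application of Lemma~\ref{lemma: weak convergence}, citing Propositions~\ref{lemma: mean convergence} and~\ref{lemma: cov convergence} for its two hypotheses. Your additional bookkeeping (that Assumption~\ref{as: analytic autocov} implies the weaker assumptions, that each finite-$N$ process is Gaussian with continuous polynomial sample paths via Proposition~\ref{prop: GP finite Taylor approx}, and that the limit kernel is continuous) just makes explicit what the paper leaves implicit.
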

\begin{proof}
    Follows from Lemma~\ref{lemma: weak convergence}, noting that conditions 1.~and 2.~have been shown to hold in Proposition~\ref{lemma: mean convergence} and Proposition~\ref{lemma: cov convergence} respectively.
\end{proof}

If, in addition, we assume that:
\begin{assumption}
\label{as: analytic sample paths}
    The sample paths of $w$ are analytic around $0$, almost surely.
\end{assumption}
It is entirely possible this follows from analyticity of the autocovariance, i.e.~Assumption~\ref{as: analytic autocov}; where this implies a.s. analyticity of the noise sample paths on the radius of convergence  $(-2R,2R)$; however, we have not found any results on the matter in the literature.

\begin{theorem}[Solution to the linear SDE]
\label{thm: solution linear SDE}
    Under Assumptions~\ref{as: analytic autocov} and~\ref{as: analytic sample paths}, the unique solution to the linear SDE \eqref{eq: linear SDE} on $\T= (-R/\lambda, R/\lambda)$, where $\lambda\triangleq  \max(1,\|A\|_\infty, \|A^\top\|_\infty)$, is a Gaussian process with mean \eqref{eq: limit mean} and autocovariance \eqref{eq: limit autocovariance}. 
\end{theorem}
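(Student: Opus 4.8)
The plan is to identify the claimed Gaussian law as the $N\to\infty$ limit, in two compatible senses, of the truncated zeroth-order generalised-coordinate processes $\rmx^{(0)}$ of \eqref{eq: taylor approximation linear eq}. On one hand, Proposition~\ref{prop: convergence of gen coord linear case} already gives that $\rmx^{(0)}$ converges \emph{weakly} in $C((-R/\lambda,R/\lambda),\R^d)$ to a Gaussian process $G$ with mean \eqref{eq: limit mean} and covariance \eqref{eq: limit autocovariance}. On the other hand, I would show that $\rmx^{(0)}$ converges \emph{pathwise}, locally uniformly and almost surely, to the genuine solution $x$ of the linear SDE \eqref{eq: linear SDE} on all of $(-R/\lambda,R/\lambda)$. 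Since almost sure convergence implies convergence in distribution, uniqueness of weak limits then forces $x\stackrel{\ell}{=}G$; uniqueness of the solution itself is Picard--Lindelof, exactly as in the proof of the Cauchy--Kovalevskaya theorem.

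The substantive step is to replace the sample-dependent radius coming from the Cauchy--Kovalevskaya theorem by the explicit, sample-independent value $R/\lambda$. For a.e.\ $\omega$, Assumption~\ref{as: analytic sample paths} makes $w_\omega$ analytic near $0$; since $x\mapsto Ax$ is entire, the Cauchy--Kovalevskaya theorem gives a unique analytic solution near $0$, which \eqref{eq: analytic case accuracy} identifies with the power series $P_\omega(t)\triangleq\sum_{n\ge0}\rmx^{(n)}_{\omega,0}t^n/n!$ there, so everything reduces to lower bounding the radius of convergence of $P_\omega$. By Proposition~\ref{prop: autocov generalised GP}, $\rmw^{(k)}_0$ is mean-zero Gaussian with covariance $(-1)^k\kappa^{(2k)}(0)$; applying Cauchy's estimates to the holomorphic extension of $\kappa$ on $\{|z|<2R\}$ (this is where Assumption~\ref{as: analytic autocov} enters) gives $\|\kappa^{(2k)}(0)\|_\infty\le C_r\,(2k)!\,r^{-2k}$ for every $r<2R$, hence $\E\|\rmw^{(k)}_0\|_\infty^2\le C_r'\,(2k)!\,r^{-2k}$. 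Since $\sqrt{(2k)!}/k!$ grows like $2^k$ up to a polynomial factor in $k$ (Stirling), Markov's inequality applied to the events $\{\|\rmw^{(k)}_0\|_\infty/k!\ge (2/r)^k k\}$ yields summable probabilities, so the first Borel--Cantelli lemma---the device already used for convolved white noise---gives $\limsup_k(\|\rmw^{(k)}_0\|_\infty/k!)^{1/k}\le 2/r$ almost surely; letting $r\nearrow 2R$ shows the Taylor series of $w_\omega$ at $0$ has radius of convergence $\ge R$ almost surely. Feeding this into the recursion \eqref{eq: recursion linear eq} through $\|\rmx^{(n)}_0\|_\infty\le\lambda^n\|z\|_\infty+\lambda^{n-1}\sum_{k<n}\|\rmw^{(k)}_0\|_\infty$ gives $\limsup_n(\|\rmx^{(n)}_0\|_\infty/n!)^{1/n}\le\lambda/R$ almost surely, i.e.\ $P_\omega$ has radius $\ge R/\lambda$ almost surely.

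Given the radius bound, on $(-R/\lambda,R/\lambda)$ the partial sums of $P_\omega$ are precisely $\rmx^{(0),N}_{\omega,\cdot}$, so $\rmx^{(0),N}_{\omega,\cdot}\to P_\omega$ locally uniformly; differentiating $P_\omega$ termwise and using \eqref{eq: recursion linear eq} shows $P_\omega'=AP_\omega+w_\omega$ with $P_\omega(0)=z$ (with $w_\omega$ equal to its own Taylor series on the interval, consistent with the radius bound), whence $P_\omega=x_{\omega,\cdot}$ by uniqueness. Therefore $\rmx^{(0),N}\to x$ almost surely in $C((-R/\lambda,R/\lambda),\R^d)$, hence in distribution; matching this with the weak limit $G$ of Proposition~\ref{prop: convergence of gen coord linear case}, and using uniqueness of the solution of \eqref{eq: linear SDE}, completes the argument.

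I expect the uniform radius estimate to be the main obstacle: it is the one step that genuinely requires analyticity of $\kappa$ (Assumption~\ref{as: analytic autocov}) rather than merely of the sample paths, and it is what turns the purely local Cauchy--Kovalevskaya conclusion into a statement on a fixed interval of radius $R/\lambda$. An alternative that sidesteps the pathwise analysis: Proposition~\ref{lemma: cov convergence} already makes $(\rmx^{(0),N}_t)_N$ Cauchy in $L^2$ for each $t\in(-R/\lambda,R/\lambda)$, hence $L^2$-convergent to a Gaussian process $Y$ on the original probability space; one then verifies that $Y$ solves \eqref{eq: linear SDE} by passing to the $L^2$-limit in $\rmx^{(0),N}_t=z+\int_0^t\big(A\rmx^{(0),N}_s+\sum_{m<N}\rmw^{(m)}_0 s^m/m!-A\rmx^{(N)}_0 s^N/N!\big)\d s$, the polynomial remainder vanishing and $\sum_{m<N}\rmw^{(m)}_0 s^m/m!\to w_s$ in $L^2$ by the second-order computations of Section~\ref{sec: second order statistics}.
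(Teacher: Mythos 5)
Your proposal is correct in outline and rests on the same two ingredients as the paper's own proof---the pathwise identification of the $N=\infty$ generalised-coordinate process with the SDE solution via analyticity, as in \eqref{eq: analytic case accuracy}, combined with the weak-convergence statement of \cref{prop: convergence of gen coord linear case}---but you add a genuinely new quantitative step that the paper omits. The paper's proof is two lines: it cites \eqref{eq: analytic case accuracy}, which holds only on a sample-dependent interval $(-R_\omega,R_\omega)$, and \cref{prop: convergence of gen coord linear case}, which lives on the deterministic interval $(-R/\lambda,R/\lambda)$, and it never reconciles the two radii. Your Cauchy-estimate-plus-Borel--Cantelli argument (bounding $\|\kappa^{(2k)}(0)\|_\infty\le C_r(2k)!\,r^{-2k}$ for $r<2R$, deducing $\limsup_k(\|\rmw^{(k)}_0\|_\infty/k!)^{1/k}\le 1/R$ a.s., and propagating this through \eqref{eq: recursion linear eq} to get radius at least $R/\lambda$ for the solution series) supplies exactly that reconciliation---indeed it delivers the uniform-in-$\omega$ radius that Section~\ref{sec: Analytic solutions} promises for the linear case but the paper never actually establishes---and the identification of the law via almost sure convergence, hence convergence in distribution, and uniqueness of weak limits is sound. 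What the paper's version buys is brevity; what yours buys is rigour on the one point where the paper is loose.

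One caveat: your pathwise step still leans on the parenthetical ``$w_\omega$ equal to its own Taylor series on the interval''. The radius bound controls the \emph{series}, not the function: a path that is analytic only near $0$ (which is all Assumption~\ref{as: analytic sample paths} literally gives) can differ from its Taylor series outside a small sample-dependent neighbourhood even when that series has radius $\ge R$, so $P_\omega$ would then solve an ODE forced by the analytic continuation of the germ of $w_\omega$ rather than by $w_\omega$ itself. Closing this requires a.s.\ analyticity of the sample paths on the whole interval (so the real-analytic identity theorem applies)---a strengthening the paper itself flags as open just after Assumption~\ref{as: analytic sample paths}, so this is an imprecision you inherit from the theorem as stated rather than introduce. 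Notably, your alternative $L^2$ route in the last paragraph avoids the issue entirely: by \cref{lemma: cov convergence} and the second-order statistics of Section~\ref{sec: second order statistics}, $\sum_{m<N}\rmw^{(m)}_0 s^m/m!\to w_s$ in $L^2$ for $|s|<R$ using essentially only Assumption~\ref{as: analytic autocov}, and passing to the $L^2$ limit in the integrated equation identifies the limit with the unique solution of \eqref{eq: linear SDE} on the whole interval; of the two arguments you sketch, that one is the cleanest complete proof.
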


\begin{proof}
We showed in \eqref{eq: analytic case accuracy} that 
        under the conditions of Theorem~\ref{thm: Cauchy–Kovalevskaya}, i.e.~Assumption \eqref{as: analytic sample paths}, $N=\infty$ implies that $\rmx^{(0)}$ coincides with the unique SDE solution, and we showed in \cref{prop: convergence of gen coord linear case} that when $N=\infty$, $\rmx^{(0)}$ is a Gaussian process with said statistics.
\end{proof}

As a corollary of Theorem~\ref{thm: solution linear SDE}, the linear SDE driven by Gaussian convolved white noise \eqref{eq: Gaussian convolved white noise} has a Gaussian process solution defined for all time, which has a computable mean and autocovariance, where we insert the derivatives of the Gaussian kernel computed in \cref{eg: Gaussian autocovariance} into the expressions \eqref{eq: limit mean} and \eqref{eq: limit autocovariance}.

\section{Density dynamics via the Fokker-Planck equation}


How does the probability density of the generalised coordinate process $\mathbf x_t \sim \rho_t$ evolve over time? Dynamics in generalised coordinates are deterministic $ \dot{\mathbf{x}} = \mathbf{D}\mathbf{x}$ given a random initial condition, which determines the initial probability density $\rho_0$, i.e. \eqref{eq: gen Cauchy problem}. The time evolution of the density is given by the Fokker-Planck equation \cite{riskenFokkerPlanckEquationMethods1996,pavliotisStochasticProcessesApplications2014, Soize1994}, which in the absence of diffusion coefficient reduces to a Liouville equation. The following computations are formal when $N=\infty$. The Fokker-Planck equation reads:
\begin{equation}
\label{eq: FP equation}
    \begin{split}
        \partial_t \rho_t(\mathbf z) 
& =-\nabla \cdot(\mathbf D \mathbf z \rho_t(\mathbf z))\\
&= -\nabla \rho_t(\mathbf z) \cdot  \mathbf D \mathbf z +\rho_t(\mathbf z)\underbrace{\nabla \cdot( \mathbf D\mathbf z)}_{=0}\\
&= -\nabla \rho_t(\mathbf z) \cdot  \mathbf D \mathbf z 
    \end{split}
\end{equation}
where $\mathbf z\triangleq\rmz^{(:N)} \in \G^{d,N}$ is a generalised state. To show the equality in underbrace in \eqref{eq: FP equation} we observed that:
\begin{equation}
    \nabla \cdot( \mathbf D\mathbf z)=\sum_{n=0}^{N} \nabla_{\rmz^{(n)}}\cdot (\mathbf D \mathbf z)^{(n)} =\sum_{n=0}^{N-1} \nabla_{\rmz^{(n)}}\cdot \rmz^{(n+1)} + \nabla_{\rmz^{(N)}}\cdot 0=0.
\end{equation}

The first two moments of the initial density $\rho_0$ can be obtained analytically when the underlying SDE is linear by following Propositions~\ref{prop: GP finite Taylor approx} (finite $N$) and~\ref{lemma: cov convergence} (infinite $N$). When the underlying noise process is a Gaussian process, the initial density is a Gaussian determined by these two moments.
The Fokker-Planck equation \eqref{eq: FP equation} preserves Gaussianity of densities: the generalised process solves \eqref{eq: equation generalised process} which yields
\begin{equation}
   \rho_0=\mathcal N(\bmu ,\mathbf \Xi) \Rightarrow \rho_t=\mathcal N\left(e^{\mathbf D t} \bmu, e^{\mathbf D t}  \mathbf \Xi e^{\mathbf D^\top t} \right).
\end{equation}
When the underlying SDE is non-linear, we can still efficiently sample from the initial density---we will show how in \cref{sec: num int}.

The stationary Fokker-Planck equation reads
\begin{align}
 \nabla \rho(\mathbf z) \cdot \mathbf D \mathbf z=0.
\end{align}

To understand the (stationary) Fokker-Planck equation, it might help to know the spectrum of $ \mathbf D$. By inspection, its unique eigenvalue is $0$ with eigenvectors $\vec e_1,\ldots,\vec e_d$ of multiplicity $N+1$.

\section{Path integral formulation}

In statistical, quantum and classical mechanics, the path integral formulation of dynamics expresses the negative log probability of a path, called the \textit{action}, in terms of  the so-called \textit{Lagrangian}, which is a function of states \cite{ seifertStochasticThermodynamicsFluctuation2012, chaichianPathIntegralsPhysics2001, chaichianPathIntegralsPhysics2001a}. In generalised coordinates, the path integral formulation takes a very simple form, where the action 
and the Lagrangian 
coincide up to a simple transformation. 

\subsection{Points and paths}

Indeed, in generalised coordinates, states embed injectively and canonically into paths 

\begin{equation}
\label{eq: embedding states paths}
\begin{split}
    \eta: \G^{d,N} &\to C(\T, \G^{d,N})\\
    \mathbf z&\mapsto\left(  \dot{\mathbf x}_t= \mathbf D \mathbf x_t, \: \mathbf x_0=\mathbf z\right)\\&=\left( t \mapsto  \exp( t\mathbf D)\mathbf z\right).
\end{split}
\end{equation}

\subsection{Lagrangian and action}

Recall the generalised state solution of our Cauchy problem \eqref{eq: gen Cauchy problem} at the initial time point solves the algebraic equation:
\begin{equation}
\label{eq: alg eq}
\begin{split}
\mathbf D' \mathbf x_0 =\mathbf f(\mathbf x_0)+ \mathbf w_0.
\end{split}
\end{equation}


We know that for a stationary Gaussian process $w$, we have $\mathbf w_0 \sim \mathcal N(0, \mathbf \Sigma)$. That is, up to a time-independent additive constant:

\begin{equation}
-\log p(\mathbf w_0)\doteq\mathbf w_0^{\top} (2\mathbf \Sigma)^{-1}\mathbf w_0.
\end{equation}

We thus define the Lagrangian, which expresses the negative log probability of a generalised state (up to an additive constant) as follows:

\begin{equation}
\mathcal L: \G^{d,N} \to \R, \quad    \mathcal L( \mathbf x_0)\triangleq\left(\mathbf D'  \mathbf x_0-\mathbf f(\mathbf x_0)\right)^{\top}(2\mathbf \Sigma)^{-1}\left(\mathbf D'  \mathbf x_0-\mathbf f(\mathbf x_0)\right)\doteq -\log p(\mathbf x_0).
\end{equation}
The Lagrangian satisfies (assuming $\mathbf \Sigma$ is non-singular\footnote{This is the case in the usual examples, see~\ref{eg: Gaussian autocovariance} and~\ref{eg: Square rational autocovariance}.} for `$\Leftarrow$')
\begin{equation}
        \mathcal L(\mathbf x_0)\geq 0,\quad 
   \mathbf D'  \mathbf x_0=\mathbf f(\mathbf x_0)  \iff 
   \mathcal L(\mathbf x_0)=0\Rightarrow \nabla\mathcal L(\mathbf x_0)=0.
\end{equation}

It follows that we can define the action (the negative log probability of a path in generalised coordinates up to an additive constant) as
\begin{equation}
    \mathcal A
    \triangleq  \L \circ \eta^{-1}
\end{equation}
where $\eta$ is the canonical embedding of states into paths \eqref{eq: embedding states paths}.

\subsection{Path of least action}


A \textit{path of least action} is a most likely generalised path
\begin{equation}
\label{eq: path of least action}
    \bvx\in\arg\min \mathcal A
\end{equation}
which, via the embedding of states into paths \eqref{eq: embedding states paths}, corresponds to a most likely generalised state (uniquely defined in terms of $\bvx$):
\begin{equation}
\label{eq: state of least action}
\begin{split}
    &\bvx\triangleq  \eta(\bvx_0)\\
    \bvx_0\in \arg\min \mathcal L
   \iff &\mathcal L(\bvx_0)=0 \Rightarrow \nabla\mathcal L(\bvx_0)=0.
\end{split}
\end{equation}
we will denote paths of least action and their corresponding generalised state with a bar $\bar \cdot\:$.

As usual, the path of least action is uniquely determined given a boundary condition (e.g. start and end points in classical mechanics); here this is the initial state in the zero-th coordinate $\bx_0^{(0)}=z$. Indeed, given the initial state $\bx_0^{(0)}$ and assuming $\mathbf \Sigma$ is non-singular, the path of least action is the unique minimizer of the Lagrangian which is the unique solution to 
\begin{equation}
\label{eq: minimiser of Lagrangian}
    \mathbf D' \bvx_0=\mathbf f(\bvx_0), \quad \bvx_0^{(0)}=z.
\end{equation} 
Unsurprisingly, the most likely generalised state $\bvx_0$ given boundary conditions is determined by substituting the most likely generalised noise state in the initial condition of the generalised Cauchy problem \eqref{eq: alg eq}. This is because the mapping $\mathbf w_0\mapsto \bvx_0$ given a boundary condition implicit in \eqref{eq: alg eq} is injective. In the non-Gaussian noise case we would simply substitute the most likely value(s) of $\mathbf w_0$ in the initial condition \eqref{eq: alg eq} to obtain the most likely generalised state(s).

We will see how to solve equations of the form \eqref{eq: alg eq}, \eqref{eq: minimiser of Lagrangian} explicitly in Section~\ref{sec: zigzag}.


\subsubsection{Relationship to initial SDE}

Of course, the path of least action in generalised coordinates for a finite order $N$ will accurately approximate the most likely path of the initial SDE \eqref{eq: SDE}, solution to
\begin{equation}
\label{eq: least action original}
    \dot{\lx}_t = f(\lx_t), \quad \lx_0=z\in \R^d 
\end{equation}
on a short time interval where the Taylor polynomial is accurate. The solution to \eqref{eq: least action original} satisfies $\forall t\in \T, N \in \N$
\begin{equation}
\label{eq: L minimiser true most likely path}
    \L\begin{pmatrix}
\lx_t\\
    \frac d {dt}\lx_t\\
                        \vdots \\
        \frac{d^N}{dt^N} \lx_t
\end{pmatrix}=0 \quad \text{and} \quad \mathbf D'  \begin{pmatrix}
\lx_t\\
    \frac d {dt}\lx_t\\
                        \vdots \\
        \frac{d^N}{dt^N} \lx_t
\end{pmatrix}=\mathbf f\begin{pmatrix}
\lx_t\\
    \frac d {dt}\lx_t\\
                        \vdots \\
        \frac{d^N}{dt^N} \lx_t
\end{pmatrix} \quad \text{and} \quad \nabla \L\begin{pmatrix}
\lx_t\\
    \frac d {dt}\lx_t\\
                        \vdots \\
        \frac{d^N}{dt^N} \lx_t
\end{pmatrix}=0.
\end{equation}
We only have the equality between $\lx_t=\bvx^{(0)}_t$ on some time interval around $0$ when the flow in \eqref{eq: least action original} is analytic, and taking $N\to \infty$ in \eqref{eq: state of least action}.

Interestingly enough, the most likely path of the initial SDE \eqref{eq: least action original} can be approximately recovered through a generalised gradient descent on the finite order Lagrangian, as we will see in \cref{sec: most likely path via Lagrangian}.


\chapter{Numerical methods in generalised coordinates}
\label{chap: 3}

\section{Numerical integration via generalised coordinates}
\label{sec: num int}

We can use generalised coordinates to numerically solve stochastic differential equations on short time intervals. This simply involves computing the Taylor expansions of the solution corresponding to any given noise sample path.

\subsection{Zigzag methods}
\label{sec: zigzag}

Here we demonstrate two numerical integration algorithms for SDEs of the form \eqref{eq: SDE} driven by a stationary Gaussian process $w$ with sample paths (at least) $(N-1)$-times differentiable, almost surely,\footnote{Sufficient conditions for many times differentiability are given in Proposition~\ref{prop: Sample differentiability of GPs}.} and with a deterministic initial condition $x_0=z\in \R^d$.

\begin{enumerate}
    \item Sample an instance of $\vrw_0\sim \mathcal N(0,\mathbf \Sigma)$. These are the serial derivatives of a given sample path of $w$ at $t=0$. Here, $\mathbf \Sigma$ is given as a function of the autocovariance of $w$, see \eqref{eq: covariance gen fluctuations}.
    \item Solve for the generalised state $\vrx_0$, corresponding to the serial derivatives of the solution $x$ associated with the given sample path of $w$. The system of equations to solve is, cf. \eqref{eq: algebraic equation derivatives}, \eqref{eq: gen Cauchy problem}: 
    \begin{equation}
    \label{eq: zigzag}
      \begin{tikzcd}
          \rmx^{(1)}_0 \arrow[dr,  dotted] &  \arrow[l,  dotted, "="']  \mathbf f^{(0)}( \rmx_0^{(0)})+\rmw_0^{(0)}  \\
          \rmx^{(2)}_0 \arrow[dr,  dotted] & \arrow[l, dotted, "="']\mathbf f^{(1)}(\rmx_0^{(:1)})+\rmw_0^{(1)}\\
         \vdots\arrow[dr,  dotted] & \vdots\arrow[l,  dotted, "="']\\
         \rmx_0^{(N)} &\arrow[l,  dotted, "="']  \mathbf f^{(N-1)} (\rmx_0^{(:N-1)})+ \rmw_0^{(N-1)}
        \end{tikzcd}
    \end{equation}   
    The solving procedure proceeds in a zigzag pattern, where we compute the right-hand side of the first line in \eqref{eq: zigzag}, which equals $ \rmx^{(1)}_0$, which yields the RHS of the second line, which equals $ \rmx^{(2)}_0$, etc.
    We call this the \textit{zigzag method}. The zigzag method requires knowledge of the generalised flow $\mathbf f$ (\cref{def: gen flow}), which one can compute analytically, or through symbolic or automatic differentiation. Alternatively, one can use the simpler, albeit approximate, local linear approximation~\ref{ap: local lin approx} of the flow \eqref{eq: bold f local linear}; in which case we say that we are using the \textit{(local) linearised zigzag method}.
    \item Compute and evaluate the Taylor polynomial to the solution:
    \begin{equation}    \rmx_t^{(0)}=\sum_{n=0}^{N}\rmx_0^{(n)}\frac{t^n}{n!}
    \end{equation}
\end{enumerate}
These zigzag methods allow us to compute exact or approximate Taylor expansions of solution sample paths, respectively.

\subsection{Simulations}

\begin{figure}[h]
\centering\includegraphics[width=0.7\textwidth]{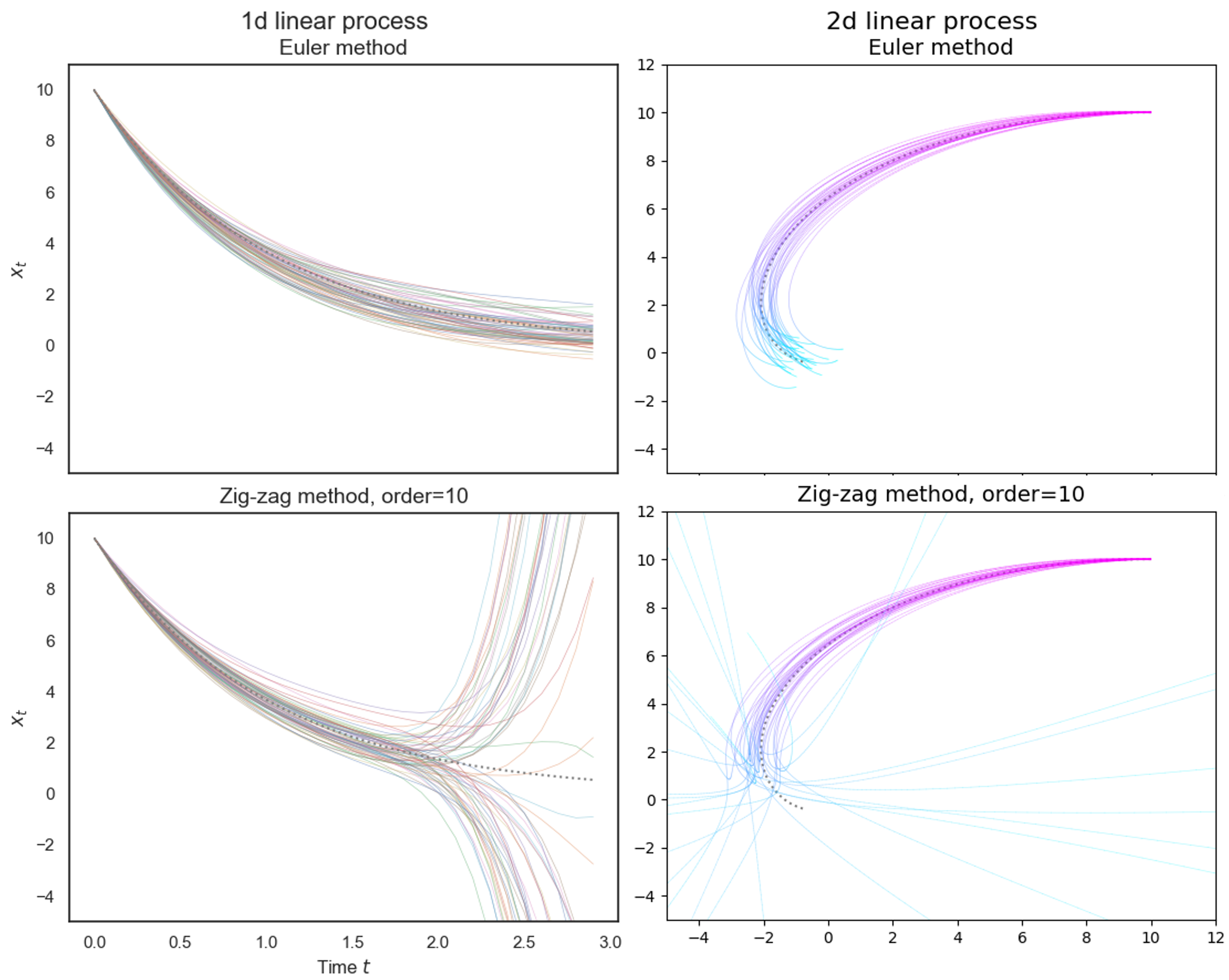}
\caption{\textbf{Numerical integration of 1 and 2 dimensional linear SDEs.} On the left (resp. right) panels, we simulate a one (resp. two) dimensional linear SDE driven by a stationary Gaussian process with Gaussian autocovariance (white noise convolved with a Gaussian). For the top panels, we sampled white noise paths, convolved them with a Gaussian, and numerically integrated the resulting SDE pathwise using Euler's method, providing a 'ground truth' comparative baseline. For the bottom panels, we used the zigzag numerical integration method (both zigzag methods coincide for this system), plotting an exact Taylor expansion of solution sample paths that uses derivatives up to order $10$. On the left, each (1d) sample path is represented as a function of time in a different colour, while on the right each (2d) sample path is plotted on a plane with a start in magenta and an end in cyan. The path of least action (i.e.~in the absence of noise) is plotted in each panel in a dotted grey line.
Note that the trajectories in the top and bottom panels cannot be compared individually---only statistically---since they correspond to distinct noise realisations (since the noise samples are generated differently in each method).
}
\label{fig: numint lin}
\end{figure}

\begin{figure}[h]
\centering\includegraphics[width=\textwidth]{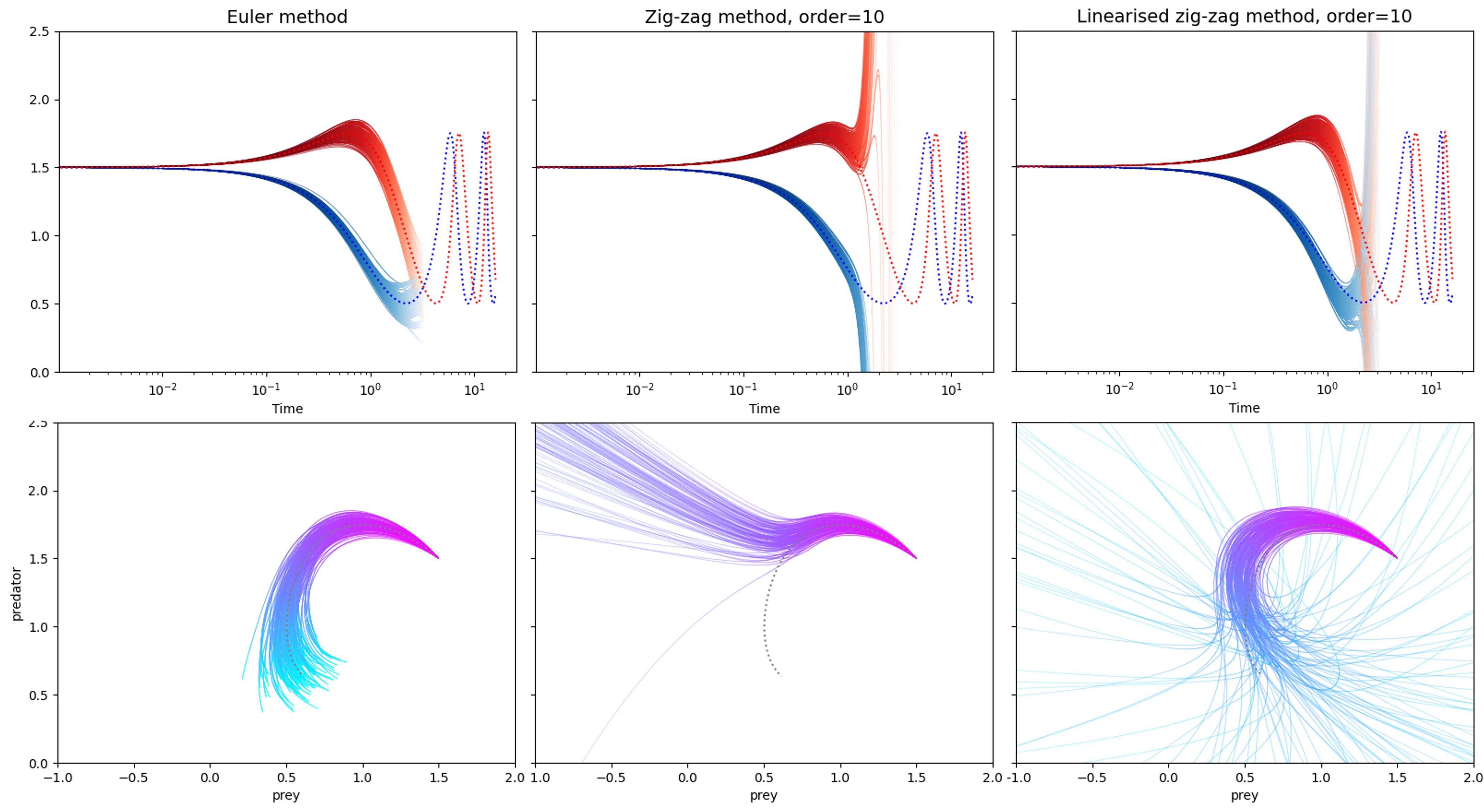}
\caption{\textbf{Numerical integration of stochastic Lotka-Volterra system.} We simulate a stochastic Lotka-Volterra system driven by a stationary Gaussian process with Gaussian autocovariance (white noise convolved with a Gaussian). The Lotka-Volterra system is a toy model for the relative proportions of predator and prey in an ecosystem~\cite{Mao_2002}. In the top panels we plot sample trajectories of the system over time, with the relative proportions of predators in red and prey in blue; in the bottom panels we plot these trajectories on a plane with a start in magenta and an end in cyan. The path of least action (i.e.~in the absence of noise) is plotted in each panel as a dotted line. In the first column, we plot trajectories generated with Euler's method, providing a 'ground truth' comparative baseline. In the second and third columns, we plot trajectories generated with the zigzag and linearised zigzag methods. The zigzag and linearised zigzag methods solved the system for the same set of noise sample paths, while Euler's method used a different set of noise samples (since the noise is realised differently in this method).}
\label{fig: numint LV}
\end{figure}

\begin{figure}[h]
\centering\includegraphics[width=\textwidth]{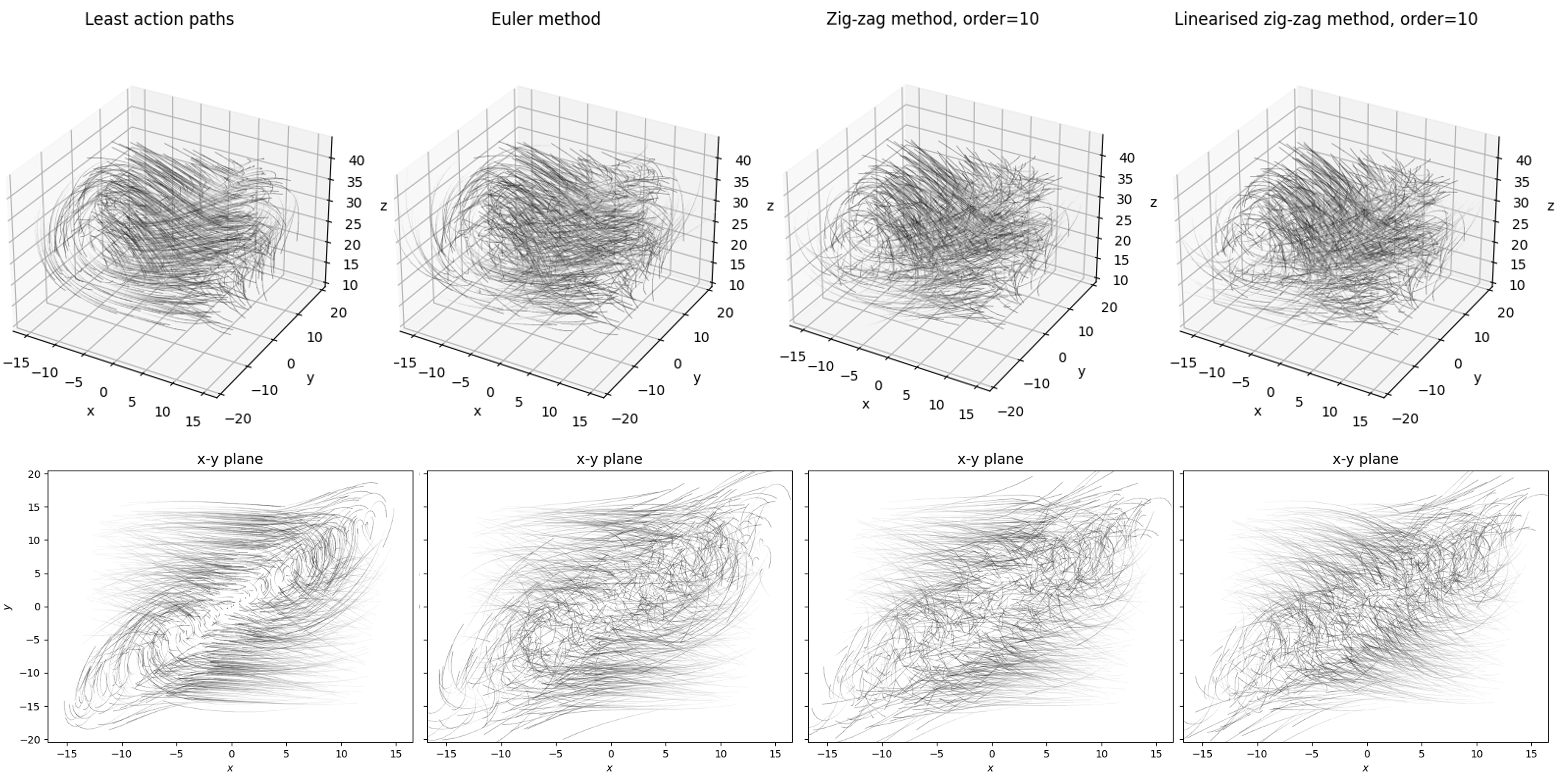}
\caption{\textbf{Numerical integration of stochastic Lorenz system.} We simulate a stochastic Lorenz system driven by a stationary Gaussian process with Gaussian autocovariance (white noise convolved with a Gaussian). The plots are sample trajectories at various initialisations (shared across panels), in 3d in the top panels, with their 2d projections on the $x$-$y$ plane in the bottom panels. The first column shows paths of least action (i.e.~in the absence of noise). The subsequent columns shows paths integrated with Euler, zigzag and linearised zigzag methods, respectively, where Euler serves as a comparative baseline. Importantly, the noise realisations 
were shared only by the zigzag and linearised zigzag methods (since the noise is realised differently in Euler's method).}
\label{fig: numint Lorenz}
\end{figure}

We compare integration methods on four simple SDEs driven by a stationary Gaussian process with Gaussian autocovariance. This choice of noise has two benefits: 1) it is the canonical example of smooth noise; 2) it is perhaps the only example of coloured noise for which SDEs can be straightforwardly integrated using standard methods, providing a 'ground truth' comparative baseline. Indeed, this is because this noise can also be simulated by convolving white noise with a Gaussian, following which one can integrate the resulting SDE pathwise using Euler's method (or any other established ODE solver). 

The systems we simulate are: one dimensional and two dimensional linear SDEs, 
see Figure~\ref{fig: numint lin}; a stochastic Lotka-Volterra system, see Figure~\ref{fig: numint LV}; and a stochastic Lorenz system, see Figure~\ref{fig: numint Lorenz}. For each system, we compare the zigzag methods with the baseline provided by Euler's method.

\subsection{Results}

As expected, the (non-linearised) zigzag method provides extremely accurate trajectories on short time-spans---in virtue of Taylor's theorem---until Taylor expansions blow up. The length of this timespan is somewhat inversely related with the amount of non-linearity of the underlying system. The surprising finding is that the linearised zigzag method yields trajectories that are almost as accurate as the exact method 
and which remain accurate on a longer timespan; in addition to being computationally cheaper, the local linearised method appears to provide reconstructions that are more stable in time.

By producing Taylor expansions to SDE solutions, these methods are only accurate on relatively short time-intervals. We will discuss how these methods could potentially be extended to accurately simulate smooth noise SDEs globally in time in \cref{sec: future directions}.

\section{Most likely path via Lagrangian}
\label{sec: most likely path via Lagrangian}

Now we show a method that 
approximately recovers the most likely path of an SDE \eqref{eq: SDE} \textit{globally} in time from the finite order Lagrangian in generalised coordinates. The ideas herein 
underlie the basis of (generalised) Bayesian filtering as presented in the next section, and 
have been used to relate the dynamics of multipartite processes to Bayesian filters in derivations of the free-energy principle \cite{fristonFreeEnergyPrinciple2023a,fristonPathIntegralsParticular2023}.

\subsection{The method}
\label{sec: path integral via Lagrangian method}

The most likely trajectory of our initial SDE \eqref{eq: SDE} is the unique solution to the SDE  without the noise \eqref{eq: least action original}. This trajectory minimises the Lagrangian $\L$ at all times and for all orders of motion $N$ \eqref{eq: L minimiser true most likely path}.
\textit{Suppose that we would like to recover this trajectory \emph{globally} in time from a generalised coordinates formulation with a \emph{finite} order of motion $N$}.


The path of least action in generalised coordinates \eqref{eq: state of least action} with a finite order of motion is the Taylor approximation to the target trajectory, which quickly diverges. 
To obtain a reconstruction globally in time, one can force the Taylor expansion \eqref{eq: state of least action} back onto the constraint set where the Lagrangian is minimised, which characterises our target trajectory. One simple
way to do this is to subtract the gradient of the Lagrangian to the flow of the equation which characterises the path of least action in action generalised coordinates \eqref{eq: embedding states paths}, so that this motion is always pulled toward the constraint set. This results in the following dynamical equation
\begin{equation}
\label{eq: regularised equation for least action}
 \dot{\bvx}_t= \mathbf D \bvx_t
    -\lambda\nabla\mathcal L(\bvx_t).
\end{equation}
with the usual the initial condition \eqref{eq: minimiser of Lagrangian}.
Here $\lambda >0$ is a weight, which interpolates between the unconstrained Taylor expansion ($\lambda=0$) and the Taylor expansion projected onto the constraint set ($\lambda\to +\infty$). We chose this notation to be reminiscent of Lagrange multipliers. The intuition is that the solution to \eqref{eq: regularised equation for least action} should tend to the most likely path of \eqref{eq: SDE} as $\lambda$ grows large. 


\subsection{Simulations and results}

\begin{figure}[h]
\centering\includegraphics[width=\textwidth]{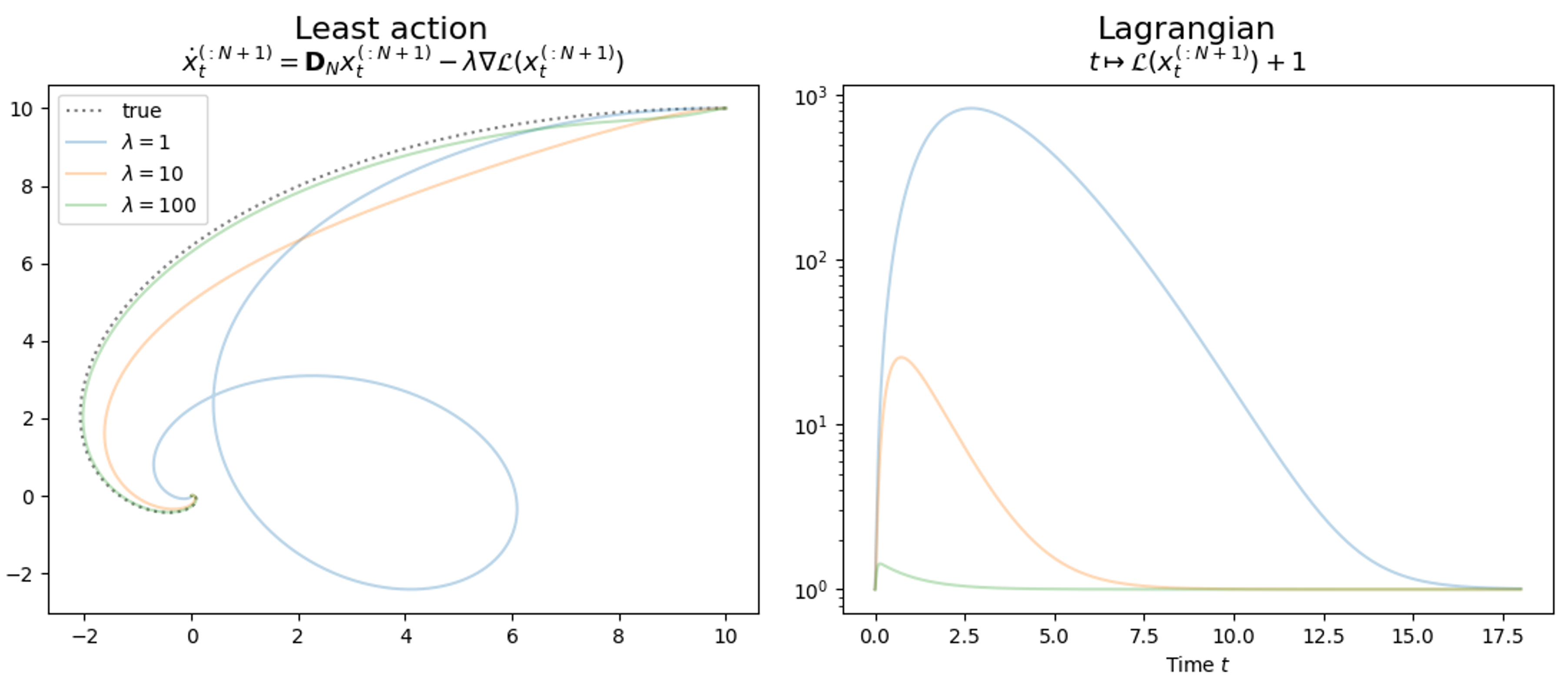}
\caption{\textbf{Recovering path of least action for linear SDE.} \textit{Left}: We show the true path of least action of a given 2 dimensional linear SDE with initial condition $(10,10)$ in a dashed grey line. We then show the solution to \eqref{eq: regularised equation for least action} for $N+1=4$ orders of motion and different weightings $\lambda=1,10,100$, confirming the claim that it tends to the path of least action as $\lambda$ grows large. \textit{Right}: We plot the Lagrangian over time ($+1$ to plot in log-scale) for solutions to \eqref{eq: regularised equation for least action} corresponding to respective weightings of $\lambda$. The closer the solution is to the path of least action the more the Lagrangian is minimised at all times, and vice-versa.
}
\label{fig: OU least}
\end{figure}

We verify this intuition on a simple simulation of a two dimensional linear SDE, where we numerically integrate \eqref{eq: regularised equation for least action} (with the initial condition \eqref{eq: minimiser of Lagrangian}) using Euler's method, for $N=4$ orders of motion, with several weightings of $\lambda >0$. The upshot is that indeed the solution to \eqref{eq: regularised equation for least action} tends to the most likely path of the SDE \eqref{eq: SDE} as $\lambda$ grows large; please see Figure~\ref{fig: OU least}. 

Note that for non-linear SDEs, higher order numerical integrators are usually required to successfully implement this method (e.g. the Runge–Kutta–Fehlberg \cite{hairerSolvingOrdinaryDifferential2008} or Ozaki solvers \cite{ozakiBridgeNonlinearTime1992,fristonVariationalTreatmentDynamic2008}) as the integration problem tends to be a stiff one.

\section{Generalised Bayesian filtering}
\label{sec: GF}

The goal of Bayesian filtering---like any filtering algorithm---is to infer latent variables from observations. What follows are methods that leverage generalised coordinates to infer stochastic signals driven by coloured noise, with potentially long-range dependencies. As we argue in the introduction, this is a more realistic noise assumption than white noise in many scenarios, which include radio signaling, neuroimaging, radar tracking and control systems. To our knowledge, generalised filtering was originally developed in the late 2000s to filter smooth medical time-series obtained through functional magnetic resonance imaging  \cite{fristonVariationalTreatmentDynamic2008,fristonGeneralisedFiltering2010}. The approach has since percolated into other fields, showing state-of-the-art performance in several experiments
\cite{balajiBayesianStateEstimation2011,bos2022free,meera2023adaptive,anil2021dynamic}. Before delving into the methods, note that generalised filtering can be thought of as a higher order version of an extended Kalman filter developed specifically for coloured noise. For interested readers, experiments comparing generalised filtering, extended Kalman filtering, particle filtering and other established schemes were reported in \cite{fristonVariationalTreatmentDynamic2008,fristonGeneralisedFiltering2010}.

\subsection{The simplest generative model}
\label{sec: gen mod}

As a Bayesian method for filtering, the first thing to specify is a generative model for our empirical observations. For didactic purposes, we consider a simple generative model and mention extensions in~\cref{sec: extensions GF}. The question of choosing a suitable generative model in any given situation is an important one, which we develop in~\cref{sec: extensions GF}.

We assume that latent variables follow a stochastic differential equation, which constitutes the prior:
\begin{equation}
\label{eq: prior model}
\frac{d x_t}{d t}=f\left(x_t\right)+w_t, \quad x_t \in \R^d
\end{equation}
where $w_t$ is an $\R^d$-valued stationary Gaussian process with $(N-1)$-times continuously differentiable sample paths a.s. and $f: \R^d \to \R^d$ is an $(N-1)$-times continuously differentiable vector field, and the initial condition for the SDE $x_0$ is distributed according to the Lebesgue measure. (What follows can be adapted to initial conditions that follow a probability distribution). In addition, we assume that observations are sampled as a function of latent variables plus some noise, which constitutes the likelihood
\begin{equation}
\label{eq: likelihood model}
y_t=g\left(x_t\right)+z_t, \quad y_t \in \R^{m},
\end{equation}
where $g: \R^d\to \R^{m}$ is $M$-times continuously differentiable, and $z_t$ is an $\R^m$-valued stationary Gaussian process with $M$-times continuously differentiable sample paths a.s., and we choose $M \leq N$ for later.

\subsubsection{Reformulation in generalised coordinates}

We reformulate the dynamics in generalised coordinates, which will simplify matters later. For the prior model we have, for all $t \in \T$
\begin{equation}
\mathbf D'  \bx_t =\mathbf f(\bx_t)+ \bw_t, \quad \bw_t\sim \mathcal N(0,\mathbf \Sigma^w), 
\end{equation}
with $\mathbf D'\bx_t, \bw_t$ random variables on $\G^{d,N-1}$, $\bx_t$ on $\G^{d,N}$ and where $\mathbf \Sigma^w \in \Mat_{Nd,Nd}$ is given in terms of the autocovariance of $w$ as \eqref{eq: covariance gen fluctuations}. As for the likelihood model
\begin{equation}
\by_t = \mathbf{g}(\bx_t)+\bz_t, \quad \bz_t\sim \mathcal N(0,\mathbf\Sigma^z),
\end{equation}
with $\by_t, \bz_t$ random variables on $\G^{d,M}$ and $\mathbf\Sigma^z \in \Mat_{(M+1)d,(M+1)d}$ is given in terms of the autocovariance of $z$ as \eqref{eq: covariance gen fluctuations}, and the likelihood function in generalised coordinates $\mathbf{g}$ is obtained analogously to \cref{def: gen flow}. Specifically, the generalised likelihood is the function 
\begin{equation}
    \mathbf g: \G^{d,N}\to \G^{d,M}, \mathbf g(\rmz^{(:N)})\triangleq \begin{pmatrix}
    \mathrm{g}^{(0)}(\rmz^{(0)}) &\ldots& \mathrm{g}^{(M)}(\rmz^{(:M)})
\end{pmatrix}^\top
\end{equation}
where each component $\mathrm{g}^{(m)}: \G^{d,m}\to \R^d$ is defined implicitly through the relation 
\begin{equation}
    \mathrm{g}^{(m)}\left(x_t, \frac{d}{dt}x_t, \ldots, \frac{d^m}{dt^m}x_t\right)\triangleq\frac{d^m}{dt^m}g(x_t).
\end{equation}
See \eqref{eq: gen filt linearised flows} for the expression of $\mathbf g$ under the local linear approximation~\ref{ap: local lin approx}.

Altogether this furnishes a generative model under which we can do Bayesian inference:\footnote{Note that the prior here is an unnormalised distribution and becomes a probability distribution as soon as one specifies a probability measure for the initial condition in \eqref{eq: prior model}.}
\begin{equation}
\label{eq: gen mod}
\begin{split}
        p(\by_t,\bx_t)&\triangleq p(\by_t\mid \bx_t)p(\bx_t)\\
           p(\by_t\mid \bx_t)&=\mathcal N\left(\by_t ;\mathbf{g}(\bx_t),\mathbf\Sigma^z\right) \\ \quad p(\bx_t)&=\mathcal N\left(\mathbf D'  \bx_t -\mathbf f(\bx_t);0,\mathbf\Sigma^w\right).
\end{split}
\end{equation}

\subsection{Approximate Bayesian inference under the Laplace approximation}

We have the following Bayesian inference problem:
\begin{equation}\label{bayes_rule}
p(\bx_t\mid \by_t )=\frac{p(\by_t\mid \bx_t)p(\bx_t)}{p(\by_t)}
\end{equation}

\subsubsection{Variational Bayesian inference}

And we want to compute---or approximate---the posterior on the left. We do this through variational Bayesian inference \cite{bleiVariationalInferenceReview2017,bishopPatternRecognitionMachine2006}, where we approximate the posterior distribution with an approximate posterior distribution $q(\bx_t)$, by minimizing the following free energy functional:
\begin{equation}
\label{eq: VFE_def}
\begin{split}
\F(q; \by_t)
&\triangleq \E_q[\log q(\bx_t)-\log p(\by_t,\bx_t)]\\
&=\underbrace{\E_q[-\log p(\by_t,\bx_t)]}_{\text{Expected energy}}-\underbrace{\E_q[-\log q(\bx_t)]}_{\text{Entropy}}\\
&= \dkl\left[  q(\bx_t)   \mid   p(\bx_t\mid \by_t) \right]-\log p(\by_t)\\
&\geq -\log p(\by_t)
\end{split}
\end{equation}
In the penultimate line we can see that the free energy considered here is the KL divergence \cite{kullbackInformationSufficiency1951}---or relative entropy---between the approximate posterior and the true posterior, plus a constant. In other words, the free energy is minimized when and only when the approximate posterior equals the true posterior:
\begin{equation}
    \delta_q \F(q; \by_t)=0\iff  q(\bx_t) =  p(\bx_t\mid \by_t).
\end{equation}

\subsubsection{The Laplace approximation}

We are left with computing the free energy. The expected energy term in \eqref{eq: VFE_def} is generally challenging to compute exactly and a standard approximation is the Laplace approximation \cite{mackayBayesianInterpolation1992,fristonVariationalFreeEnergy2007,williamsBayesianClassificationGaussian1998}. In the Laplace approximation, we approximate the energy $V$ 
\begin{equation}\label{eq: energy}
    \begin{split}
        V(\by_t,\bx_t)&\triangleq -\log p(\by_t,\bx_t)\\
  &\doteq \frac{1}{2}\left(\by_t - \mathbf g(\bx_t)\right)^\top (\mathbf\Sigma^z)^{-1} \left(\by_t - \mathbf g(\bx_t)\right) +\frac{1}{2}\left(\mathbf D'  \bx_t -\mathbf f(\bx_t)\right)^\top (\mathbf\Sigma^w)^{-1}\left(\mathbf D'  \bx_t -\mathbf f(\bx_t)\right)
    \end{split}
\end{equation}

by a second order Taylor expansion at the mean of the approximate posterior $\bmu\triangleq \mu^{(:N)}\triangleq \E[q]$. From \eqref{eq: gen mod}:
\begin{equation}
\label{eq: Laplace approximation}
\begin{split}
    V(\by_t,\bx_t)&\approx V(\by_t,\bmu)+\nabla_{\bmu} V(\by_t,\bmu)^\top\left(\bx_t-\bmu\right)+\frac 12\left(\bx_t-\bmu\right)^\top\nabla_{\bmu}^2 V(\by_t,\bmu)\left(\bx_t-\bmu\right).
\end{split}
\end{equation}

One can equivalently say that the Laplace approximation approximates the true posterior distribution by a Gaussian centered at $\bmu$. Indeed, by Bayes rule, the energy and the negative log posterior distribution are equal up to a constant, and the negative log posterior is quadratic if only if the posterior is Gaussian. Under the Laplace approximation we therefore choose our approximate posterior to also be Gaussian:
\begin{equation}
q(\bx_t; \bmu, \bSigma) \triangleq \mathcal N(\bx_t;\bmu,\bSigma)= \frac{1}{\sqrt{(2\pi)^{d(N+1)} \det(\bSigma) }} \exp \left( -\frac{1}{2}(\bx_t - \bmu)^\top \bSigma^{-1} (\bx_t - \bmu)\right).
\end{equation} 
Under this parametric form, the free energy becomes a function of the parameters of the approximate posterior. The Laplace approximation is useful because the expected energy and entropy terms that form the free energy \eqref{eq: VFE_def} become straightforward to evaluate:
\begin{align}
\E_q[V(\by_t,\bx_t)]
    &\approx\E_q[V(\by_t,\bmu)]+ \E_q[\nabla_{\bmu} V(\by_t,\bmu)^\top\left(\bx_t-\bmu\right)]+\frac{1}{2}\E_q[\left(\bx_t-\bmu\right)^\top\nabla_{\bmu}^2 V(\by_t,\bmu)\left(\bx_t-\bmu\right)]\nonumber\\
    &=V(\by_t,\bmu)+\frac{1}{2}\tr [\nabla_{\bmu}^2 V(\by_t,\bmu)\bSigma] \label{eq: expected energy}\\
    \mathbb{E}_q[-\log q( \bx_t; \bmu, \bSigma)]
&=\frac{1}{2} \log \operatorname{det}\bSigma+\frac{d(N+1)}{2} \log (2 \pi \mathrm{e}) \label{eq: entropy}
\end{align}
where the latter is the well-known entropy of a Gaussian distribution.

Consequently, we obtain a simple expression for the free energy under the Laplace approximation $\FL$, by subtracting entropy \eqref{eq: entropy} from the approximated expected energy \eqref{eq: expected energy}, cf. \eqref{eq: VFE_def}:
\begin{equation}
\begin{split}
    \F(\bmu,\bSigma ; \by_t)&\approx \FL(\bmu,\bSigma ; \by_t)
    \triangleq V(\by_t,\bmu)+\frac{1}{2}\tr \left[\nabla_{\bmu}^2 V(\by_t,\bmu)\bSigma\right]-\frac{1}{2} \log \operatorname{det}\bSigma-\frac{d(N+1)}{2} \log (2 \pi \mathrm{e}).
\end{split}
\end{equation}

It remains to find the value of the parameters $\bmu, \bSigma$ that minimise the Laplace free energy, corresponding to the parameterisation of the approximate posterior that is closest to the true posterior.

\subsubsection{Optimising the covariance}

We can find the optimal covariance $\bSigma^*$ by computing the free energy gradient (using Jacobi's formula):
\begin{equation}
    \begin{split}
        \nabla_\bSigma\FL(\bmu,\bSigma ; \by_t)&=\frac{1}{2}\nabla_\bSigma\tr \left[\nabla_{\bmu}^2 V(\by_t,\bmu)\bSigma\right] -\frac{1}{2}\nabla_\bSigma\log \operatorname{det}\bSigma\\
        &=\frac{1}{2}\nabla_{\bmu}^2 V(\by_t,\bmu)-\frac{1}{2}\bSigma^{-1}\\ 
    \end{split}
\end{equation}
and seeing when it vanishes:
\begin{equation}
\label{eq: optimal covariance}
    \nabla_\bSigma\FL(\bmu,\bSigma ; \by_t)=0\iff \bSigma=\bSigma^*(\by_t,\bmu)\triangleq \left (\nabla_{\bmu}^2 V(\by_t,\bmu)\right)^{-1}.
\end{equation}

It follows that the Laplace free energy evaluated at the optimal covariance reads as follows:
\begin{equation}
\label{eq: laplace FE optimal cov}
    \begin{split}
        \FL(\bmu,\bSigma^* ; \by_t)
        &=V( \by_t,\bmu)+\frac{1}{2} \log \operatorname{det}\nabla_{\bmu}^2 V(\by_t,\bmu)-\frac{d(N+1)}{2} \log (2 \pi \mathrm{e}).
    \end{split}
\end{equation}

\subsubsection{Optimising the mean}

The mean $\bmu_t$ should evolve according to a similar dynamic as $\dot {\bx}_t =\mathbf D\bx_t$, as generalised coordinates of motion encode information about the higher order derivatives of the process and thus its time evolution. The optimal mean, in addition, should minimise Laplace free energy \eqref{eq: laplace FE optimal cov}.
Generalised filtering approaches this by a generalised gradient descent on Laplace free energy \eqref{eq: laplace FE optimal cov} with hyperparameter $\lambda>0$:
\begin{equation}
\label{eq: non-linear gen filt}
\begin{split}
        \dot {\bmu}_t &=\mathbf D\bmu_t -\lambda\nabla_{\bmu} \FL(\bmu_t,\bSigma^* ; \by_t) \\
    &=\mathbf D\bmu_t - \lambda\nabla_{\bmu} V( \by_t,\bmu_t)-\frac{\lambda}{2}\nabla_{\bmu} \log \det\nabla_{\bmu}^2 V(\by_t,\bmu_t).
\end{split}
\end{equation}
The first term in the flow $\mathbf D\bmu_t$ ensures that generalised coordinates are properly integrated in time so that they remain semantically meaningful as higher order derivatives; the second term $-\lambda\nabla_{\bmu} \FL(\bmu_t,\bSigma^* ; \by_t)$ is a potential term, ensuring that the generalised motion evolves toward the free energy minimum; $\lambda>0$ is a hyperparameter specifying the strength of the potential term, cf. Section~\ref{sec: path integral via Lagrangian method}. Readers familiar with higher order optimisation and sampling methods may find that they operate under the same principles as \eqref{eq: non-linear gen filt}: for example, in underdamped Langevin dynamics \cite{pavliotisStochasticProcessesApplications2014,barpGeometricMethodsSampling2022a} higher orders of motion are integrated in time using the first term of \eqref{eq: non-linear gen filt}, and the generalised motion is further influenced by a potential term. Indeed, under static data (i.e. constant $t \mapsto \by_t$) the potential term in \eqref{eq: non-linear gen filt} is static and generalised filtering reduces to an underdamped, high order optimisation scheme. In general though, generalised filtering is a higher order optimisation scheme on a moving target.

In \eqref{eq: non-linear gen filt}, the term $\nabla_{\bmu} V( \by_t,\bmu)$ can be interpreted as a weighted sum of generalised prediction errors (from \eqref{eq: energy}):
\begin{equation}
\label{eq: weighted sum of prediction errors}
\begin{split}
        \nabla_{\bmu} V( \by_t,{\bmu})
        &=(\nabla \mathbf g({\bmu}))^\top(\mathbf\Sigma^z)^{-1}( \mathbf g({\bmu})-\by_t)+(\mathbf D' -\nabla \mathbf f({\bmu} ))^\top (\mathbf\Sigma^w)^{-1} (\mathbf D'{\bmu} - \mathbf{f}({\bmu} )).
\end{split}
\end{equation}

The latter term in the filtering dynamic \eqref{eq: non-linear gen filt} can be rewritten component-wise via Jacobi's formula as 
      \begin{equation}
      \label{eq: FE grad is third order}
        \partial_{\bmu_i}\log \det\nabla_{\bmu}^2 V(\by_t,\bmu)= \tr\left(\left (\nabla_{\bmu}^2 V(\by_t,\bmu)\right)^{-1}\partial_{\bmu_i}\nabla_{\bmu}^2 V(\by_t,\bmu)\right).
      \end{equation}

Therefore, \textit{generalised filtering \eqref{eq: non-linear gen filt}
 corresponds to a third order optimisation scheme on the energy} $V( \by_t,\bmu)$.

\begin{remark}[Domain of definition]
    It is important to realise that the Laplace free energy \eqref{eq: laplace FE optimal cov} is defined for $\bmu$'s such that $\det\nabla_{\bmu}^2 V(\by_t,\bmu)>0$, 
however, its gradient can be extended through \eqref{eq: FE grad is third order} to the much wider region of $\bmu$'s such that $\det\nabla_{\bmu}^2 V(\by_t,\bmu)\neq 0$, which makes the simulation of \eqref{eq: non-linear gen filt} straightforward. Indeed, the continuous-time solution to generalised filtering \eqref{eq: non-linear gen filt} is continuous and cannot jump from a region where $\det\nabla_{\bmu}^2 V(\by_t,\bmu)<0$ to another where $\det\nabla_{\bmu}^2 V(\by_t,\bmu)>0$, such as a neighbourhood of the free energy minimum, as in between lies a singularity where the free energy gradient is not defined. But this issue becomes moot when numerically integrating \eqref{eq: non-linear gen filt} since discrete steps generally avoid such singularities which are a null set. This allows us to initialise \eqref{eq: non-linear gen filt} almost anywhere---that is, outside such singularities.
\end{remark}

\subsection{Summary of generalised filtering}
\label{sec: summary gf}

In summary, generalised filtering dynamically updates a Gaussian approximate posterior belief $q$ over hidden variables $\bx_t$ as one samples a time-series of observations $t\mapsto \by_t$. This dynamical updating is based upon a generative model $p(\by_t,\bx_t)$ and its associated energy function $V=-\log p$ as follows:
\begin{equation}
\label{eq: gen filt}
\begin{split}
    q(\bx_t; \bmu_t, \bSigma_t) &\triangleq\mathcal N(\bx_t;\bmu_t,\bSigma_t)\approx p(\bx_t \mid \by_t)\\
    \dot {\bmu}_t &=\mathbf D\bmu_t - \lambda\nabla_{\bmu} V( \by_t,\bmu_t)-\frac{\lambda}{2}\nabla_{\bmu} \log \det\nabla_{\bmu}^2 V(\by_t,\bmu_t)\\
    \bSigma_t&=\left (\nabla_{\bmu}^2 V(\by_t,\bmu_t)\right)^{-1}
\end{split}
\end{equation}
where the initial condition $\bmu_0$ is arbitrary and $\lambda>0$ is a hyperparameter scoring to what extent the solution is forced back to the free energy minimum. The generative model is given by \eqref{eq: gen mod} and the gradient of the log determinant term has the closed form \eqref{eq: FE grad is third order}. The dynamics \eqref{eq: gen filt} are simulated throughout the time interval spanning the time-series of observations $t\mapsto \by_t$. We defer further implementation details to \cref{sec: implementation details}.

\begin{remark}
In virtue of the second line of \eqref{eq: gen filt}, generalised filtering entails a third-order optimisation scheme for the mean of our posterior belief. However, to the extent that the Laplace approximation \eqref{eq: Laplace approximation} is valid, this third order term is approximately zero and is often neglected in practice so that the scheme becomes first order. We will now see what such simplifications are afforded by the local linear approximation.  
\end{remark}

\subsection{Simplifications under the local linear approximation}
\label{sec: simplification local linear}

We now show how generalised filtering looks under the local linear approximation~\ref{ap: local lin approx}, which simplifies equations considerably. First of all, recall that the local linear approximation involves ignoring all derivatives of the flows $f,g$ of orders higher than one---that is, assuming that they vanish. We employ this assumption throughout this section.

Under the local linear approximation~\ref{ap: local lin approx}, the generalised flows read as:

\begin{equation}
\label{eq: gen filt linearised flows}
    \mathbf f(\bx_t)=\begin{pmatrix}
        f\left(\rmx_t^{(0)}\right)\\
    \nabla f\left(\rmx_t^{(0)}\right)\rmx_t^{(1)}\\
    \vdots \\
        \nabla f\left(\rmx_t^{(0)}\right)\rmx_t^{(N)}
    \end{pmatrix}, \quad \mathbf{g}(\bx_t )=\begin{pmatrix}
        g\left(\rmx_t^{(0)}\right)\\
    \nabla g\left(\rmx_t^{(0)}\right)\rmx_t^{(1)}\\
    \vdots \\
        \nabla g\left(\rmx_t^{(0)}\right)\rmx_t^{(M)}
    \end{pmatrix}
\end{equation}

The gradient of the energy \eqref{eq: weighted sum of prediction errors} has the same functional form, but the Jacobians of the generalised flows within are given by considerably simpler expressions:

\begin{equation}
\label{eq: local linear large Jacobians}
\begin{split}
    \underbrace{\nabla \mathbf f({\bmu})}_{\Mat_{d(N+1), d(N+1)}}=\underbrace{\nabla f(\mu^{(0)})}_{\Mat_{d, d}}\otimes \operatorname{I}_{N+1},\:    \underbrace{\nabla \mathbf g({\bmu})}_{\Mat_{m(M+1), d(N+1)}}&=\underbrace{\nabla g(\mu^{(0)})}_{\Mat_{m, d}}\otimes \underbrace{\begin{pmatrix}
       1&&&&\overbrace{0}^{\Mat_{M+1, N-M}}\\
        & 1&&&0\\
        & &\ddots &&\vdots\\
        & &&1&0
    \end{pmatrix}}_{ \Mat_{M+1,N+1}}
\end{split}
\end{equation}

The Hessian of the energy has a simpler form:
\begin{equation}
\label{eq: energy Hessian local lin}
\begin{split}
        \nabla_{\bmu}^2 V( \by_t,\bmu)
        &=(\nabla \mathbf g({\bmu}))^\top(\mathbf\Sigma^z)^{-1}\nabla \mathbf g({\bmu})+(\mathbf D' -\nabla \mathbf f({\bmu} ))^\top (\mathbf\Sigma^w)^{-1} (\mathbf D' -\nabla \mathbf f({\bmu} ))
\end{split}
\end{equation}
The coefficients of this Hessian are affine linear combinations of products of first order derivatives of the flows $f,g$. Consequently all third order derivatives of the energy vanish:

\begin{equation}
\label{eq: 3rd order derivs vanish}
        \nabla_{\bmu}^3 V( \by_t,{\bmu})\equiv 0.
\end{equation}
This means that the the local linear approximation~\ref{ap: local lin approx} implies the Laplace approximation \eqref{eq: Laplace approximation}:
\begin{equation}
    \F=\FL.
\end{equation}
The optimal covariance $\bSigma^*$ is given as the inverse of \eqref{eq: energy Hessian local lin}, cf. \eqref{eq: optimal covariance}. And finally, the free energy gradient---evaluated at $\bSigma^*$---is just the energy gradient:
\begin{equation}
    \nabla_{\bmu} \FL(\bmu,\bSigma^* ; \by_t) 
    =\nabla_{\bmu} V( \by_t,\bmu)
\end{equation}
This is because the gradient of the log determinant term in the free energy \eqref{eq: laplace FE optimal cov} is a third order derivative of the energy \eqref{eq: FE grad is third order}, which vanishes under the local linear approximation by \eqref{eq: 3rd order derivs vanish}.

\subsection{Summary of local linearised generalised filtering}
\label{sec: local linear gf}

In summary, generalised filtering under the local linear approximation~\ref{ap: local lin approx} optimises a Gaussian approximate posterior belief $q$ over hidden variables $\bx_t$ as one samples a time-series of observations $t\mapsto \by_t$.  This dynamical updating is based upon a generative model $p(\by_t,\bx_t)$ \eqref{eq: gen mod} and its associated energy function $V=-\log p$ with flows $\mathbf f, \mathbf g$ \eqref{eq: gen filt linearised flows}, as follows:
\begin{equation}
\label{eq: linear gen filt}
\begin{split}
    q(\bx_t; \bmu_t, \bSigma_t) &\triangleq\mathcal N(\bx_t;\bmu_t,\bSigma_t)\approx p(\bx_t \mid \by_t)\\
    \dot {\bmu}_t &=\mathbf D\bmu_t - \lambda\nabla_{\bmu} V( \by_t,\bmu_t)\\
    \bSigma_t&=\left (\nabla_{\bmu}^2 V(\by_t,\bmu_t)\right)^{-1}
\end{split}
\end{equation}
where the initial condition $\bmu_0$ is arbitrary, $\lambda>0$ is a hyperparameter measuring to what extent the solution is forced back to the free energy minimum, the gradient and Hessian of the energy $V$ are given by \eqref{eq: weighted sum of prediction errors}, \eqref{eq: energy Hessian local lin} respectively, and the Jacobians of the generalised flows $\nabla\mathbf f, \nabla\mathbf g$ 
within are given by \eqref{eq: local linear large Jacobians}. As with the non-linearised version, the dynamics \eqref{eq: gen filt} are simulated throughout the time interval spanning the time-series of observations $t\mapsto \by_t$.

\subsection{Practical and state-of-the-art implementation}
\label{sec: implementation details}

Here we discuss the choices that underlie any practical implementation of generalised filtering. We begin with an overview of the algorithm and then offer further details for each step.

\subsubsection{Summary of algorithm}

\textit{Offline:} the following steps are undertaken prior to sampling observations:
\begin{enumerate}
    \item \textbf{Choosing a generative model:} Our generative model constitutes our prior assumptions or domain knowledge about the data generating process. This includes a choice of flow functions $f, g$ and noise fluctuations in \eqref{eq: prior model}, \eqref{eq: likelihood model}, with, possibly, priors on their parameters, and may be hierarchical.
    \item \textbf{Choosing the order:} $M,N$ for expressing observations and latent states in generalised coordinates.
    \item \textbf{Choosing a generalised filtering method:} either the full (non-linear) version (\cref{sec: summary gf}) or the local linearised version (\cref{sec: local linear gf}).
    \item[$\Rightarrow$] Symbolic or automatic differentiation then supplies a generative model in generalised coordinates, e.g. \eqref{eq: gen mod}, and its associated potential function $V$, where these depend on whether one assumed a local linear approximation (i.e. \cref{sec: simplification local linear}).
    \item \textbf{Initialise hyperparameters and initial conditions:} e.g. the sampling rate of observations $dt>0$, and the posterior belief $q$ (initialised as a standard multivariate Gaussian).
\end{enumerate}

\textit{Online:} the following loop is repeated for each observation datapoint.
\begin{enumerate}
\setcounter{enumi}{4}
    \item \textbf{Sample observation:} $y_t$.
    \item \textbf{Data embedding in generalised coordinates:} embed $y_t$ in generalised coordinates up to order $M$, to obtain a generalised observation vector $\by_t$.
    \item \textbf{Filtering:} Numerically integrate the generalised filtering equations of motion (i.e., \eqref{eq: gen filt} or \eqref{eq: linear gen filt}) over the time interval $[t,t+dt)$, where the initial condition is the mean of the previous approximate posterior belief $q$.
    \item \textbf{Output:} Filtered distribution $q$ over the latent states (and parameters) of the generative model.
\end{enumerate}

\subsubsection{Numerical integration scheme}

Perhaps the most important implementation choice is the numerical integrator for the dynamics of the (generalised) mean $\bmu$ in \eqref{eq: gen filt} and \eqref{eq: linear gen filt}. When the model's flow $f$ and likelihood function $g$ are both linear, the Euler integrator might yield satisfactory integration with small step-sizes. However in the presence of any non-linearities this integration problem is usually stiff and requires a more accurate solver with an adaptive step-size. Here two choices are the Runge–Kutta–Fehlberg \cite{hairerSolvingOrdinaryDifferential2008} and the (recommended) more accurate albeit more expensive Ozaki solver \cite{ozakiBridgeNonlinearTime1992,fristonVariationalTreatmentDynamic2008}. 


\subsubsection{Data embedding in generalised coordinates}

The generalised filtering schemes assume a time-series of observations in generalised coordinates $t \mapsto \by_t$ up to some order $M$ ($0\leq M\leq N$) as input. It is often the case that we do not possess the higher order motion of our observations, i.e. we operate in the absence of sensors for the velocity, acceleration, or higher orders of motion of our observations. Even if we have this information, we may want to obtain even higher orders of motion numerically.

Here we discuss how to obtain higher orders of motion numerically from zero-th order data. (Obtaining them from higher order observations is analogous). The most elemental way of recovering these higher orders is through finite differences, by setting
\begin{equation}
\begin{split}
     \rmy^{(1)}_t&\leftarrow \frac{  \rmy^{(0)}_t-  \rmy^{(0)}_{t-dt}}{dt},\quad \ldots \quad 
     \rmy^{(M)}_t\leftarrow \frac{\rmy^{(M-1)}_t-  \rmy^{(M-1)}_{t-dt}}{dt}
\end{split}
\end{equation}
where $dt$ is the length of the time interval  between each observation (i.e.~the sampling rate). 

The more sophisticated way of obtaining the higher orders of motion of observations is by inverting a Taylor expansion. Since generalised coordinates are based on Taylor expansions, this is the method that is most compatible with generalised filtering. The Taylor polynomial associated with the (as yet unknown) generalised coordinate vector $\by_t$ 
is a polynomial of order $M$, which can be taken to fit $(M+1)$ points exactly. Take these points to be the known vectors
$(\rmy^{(0)}_t,\rmy^{(0)}_{t-dt},\ldots,\rmy^{(0)}_{t-Mdt})$ and $\by_t$ is uniquely determined. Explicitly, this is solving the following system of linear equations:
\begin{equation}
\label{eq: inverse taylor expansion}
    \rmy^{(0)}_{t-idt}=\sum_{n=0}^{M}\rmy^{(n)}_{t}
\frac{(-idt)^n}{n!},\quad \forall i=0,1,\ldots ,M
\end{equation}
for the unknowns $\rmy^{(1)}_t,\ldots,\rmy^{(M)}_t$. This is straightforward by inverting the matrix of Taylor coefficients.

\subsubsection{Sampling rate of observations}

In practice, the length of the time interval between each observation data point $dt$ can also be chosen (at the very least by down-sampling the time-series). This sampling rate should be chosen such that consecutive observations are most informative of the autocovariance of the observation process. If the sampling rate is too small, consecutive data will be very smooth; and conversely if the sampling rate is too large, consecutive data will be very rough. One usually sets the sampling rate to the intrinsic time-scale of the process, which can be taken to be the first zero-th crossing of the autocovariance of the observed time series.

\subsubsection{Choosing the order}
    \label{sec: order selection}

To select the order for generalised coordinates, one can leverage Taken's third embedding theorem \cite[Th.~3]{takensDetectingStrangeAttractors1981}. This theorem states that a smooth attractor observed through a smooth univariate map can be embedded in a real vector space through its successive derivatives. This embedding is guaranteed to exist when the dimension of the embedding space is more than  twice the (box-counting) dimension of the attractor~\cite{sauer1991embedology}. In other words, using generalised coordinates of order $2d$ is sufficient to capture the dynamics of (smooth) systems of dimension at most $d$. For instance, the Lorenz attractor has a box-counting dimension of approximately $2.06$~\cite{viswanath2004fractal}, and thus can be embedded using generalised coordinates of order $5$. In practice, a lower embedding order might be sufficient to achieve a satisfactory reconstruction accuracy. 
The optimal order can be determined by plotting the free-energy (over time) against the embedding order, and selecting the order that optimises the free-energy (through time). This yields the order with that best trades off accuracy of the filtered trajectory with model complexity. Indeed, this approach is recommended for selecting hyperparameters as well as the model itself, as discussed next. 

\subsubsection{Choosing a generative model and hyperparameters}

Two important questions are what constitutes an appropriate choice of generative model for a given natural phenomenon (e.g. choice of noise in \eqref{eq: prior model}) and what hyperparameters to use when implementing a generalised filtering method. This can be framed as a problem of (Bayesian) model selection, whereby the functional form of the generative model and the routine's hyperparameters can be optimised to minimise the (time integral of) free energy. This is because the free energy \eqref{eq: VFE_def} equals complexity minus accuracy
\begin{equation}
\label{eq: comp acc fe}
\begin{split}
\F(q; \by_t)
&= \underbrace{\dkl[q(\bx_t) \mid p(\bx_t)]}_{\text{Complexity}}-\underbrace{\E_q[-\log p(\by_t \mid \bx_t)]}_{\text{Accuracy}},
\end{split}
\end{equation}
whereby maximising accuracy entails finding explanations $q$ for the data that are maximally accurate (i.e.~maximum likelihood), while minimising complexity ensures that these explanations are as simple as possible above and beyond the prior (cf. Occam's razor). In other words, the free energy \eqref{eq: VFE_def} is an objective function that jointly scores the quality of our generative model and inference scheme as an explanation for our data. 

In practice, identifying an appropriate generative model for accurately explaining an empirical phenomenon is usually the most challenging and critical prerequisite to successfully applying model-based methods such as (extended) Kalman filtering, model predictive control and generalised filtering. This difficulty reflects a deeper principle: the broader scientific endeavor itself can be viewed as the search for generative models that effectively describe and predict our empirical observations.


\subsubsection{Hierarchical models and parameter inference}
\label{sec: extensions GF}

Here we have presented generalised filtering in its simplest form, with a focus on mathematical clarity, but there are many ways in which it has been extended. Perhaps most important are parameter inference of the functions $f, g$ and fluctuations $w,z$ (i.e.~their generalised covariance $\bSigma^w,\bSigma^z$), and extensions to hierarchical generative models which include processes that evolve at multiple timescales of fast and slow (i.e.~multi-scale processes \cite{pavliotisMultiscaleMethodsAveraging2010}).\footnote{Without these parameter inference extensions generalised filtering is equivalent to dynamic expectation maximisation \cite{fristonVariationalTreatmentDynamic2008}.}

\subsubsection{State-of-the-art implementation}
\label{sec: sota implementation}

The state-of-the-art implementation of generalised filtering can be found in the SPM academic software under the script \emph{DEM.m} freely available here \url{https://www.fil.ion.ucl.ac.uk/spm/software/}. This has been optimised for nearly two decades for numerical stability by the neuroimaging community. (We will recapitulate implementations, which include versions in Python, in the software note, see Section~\ref{sec: conclusion}.
In this implementation, the method for numerical integration of generalised filtering is the Ozaki solver \cite{ozakiBridgeNonlinearTime1992,fristonVariationalTreatmentDynamic2008}, the method for embedding data into generalised coordinates is \eqref{eq: inverse taylor expansion}, and the order of motion for the data $M$ is always chosen to be equal to the order of motion for the latent states $N$. This implementation always uses the local linear approximation in generalised filtering, but has not been bench-marked against the non-linearised version---this could be the subject of future work. Nevertheless, there is a reason why the local linear approximation makes sense in the context of generalised filtering with the Laplace approximation. Discarding all derivatives of the flow, of order higher than one, has the advantage of making the functional form of the energy \eqref{eq: energy} nearly quadratic, which licenses the Laplace approximation. Conversely, dispensing with the local linear approximation can make the Laplace approximation a poor approximation (especially when $f,g$ are highly non-linear) hindering the method's performance.

\subsection{Simulations and results}

We show the results of a simple simulation of generalised filtering on a partially observed stochastic Lorenz system. Despite its small-scale, this is a relatively challenging filtering problem because the stochastic Lorenz system exhibits stochastic chaos \cite{law_data_2016,reich_probabilistic_2015,fristonStochasticChaosMarkov2021}.

\textbf{The generative process:} The latent state of the data generating process $x_t=(x_{0,t},x_{1,t},x_{2,t})$ is three dimensional and evolves according to a stochastic Lorenz system; an SDE with flow
\begin{equation*}
    f(x_t)=\begin{pmatrix}
        18 (x_{1,t} - x_{0,t})\\
    46.92 x_{0,t} - 2 x_{2,t} x_{0,t} - x_{1,t}\\
    2 x_{0,t} x_{1,t} - 4 x_{2,t}
    \end{pmatrix}
\end{equation*}
and fluctuations a stationary Gaussian process with Gaussian autocovariance $w_t$ (i.e.~white noise convolved with a Gaussian). The data we observe $y_t$ is one-dimensional: the sum of all three components of $x_t$, i.e.
\begin{equation*}
    g(x_{t})=x_{0,t}+x_{1,t}+x_{2,t},
\end{equation*}
plus observation noise $z_t$, a one-dimensional stationary Gaussian process with Gaussian autocovariance. We perform experiments for different levels of state and observation volatilities.

\textbf{The generative model:} For simplicity we take the generative model to be equal to the generative process.

\textbf{Simulation:} We show the result of generalised filtering for different magnitudes of  observation noise (i.e.~noise volatility); in a regime with low observation noise (Figure~\ref{fig: DEM}, \textit{left}) and another with high observation noise (Figure~\ref{fig: DEM}, \textit{right}) using the state-of-the-art implementation detailed in Section~\ref{sec: sota implementation}, using $N=M=5$ orders of motion (twice the box-counting dimension of the Lorenz attractor, see~\cref{sec: order selection}). Please see Figure~\ref{fig: DEM} for an illustration of real and inferred states as well as their difference measured in phase-space Euclidean distance.

\textbf{Results:} The generalised filter is able to infer the latent state and reconstruct the attractor: in the regime of low observation noise the reconstruction is very accurate, while in the regime of high observation noise the reconstruction is accurate overall but misses many details, however this is to be expected given the low signal to noise ratio in this case.

\begin{figure}[h]
\centering\includegraphics[width=0.47\textwidth]{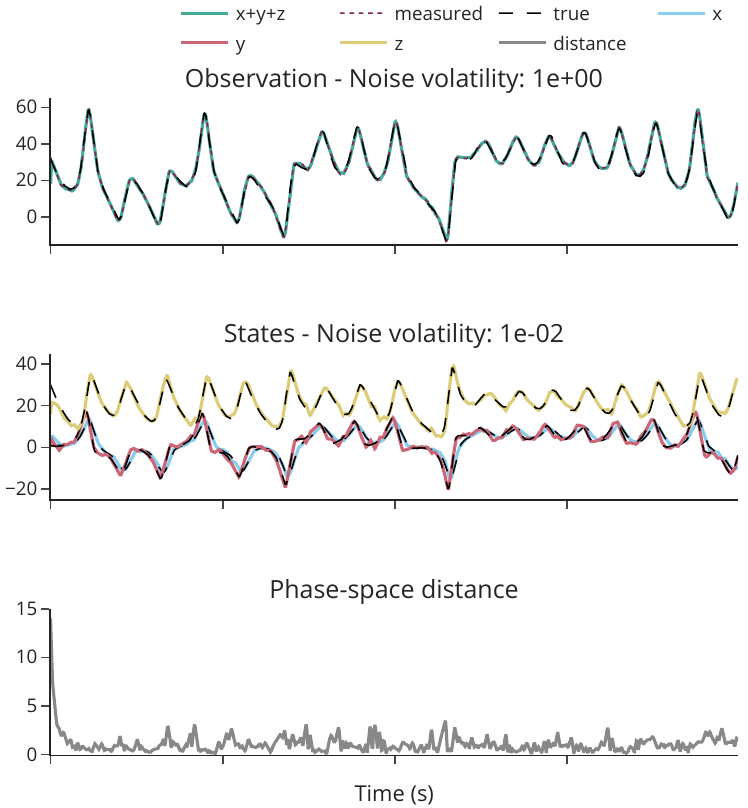}
\centering\includegraphics[width=0.47\textwidth]{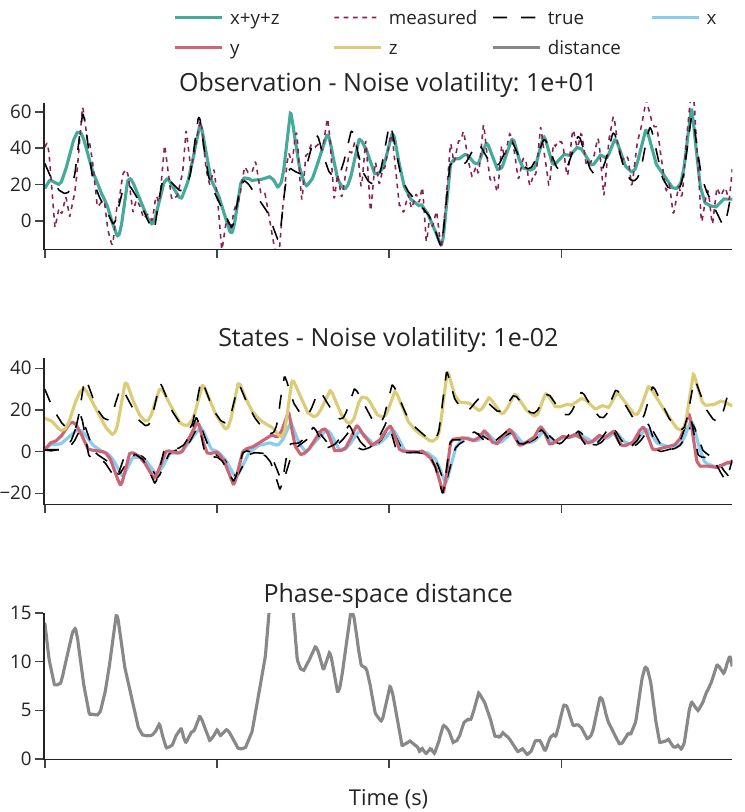}
\caption{\textbf{Generalised filtering for the Lorenz system.} This Figure illustrates a simulation of generalised filtering on a partially observed stochastic Lorenz system, in a regime of low and high observation noise (left and right panel respectively). Generalised filtering uses no particles and remains stable under strongly coloured observation noise. In the top panels, we see the one-dimensional time-series of noiseless observations in dashed black, corresponding to the sum of the three latent states. The data that were fed to the generalised filter comprised the \textit{measured} time-series in dashed purple; i.e. the dashed black time-series plus observation noise. In the middle panels, there are three dashed black time-series corresponding to the latent states, which evolve according to  a stochastic Lorenz system. In colour (yellow, red, blue) are the filtered time-series; that is, the most likely latent states at each time according to the approximate posterior belief $q$. In the top panel, the cyan time-series are  filtered observation time-series, obtained as the sum of the coloured time-series in the middle panels. The bottom panel scores the Euclidean distance between filtered time-series in latent space and true time-series.}
\label{fig: DEM}
\end{figure}

\textbf{Discussion:} This simulation uses a generative model that is equal to the generative process
to discard confounders such as model mis-specification when illustrating this method's ability. We refer the reader to \cite{fristonGeneralisedFiltering2010,bos2022free,meera2023adaptive,anil2021dynamic,balajiBayesianStateEstimation2011} for more realistic, higher-dimensional inferences in real world scenarios.

\chapter{Concluding remarks}
\label{chap: 4}


\section{Future directions}
\label{sec: future directions}

There are many directions that could attend future work:

\textbf{Contextualisation within the literature:} From a global perspective, we need a more comprehensive understanding of the convergence and differences between the theory of generalised coordinates and established approaches to the analysis and numerical treatment of stochastic differential equations with general classes of noise; for instance the relationship with standard stochastic analysis \cite{heSemimartingaleTheoryStochastic1992}, rough path theory \cite{frizCourseRoughPaths2020,frizMultidimensionalStochasticProcesses2010}, and the current theory of Markovian realisation \cite{lindquistLinearStochasticSystems2015,mitterTheoryNonlinearStochastic1981,tayorNonlinearStochasticRealization1989}.

\textbf{Generalising theoretical results:} From a technical perspective, many of the technical assumptions used throughout this work could be loosened and results generalised. For instance, one can presumably extend this theory to include equations with \textit{multiplicative noise}, and use a similar local linearisation procedure for the noise to control the number of terms in the generalised coordinates expansion for analysis and numerics.\footnote{The Wong-Zakai approximation theorems of \cref{sec: wong zakai} generalise to multiplicative noise SDEs in a slightly unexpected way, as was explained in \cref{footnote: wong}.} 
Another important extension would be to \textit{multi-scale SDEs} \cite{pavliotisMultiscaleMethodsAveraging2010}.\footnote{The Wong-Zakai theorems also generalise to approximating processes with SDEs that have multiple characteristic time-scales, albeit in another surprising way: the limiting SDE depends on the relative magnitude of the timescales of the approximating processes, and a whole one parameter family of limiting SDEs is possible~\cite{KupPavlSt04}. This finding has also been confirmed experimentally in noisy electric circuits~\cite{PMcWV_2013}.}
Additionally, it should be straightforward to generalise the theory to accommodate \textit{non-stationary} Gaussian process noise.
\begin{example}
For example, it would be nice to be able to work with noise of the form $w_t=\sum_{i=1}^m \xi_i \cos \left(\lambda_i t\right), t \in \R,$
where $\xi_1, \ldots, \xi_m$ are i.i.d. standard normal random variables and $\lambda_1, \ldots, \lambda_m\geq 0$ are a finite set of frequencies. This process is a mean-zero, non-stationary Gaussian process with analytic sample paths. Random Fourier series of this form arise naturally from the Karhunen-Lo\'{e}ve expansion~\cite[Sec. 1.4]{pavliotisStochasticProcessesApplications2014}. 
\end{example}

\cite[Theorem 4 and Corollary 1, p224-225]{gikhmanTheoryStochasticProcesses2004} in combination with Lemma~\ref{lemma: autocov of generalised fluctuations} could be used to obtain the autocovariance of the generalised non-stationary process, and hence extend the whole construction. Lastly, 
the theory herein extends to arbitrary stochastic initial conditions, and the numerical integration methods to stochastic initial conditions that can be sampled from.

\textbf{Extending analysis:} There is potential to extend the analysis of the linear SDE in generalised coordinates to that of the local linearised SDE. In addition, \textit{it would be important to derive accuracy estimates for the local linearised version of an SDE compared to its non-linearised version} (in generalised coordinates). The latter may potentially be obtained by adapting error analyses on the local linearisation of SDEs from the numerical integration \cite{kloedenNumericalSolutionStochastic1992,de2007higher,jimenez1999simulation,biscay1996local,jimenez2009rate,jimenez2012convergence,de2010high} and Extended Kalman Filtering \cite{sarkka2023bayesian,jazwinski2007stochastic} literatures to our setting. Taken together these results would yield an approximate analytic solution to any SDE with analytic flow and sample paths, and, by the Wong-Zakai theorems, to fairly arbitrary SDEs.

\textbf{Accurate numerical simulation globally in time:} Numerical integration with generalised coordinates is accurate on short time intervals while established methods for numerical integration of SDEs driven by coloured noise (i.e.~approximation with a diffusion process in an extended state-space followed by standard numerical schemes) are accurate only on longer timescales. This begs the question as to whether these approaches could be combined in a multi-scale numerical scheme wherein integration steps of the latter methods are interpolated with the former, yielding accurate integration on short and long timescales.

\textbf{Refining generalised filtering implementations:} The theoretical foundation and derivation of generalised filtering presented in this paper shows the underlying assumptions and possible choices of implementation, such as committing to local linearisation versus non-linearisation. We hope that this treatment will help practitioners to refine existing---and devise improved---implementations. Practically, an intriguing project would be a comprehensive bench-marking of local linearised versus non-linearised generalised filtering. Another would be extending non-linearised generalised filtering to inferring parameters of the generative model---and to hierarchical generative models.

\textbf{Stochastic control via generalised coordinates:} Perhaps the largest omission from this paper are the established methods for stochastic control via generalised coordinates, developed in the area of continuous active inference \cite{fristonActionBehaviorFreeenergy2010,parrActiveInferenceFree2022}. Briefly, just as generalised filtering operates by minimising the quantity known as free energy, active inference includes an additional control variable in the generative model---the actions---which are optimised to minimise free energy as well. In other words, \textit{free energy becomes a unified objective for action and perception}.
Continuous active inference generalises many existing algorithms to control, such as PID control \cite{baltieriPIDControlProcess2019}, and may offer more robust and capable alternatives \cite{lanillosActiveInferenceRobotics2021,dacostaHowActiveInference2022a}. Continuous active inference can also be combined with discrete active inference \cite{dacostaActiveInferenceDiscrete2020,parrActiveInferenceFree2022} in continuous-discrete hierarchical generative models that can model human-level control \cite{priorelliModelingMotorControl2024,parrComputationalNeurologyMovement2021a}. We expect that a detailed treatment of continuous active inference building upon---and at a similar level of detail---as the treatment of generalised filtering herein (with, possibly, its integration with discrete active inference) could fuel theoretical and algorithmic advances in state-of-the-art control.

\section{Conclusion}
\label{sec: conclusion}

In this paper, we presented a theory for the analysis, numerical simulation, and filtering of stochastic differential equations with many times differentiable flows and fluctuations. This translates the problem of analysing trajectories of SDEs into the simpler problem of finding the higher order motion of the solution (velocity, acceleration, jerk etc) and recovering the trajectories as Taylor expansions or Taylor series. By the Wong-Zakai approximation theorems this can be used to analyse a wide range of stochastic differential equation driven a wide range of noise signals, which may or may not be Markovian.

The conclusion is that SDEs with analytic flows and fluctuations are the SDE analogs to analytic functions. Like analytic functions, they can be usefully expressed and studied by their Taylor series; analogously to analytic functions, they can uniformly approximate the solutions to rougher, continuous SDEs on finite time-intervals, where the Wong-Zakai theorems take the place of the Weierstrass approximation theorem. The computational amenability of Taylor expansions enables computationally straightforward methods for numerical simulation, filtering and control.  In this paper we formalised and developed these ideas, and argue that they have far-reaching implications throughout stochastic differential equations.

\subsubsection*{Software note}

All simulations are reproducible with freely available code: for Sections~\ref{sec: num int} and~\ref{sec: most likely path via Lagrangian} the code is available at \url{https://github.com/lancelotdacosta/Generalised_coordinates}; for Section~\ref{sec: GF}, the simulations can be reproduced with \url{https://colab.research.google.com/drive/1brsgc2gob-O_SjlgxyVg9R46VezIbP34?usp=sharing}. At the time of writing, the state-of-the-art implementation of generalised filtering can be found in the SPM academic software under the script \emph{DEM.m} freely available at \url{https://www.fil.ion.ucl.ac.uk/spm/software/}. We also draw attention to an implementation of generalised filtering in python available at \url{https://github.com/johmedr/dempy}.

\subsubsection*{Acknowledgements}

The authors thank Mehran H. Bazargani and Philipp Hennig for interesting discussions that contributed to improving this paper.

\subsubsection*{Funding statement}

LD is supported by the Fonds National de la Recherche, Luxembourg (Project code: 13568875). This publication is based on work partially supported by the EPSRC Centre for Doctoral Training in Mathematics of Random Systems: Analysis, Modelling and Simulation (EP/S023925/1). TP is supported by an NIHR Academic Clinical Fellowship (ref: ACF-2023-13-013). GP is partially supported by an ERC-EPSRC Frontier Research Guarantee through Grant No. EP/X038645, ERC
Advanced Grant No. 247031 and a Leverhulme Trust Senior Research Fellowships, SRF$\backslash$R1$\backslash$241055.

\bibliographystyle{abbrvnat} 
\bibliography{bib}

\appendix
\chapter{}
\section{Some frequently used notations}
\label{app: notation}


$\T$ time domain, open interval $\subset \R$, where usually we assume that $0\in \T$ (without loss of generality).


\textit{a.s.} almost surely.



$ \G^{d,n}$ the space of generalised coordinates \eqref{eq: space of generalised coordinates}.

$\mathbf D, \mathbf D'$ matrices in generalised coordinates \eqref{eq: generalised variables for gen cauchy problem}.

$\mathbf f$ generalised flow, see \cref{def: gen flow} (a vector field in generalised coordinates).

$\doteq$ equality up to an additive constant.

$\stackrel{\ell}{=}$ equality in law, a.k.a. equality in distribution.

$\nabla, \nabla^2$ Jacobian and Hessian, e.g. $\nabla f: \R^d \to \R^{d \times d}$ is the Jacobian of $f$.

$ L^p$ Lebesgue space.

$\Mat_{d,n}$ space of real-valued matrices with $d$ lines and $n$ columns.

\section{A technical Lemma}
\label{app: technical}

This Lemma is useful for the analysis of the linear equation; specifically, for deriving Proposition~\ref {lemma: cov convergence}:

\begin{lemma}\label{lemma: limsup equality}
Suppose $(a_n)_{n \geq 0}$, $(b_n)_{n\geq 0}$ are sequences of non-negative real numbers such that $b_n = \max_{0\leq i\leq n} a_i$. Then
\begin{equation}\label{eq:cvgence_equiv}
\limsup_{\substack{n+m\to\infty \\ n,m\geq 1}}\left(\frac{a_{n+m}}{n!m!}\right)^{\frac{1}{n+m}}=\limsup_{\substack{n+m\to\infty \\ n,m\geq 1}}\left(\frac{b_{n+m}}{n!m!}\right)^{\frac{1}{n+m}}.
\end{equation}
\end{lemma}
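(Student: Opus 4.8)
The plan is to collapse the two-index $\limsup$ to a one-index one, and then to prove the one-index statement, which expresses that replacing a non-negative sequence by its running maximum does not change the exponential growth rate seen through $(\,\cdot\,/d_k)^{1/k}$ for a suitable super-exponentially growing normalisation $(d_k)$.

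First I would note that, for each fixed $k\ge 2$, the quantity $n!\,m!$ over the finite set $\{n,m\ge1:\ n+m=k\}$ is minimised at the balanced split, so that $d_k\triangleq\min_{n+m=k,\,n,m\ge1}n!\,m!=\lceil k/2\rceil!\,\lfloor k/2\rfloor!$. Writing the $\limsup$ along the net $n+m\to\infty$ as $\inf_K\sup_{n+m\ge K}$ and splitting the inner supremum over the level sets $\{n+m=k\}$ gives
\begin{equation*}
\limsup_{\substack{n+m\to\infty\\ n,m\ge1}}\left(\frac{a_{n+m}}{n!m!}\right)^{\frac1{n+m}}=\limsup_{k\to\infty}\left(\frac{a_k}{d_k}\right)^{\frac1k},
\end{equation*}
and identically with $b_k$ in place of $a_k$. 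A one-line computation distinguishing $k$ even/odd shows $d_{k+1}/d_k=\lfloor k/2\rfloor+1$, so $d_{k+1}/d_k\to\infty$; and $d_k^{1/k}\to\infty$ (e.g.\ by Stirling, since $\log d_k\ge 2\log\lfloor k/2\rfloor!$). These are the only quantitative facts about $(d_k)$ that will be needed.

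It then remains to show $\limsup_k(b_k/d_k)^{1/k}=\limsup_k(a_k/d_k)^{1/k}$. The inequality ``$\ge$'' is immediate from $a_k\le b_k$. For ``$\le$'', write $L\triangleq\limsup_k(a_k/d_k)^{1/k}$; we may assume $L<\infty$, else both sides equal $+\infty$. Fix $\varepsilon>0$: by definition of $L$ there is $N$ with $a_j\le d_j(L+\varepsilon)^j$ for all $j\ge N$, while $a_j\le C\triangleq\max_{i<N}a_i$ for $j<N$. The sequence $u_i\triangleq d_i(L+\varepsilon)^i$ satisfies $u_{i+1}/u_i=(\lfloor i/2\rfloor+1)(L+\varepsilon)\to\infty$, hence is eventually nondecreasing, so for all large $k$ one has $\max_{N\le i\le k}u_i\le\max(C_0,\,d_k(L+\varepsilon)^k)$ for a constant $C_0$. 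Therefore $b_k=\max_{0\le i\le k}a_i\le\max\big(C_1,\,d_k(L+\varepsilon)^k\big)$ with $C_1\triangleq\max(C,C_0)$, so
\begin{equation*}
\left(\frac{b_k}{d_k}\right)^{\frac1k}\le\max\!\left(\frac{C_1^{1/k}}{d_k^{1/k}},\ L+\varepsilon\right)\xrightarrow{k\to\infty}L+\varepsilon,
\end{equation*}
using $d_k^{1/k}\to\infty$. Letting $\varepsilon\downarrow0$ yields $\limsup_k(b_k/d_k)^{1/k}\le L$, completing the argument.

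The step that must be handled with care is the reduction to a single index: one has to normalise by $d_k=\min_{n+m=k}n!m!$ (the balanced split), not by the naive choice $n=1,\ m=k-1$. If instead one bounded $a_j\le (j-1)!(L+\varepsilon)^j$ and then divided by a balanced $n!m!$, one would lose a factor exponentially large in $k$ (namely $\binom{n+m}{n}/(n+m)$), and the estimate would only give $\le 2(L+\varepsilon)$, which is useless. Once the correct normalisation is isolated, the remainder is routine real analysis.
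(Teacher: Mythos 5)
Your proof is correct, and it takes a genuinely different route from the paper's. You first collapse the double-index $\limsup$ to a single index by observing that on each level set $\{n+m=k\}$ the supremum is governed by $d_k\triangleq\min_{n+m=k,\,n,m\ge 1}n!m!=\lceil k/2\rceil!\,\lfloor k/2\rfloor!$ (the balanced split, since $n!(k-n)!$ is minimised where the binomial coefficient peaks), and then you prove the one-index statement by an elementary argument: since $d_{k+1}/d_k=\lfloor k/2\rfloor+1\to\infty$ and $d_k^{1/k}\to\infty$, the comparison sequence $u_i=d_i(L+\e)^i$ is eventually nondecreasing, so the running maximum $b_k$ is dominated by $\max(C_1,\,d_k(L+\e)^k)$ and the extra constant is killed by $d_k^{1/k}\to\infty$. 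The paper instead works directly with the two indices: it argues along non-decreasing index sequences, splits into the cases $(a_n)$ bounded (both limsups vanish, via Stirling) and unbounded (extracting the infinitely many indices where $a_n=b_n$), and then establishes the needed eventual monotonicity of $c\mapsto\bigl(c/(\Gamma(x+1)\Gamma(y+1))\bigr)^{1/(x+y)}$ in $(x,y)$, uniformly in $c$, by computing partial derivatives and invoking digamma/log-gamma asymptotics. Your reduction buys a shorter and more elementary proof (no gamma-function calculus, no case split on boundedness, no subsequence bookkeeping), at the price of the one observation you rightly flag as the crux: the normalisation must be the balanced-split minimum $d_k$, not an arbitrary representative of the level set. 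Both arguments prove the lemma in the form needed for Proposition~\ref{lemma: cov convergence}.
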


\begin{proof}
Without loss of generality we will show \eqref{eq:cvgence_equiv} along non-decreasing sequences of indices: let $(n_k,m_k)_{k\geq 0}$ be a sequence of tuples of natural numbers such that $n_k+m_k\to\infty$ and $(n_k)_{k\geq 0}$, $(m_k)_{k\geq 0}$ are non-decreasing.
It suffices to show \eqref{eq:cvgence_equiv} along this sequence. Clearly, $0\leq \text{LHS} \leq \text{RHS}$ in \eqref{eq:cvgence_equiv}.

First suppose $(a_n)_{n\geq 0}$ is bounded by $c>0$. Then so is $(b_n)_{n\geq 0}$. So by Stirling's formula for the asymptotic of the factorial
$$
\begin{aligned}
\left(\frac{b_{n_k+m_k}}{n_k!m_k!}\right)^{\frac{1}{n_k+m_k}}&\leq \left(\frac{c}{n_k!m_k!}\right)^{\frac{1}{n_k+m_k}} 
\sim \left(\frac{c2\pi e^{n_k+m_k}}{n_k^{n_k+1/2}m_k^{m_k+1/2}}\right)^{\frac{1}{n_k+m_k}} \xrightarrow{k \to \infty}0.
\end{aligned}
$$
Thus in this case $\text{LHS} = \text{RHS}=0$ in \eqref{eq:cvgence_equiv}.

Suppose now that $(a_n)_{n\geq 0}$ is unbounded. We can then extract the infinite subsequences $(a_{n_{k_j}})_{j\geq 0}$, $(b_{n_{k_j}})_{j\geq 0}$ such that $a_{n_{k_j}} = b_{n_{k_j}}$ for all $j$. Since
\begin{equation}
\begin{aligned}
\limsup_{j\to\infty}\left(\frac{a_{n_{k_j}+m_{k_j}}}{n_{k_j}!m_{k_j}!}\right)^{\frac{1}{n_{k_j}+m_{k_j}}}&\leq \limsup_{k\to\infty}\left(\frac{a_{n_k+m_k}}{n_k!m_k!}\right)^{\frac{1}{n_k+m_k}} \\
\rotatebox{90}{$=\;\;$}\hspace{2.3cm}& \hspace{1.9cm}\rotatebox{90}{$\geq$} \\
\limsup_{j\to\infty}\left(\frac{b_{n_{k_j}+m_{k_j}}}{n_{k_j}!m_{k_j}!}\right)^{\frac{1}{n_{k_j}+m_{k_j}}}&\leq \limsup_{k\to\infty}\left(\frac{b_{n_k+m_k}}{n_k!m_k!}\right)^{\frac{1}{n_k+m_k}}
\end{aligned}
\end{equation}

it suffices to show the bottom inequality is an equality to show equality throughout. For this, it suffices to show that for $j$ large enough, if $b_{n_{k_j}+m_{k_j}}= b_{n_k+m_k}$ with $k\geq k_j$, then
\begin{equation}\label{eq: inequality to show}
\left(\frac{b_{n_{k_j}+m_{k_j}}}{n_{k_j}!m_{k_j}!}\right)^{\frac{1}{n_{k_j}+m_{k_j}}} \geq \left(\frac{b_{n_{k_j}+m_{k_j}}}{n_k!m_k!}\right)^{\frac{1}{n_k+m_k}} =\left(\frac{b_{n_k+m_k}}{n_k!m_k!}\right)^{\frac{1}{n_k+m_k}}.
\end{equation}
For $c>0$, define the function $f_c:\R_{>0}^2\to \R_{>0}$ by
$$f_c(x,y) \triangleq  \left(\frac{c}{\Gamma(x+1)\Gamma(y+1)}\right)^{\frac{1}{x+y}},$$
where $\Gamma$ is the gamma function. For \eqref{eq: inequality to show} to hold for all $j$ large enough it suffices by the mean value theorem that there are $C,M>0$ such that for all $c\geq C$ and $x+y\geq M$ we have $\partial_xf_c(x,y), \partial_yf_c(x,y) \leq 0$. 

We proceed to show this. We have
\begin{equation}\label{eq: partial x of f}
\partial_x f_c(x,y) = \bigg(-\frac{\psi(x+1)}{x+y}+\frac{-\log(c)+\log(\Gamma(x+1))+\log(\Gamma(y+1))}{(x+y)^2}\bigg)\left(\frac{c}{\Gamma(x+1)\Gamma(y+1)}\right)^{\frac{1}{x+y}}
\end{equation}
and
\begin{equation}\label{eq: partial y of f}
\partial_y f_c(x,y) = \bigg(-\frac{\psi(y+1)}{x+y}+\frac{-\log(c)+\log(\Gamma(x+1))+\log(\Gamma(y+1))}{(x+y)^2}\bigg)\left(\frac{c}{\Gamma(x+1)\Gamma(y+1)}\right)^{\frac{1}{x+y}}
\end{equation}
where $\psi \triangleq  \Gamma'/\Gamma$ is the digamma function. Note that $\psi(x+1) \geq \log(x)$ for all $x> 0$, and $\log(\Gamma(x+1)) = x\log(x) - x+ o(x)$ as $x\to\infty$. From \eqref{eq: partial x of f} we get for $x>0$
\begin{equation*}
\begin{aligned}
\partial_xf_c(x,y) &\leq (-(x+y)\log(x)-\log(c)+x\log(x)-x+ o(x) +y\log(y) -y +o(y))g_c(x,y) \\
&= (-\log(c)-x-y+o(x+y))g_c(x,y)
\end{aligned}
\end{equation*}
for some positive function $g_c:\R^2_{>0} \to \R_{>0}$, where the error term $o(x+y)$ does not depend on $c$. Thus, fixing a $C>0$, we see that there is $M>0$ such that for all $c\geq C$ and $x+y\geq M$, $\partial_xf_c(x,y) \leq 0$. From \eqref{eq: partial y of f} we the case of $\partial_yf_c$ is analoguous, which concludes the proof.
\end{proof}

\end{document}